\newtheorem{theorem}{Theorem}[section]
\newtheorem{rem}[theorem]{Remark}
\newtheorem{lemma}[theorem]{Lemma}
\numberwithin{equation}{section}
\newcommand{\ov}{\overline}
\newcommand{\pa}{\partial}
\newcommand{\ep}{\varepsilon}
\newcommand{\tht}{\theta}
\newcommand{\pri}{\prime}
\newcommand{\gam}{\gamma}
\newcommand{\sig}{\sigma}
\newcommand{\kap}{\kappa}
\newcommand{\blds}{\boldsymbol}
\newcommand{\dis}{\displaystyle}
\newcommand{\abs}[1]{\left\vert#1\right\vert}
\newcommand{\BB}[1]{\ensuremath{\mathbb{#1}}}
\newcommand{\R}{\ensuremath{\BB{R}}}
\newcommand{\iny}{\ensuremath{\infty}}
\newcommand{\grad}{\ensuremath{\nabla}}
\DeclareMathOperator{\dv}{div} %
\DeclareMathOperator{\curl}{curl} %
\newcommand{\prt}{\ensuremath{\partial}}
\newcommand{\brac}[1]{\ensuremath{\left[ #1 \right]}}
\newcommand{\pr}[1]{\ensuremath{\left( #1 \right) }}
\newcommand{\set}[1]{\ensuremath{\left\{ #1 \right\}}}
\newcommand{\norm}[1]{\ensuremath{\left\Vert #1 \right\Vert}}
\newcommand{\smallnorm}[1]{\ensuremath{\Vert #1 \Vert}}
\newcommand{\refA}[1]{Appendix~\ref{A:#1}}
\newcommand{\refS}[1]{Section~\ref{S:#1}}
\newcommand{\refSAnd}[2]{Sections~\ref{S:#1} and \ref{S:#2}}
\newcommand{\refT}[1]{Theorem~\ref{T:#1}}
\newcommand{\refTAnd}[2]{Theorems~\ref{T:#1} and \ref{T:#2}}
\newcommand{\refTAndAnd}[3]{Theorems~\ref{T:#1}, \ref{T:#2}, and \ref{T:#3}}
\newcommand{\refL}[1]{Lemma~\ref{L:#1}}
\newcommand{\refD}[1]{Definition~\ref{D:#1}}
\newcommand{\refE}[1]{(\ref{e:#1})}
\newcommand{\refESub}[2]{$(\ref{e:#1})_{#2}$}
\newcommand{\refEAnd}[2]{(\ref{e:#1}, \ref{e:#2})}
\newcommand{\refEAndAnd}[3]{(\ref{e:#1}, \ref{e:#2}, \ref{e:#3})}
\newcommand{\refEThrough}[2]{(\ref{e:#1}) through (\ref{e:#2})}
\newcommand{\refEFirst}[1]{(\ref{e:#1}}
\newcommand{\refENext}[1]{, \ref{e:#1}}
\newcommand{\refELast}[1]{, \ref{e:#1})}
\newcommand{\refEFirstSub}[2]{($\ref{e:#1}_{#2}$}
\newcommand{\refELastSub}[2]{, $\ref{e:#1}_{#2}$)}
\newcommand{\refR}[1]{Remark~\ref{R:#1}}
\newcommand{\eps}{\ensuremath{\varepsilon}}
\newcommand{\Cal}[1]{\ensuremath{\mathcal{#1}}}
\newcommand{\al}{\ensuremath{\alpha}}
\newcommand{\pdx}[2]{\frac{\prt #1}{\prt #2}}
\newcommand{\ol}{\overline}
\newcommand{\Ignore}[1] {}
\theoremstyle{definition}
\newtheorem{definition}[theorem]{Definition}
\newtheorem{remark}[theorem]{Remark}
\newcommand{\spacer}{\vspace{2mm}}
\newcommand{\normmark}{\Vert}
\newcommand{\g}{\blds{g}}
\newcommand{\n}{\blds{n}}
\subjclass[2000]{35B25, 35C20, 76D05, 76D10}
\keywords{boundary layers, singular perturbations, Navier-Stokes equations, Euler equations, Navier friction boundary condition}
\begin{document}

\title[Boundary layer with Generalized Navier boundary conditions]
{Boundary layer analysis of the Navier-Stokes equations with generalized Navier boundary conditions}
\author[G. Gie and J. Kelliher]{Gung-Min Gie$^{1}$ and James P. Kelliher$^{1}$}
\address{$^1$ Department of Mathematics, University of California, Riverside, 900 University Ave., Riverside, CA 92521, U.S.A.}
%\address{$^2$ The Institute for Scientific Computing and Applied Mathematics, Indiana
%University, 831 East Third Street, Bloomington, Indiana 47405, U.S.A.}
\email{gungmin@ucr.edu}
\email{kelliher@math.ucr.edu}

\begin{abstract}
We study the weak boundary layer phenomenon of the Navier-Stokes equations with generalized Navier friction boundary conditions, $u\cdot \n = 0$, $\brac{\mathbf{S}(u)  \n}_{\text{tan}} + \mathcal{A} u= 0$,
 in a bounded domain in $\mathbb{R}^3$ when the viscosity, $\eps > 0$, is small. Here, $\Cal{A}$ is a type $(1, 1)$ tensor on the boundary: when $\Cal{A} = \al I$ we obtain Navier boundary conditions, and when $\Cal{A}$ is the shape operator we obtain the conditions, $u \cdot \blds{n} = (\curl u) \times \blds{n} = 0$. By constructing an explicit corrector, we prove the convergence, as $\eps$ tends to zero, of the Navier-Stokes solutions to the Euler solution. We do this both in the natural energy norm with a rate of order $\eps^{3/4}$ as well as uniformly in time and space with a rate of order $\eps^{3/8 - \delta}$ near the boundary and $\eps^{3/4 - \delta'}$ in the interior, where $\delta, \delta'$ decrease to 0 as the regularity of the initial velocity increases.
 This work simplifies an earlier work of Iftimie and Sueur, \cite{IS10}, as we use a simple and explicit corrector (which is more easily implemented in numerical applications).
 It also improves a result of Masmoudi and Rousset, \cite{MasmoudiRousset2010}, who obtain convergence uniformly in time and space via a method that does not yield a convergence rate.
\end{abstract}

\date{13 May 2011, updated 12 July 2011 (compiled on \today)}

\maketitle

\vspace{-0.25in}

\tableofcontents

%
% Section
%
\section{Introduction}

\noindent The flow of an incompressible, constant-density, constant-viscosity Newtonian fluid is described by the Navier-Stokes equations,
\begin{align}\label{e:0-1}
	\left\{
	\begin{array}{rl}
		\dfrac{\pa u^{\ep}}{\pa t} -\ep \Delta u^{\ep}
			+ (u^{\ep} \cdot \nabla)u^{\ep}
			+ \nabla p^{\ep} = f
				& \text{ in } \Omega \times(0,T), \\
		\text{div } u^{\ep} = 0
			& \text{ in } \Omega \times(0,T), \\
		%\spacer
			% \text{$u^{\ep}$ and $p^{\ep}$ are periodic in $x$ and $y$ directions with
			% periods $L_1$ and $L_2$},\\
		u^{\ep}|_{t=0} = u_{0}
			& \text{ in } \Omega.
	\end{array}
	\right.
\end{align}
The fluid is contained in the three-dimensional bounded domain, $\Omega$, with smooth boundary, $\Gamma$.
The parameter, $\ep > 0$ is the viscosity and $T > 0$ is fixed (see \refT{EulerWellPosedness}). The equations are to be solved for the velocity of the fluid, $u^\eps$, and pressure, $p^\eps$, given the forcing function, $f$, and initial velocity, $u_0$. The regularity of $\Gamma$, $f$, and $u_{0}$ we assume is specified in \refE{0-6}, but our emphasis is not on optimal regularity requirements.

When $\eps = 0$, we formally obtain the Euler equations,
\begin{align}\label{e:Euler}
	\left\{
	\begin{array}{rl}
		\dfrac{\pa u^0}{\pa t}  + (u^0 \cdot \nabla)u^0 + \nabla p^{0} = f
			& \text{ in } \Omega\times(0,T), \\
		\text{div }u^0 = 0
			& \text{ in } \Omega \times(0,T), \\
		% \spacer \text{$u^0$ and $p^{0}$ are periodic in $x$ and $y$ directions
			% with periods $L_1$ and $L_2$},\\
		u^{0}\cdot \n = 0
			& \text{ on } \Gamma \times (0,T), \\
		u^0|_{t=0} = u_{0}
			& \text{ in } \Omega,
	\end{array}
	\right.
\end{align}
where $\n$ is the outer unit normal vector on $\Gamma$.

For the Euler equations, we use the minimal, impermeable boundary conditions, $u^0 \cdot \n = 0$, reflecting no entry or exit of fluid from the domain; being a first-order equation, these conditions suffice.
No-slip boundary conditions, $u^\eps = 0$ on $\Gamma$, are those most often prescribed for the Navier-Stokes equations. This, of necessity, leads to a discrepancy between $u^\eps$ and $u^0$ at the boundary, resulting in boundary layer effects. Prandtl \cite{P1905} was the first to make real progress on analyzing these effects, and much of a pragmatic nature has been discovered, but to this day the mathematical understanding is woefully inadequate.

In part because of these difficulties with no-slip boundary conditions, and in part because of very real physical applications, researchers have turned to other boundary conditions. Of particular interest are boundary conditions variously called Navier friction, Navier slip, or simply Navier boundary conditions (other names have been used as well).  These boundary conditions can be written as
\begin{equation}\label{e:0-2}
			u^{\ep}\cdot\n = 0, \;
		\brac{\mathbf{S}(u^{\ep})  \n
			+ \alpha u^{\ep}}_{\text{tan}} = 0 \text{ on } \Gamma,
\end{equation}
where
\begin{equation}\label{e:0-3}
	\mathbf{S}(u)
		: = \dfrac{1}{2} \big( \nabla u + (\nabla u)^{\intercal}  \big)
		= \Big( \dfrac{1}{2}  \dfrac{\pa u_j}{\pa x_i}
			+  \dfrac{1}{2} \dfrac{\pa u_i }{\pa x_j}  \Big)_{1 \leq i,j \leq 3},
			\text{ for } u = (u_1, u_2, u_3).
\end{equation}
Here $(x_1, x_2, x_3)$, (or $(x,y,z)$ in \refS{S-2}), denotes the Cartesian
coordinates of a point $\blds{x} \in \mathbb{R}^3$, $\alpha$ is the (positive or negative) friction coefficient, which is independent of $\ep$. The notation $\brac{\cdot}_{\text{tan}}$ in \refE{0-2} denotes the tangential components of a vector on $\Gamma$.

In this paper, we use the generalization of \refE{0-2},
\begin{equation}\label{e:General_NBC}
    \left\{
    \begin{array}{l}
            \spacer
            u^{\ep}\cdot \n = 0 \text{ on } \Gamma, \\
            \brac{\mathbf{S}(u^{\ep}) \n}_{\text{tan}}  + \mathcal{A} \, u^{\ep} = 0 \text{ on } \Gamma,
    \end{array}
    \right.
\end{equation}
of the Navier boundary conditions. Here, $\Cal{A}$ is a type $(1, 1)$ tensor on the boundary having at least $C^2$-regularity. In coordinates on the boundary, $\mathcal{A}$ can be written in matrix form as
$
            \mathcal{A} = \big(\alpha_{ij} \big)_{1 \leq ij \leq 2}.
$
Note that $u^\eps$ lies in the tangent plane, as does $\Cal{A} u^\eps$.

It is easy to see that when $\mathcal{A} = \al I$, the product of a function $\alpha$ on $\Gamma$ and the identity tensor, the generalized Navier boundary conditions, \refE{General_NBC}, reduce to the usual Navier friction boundary conditions, \refE{0-2}. In fact, the analysis using a general $\Cal{A}$ in place of $\al I$ is changed only slightly from using $\al I$ with $\al$ a constant (we say a bit more on this in \refR{SimpleExtension}).

The primary motivation for generalizing Navier boundary conditions in this manner is that when $\mathcal{A}$ is the shape operator (Weingarten map) on $\Gamma$, one obtains, as a special case, the boundary conditions,
\begin{align}\label{e:LionsLikeBCs}
	u^\ep \cdot \blds{n} = (\curl u^\ep) \times \blds{n} = 0,
\end{align}
as we show in \refA{LionsLikeBCs}. (This fact is implicit in \cite{daViegaCrispo2011A}.) Such boundary conditions have been studied (in 3D) by several authors, including \cite{daViegaCrispo2010, daViegaCrispo2011A, XiaoXin2007} (and see the references therein), \cite{BelloutNeustupa1, BelloutNeustupa2} for an inhomogeneous version of \refE{LionsLikeBCs}, and \cite{BelloutNeustupa3, BelloutNeustupa4} for related boundary conditions. In this special case, stronger convergence can be obtained (at least in a channel), in large part because vorticity can be controlled near the boundary. Hence, somewhat different issues arise, and the bodies of literature studying boundary conditions \refE{LionsLikeBCs} and \refE{0-2} are somewhat disjoint.

% as is the case for the corresponding 2D conditions, $u^\eps \cdot \n = \text{scalar curl of } u^\eps = 0$, first employed by Yudovich in \cite{Y1964},

We  introduce the Hilbert space,
\begin{align*} % \label{e:H}
	H = \set{u \in L^2(\Omega)^3 \colon \text{ div }u =0 \text{ and } u \cdot \n = 0 \text{ on } \Gamma},
\end{align*}
equipped with the usual $L^2$ inner product. Then, letting $T>0$ be an arbitrary time less than any $T$ appearing in \refTAnd{EulerWellPosedness}{MR}, we state our main result:
\begin{theorem}\label{T:1}
Let $\Gamma_a$ be the interior tubular neighborhood of $\Omega$ with width $a > 0$. Assume that
\begin{align}\label{e:0-6}
	u_0 \in H \cap H^{m}(\Omega),
		\; f \in C^\iny_{loc}([0, \iny); C^\iny(\Omega)),
		\; \Gamma \text{ is } C^{m+2}
		\text{ for } m \ge 5.
\end{align}
Then $u^\eps$, a solution of the Navier-Stokes equations, \refE{0-1}, with Generalized Navier boundary conditions, \refE{General_NBC}, converges to $u^0$, the solution of the Euler equations, \refE{Euler}, as the viscous parameter $\ep$ tends to zero, in the sense that
\begin{align}\label{e:68}
	\norm{u^{\ep}  -  u^{0}}_{L^{\infty}(0, T; L^{2}(\Omega))}
		\le \kap \ep^{\frac{3}{4}},
	\quad
	\norm{u^{\ep} - u^{0}}_{L^{2}(0, T ; H^{1}(\Omega))}
		\leq\kap \ep^{\frac{1}{4}},
\end{align}
for some $T > 0$ and for a constant $\kap = \kap (T, \ov{\alpha}, u_0, f)$, $\ol{\al} = \norm{\Cal{A}}_{C^m(\Gamma)}$. If $m > 6$ and $f \equiv 0$ then
	\begin{align}\label{e:uniform}
		\begin{split}
		\norm{u^\eps - u^0}_{L^\iny([0, T] \times \Gamma_a)}
			&\le \kappa
				\eps^{\frac{3}{8} - \frac{3}{8(m - 1)}},
		\quad
		\norm{u^\eps - u^0}_{L^\iny([0, T] \times \Omega \setminus \Gamma_a)}
			\le \kappa
				\eps^{\frac{3}{4} - \frac{9}{8m}},
		\end{split}
	\end{align}
where now $\kap = \kap(T, \ov{\alpha}, m, a, u_0, f)$.
\end{theorem}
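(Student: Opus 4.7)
The plan is to construct an explicit tangential boundary-layer corrector $\theta^\eps$ that absorbs the mismatch in the generalized Navier boundary condition between $u^0$ and $u^\eps$, control the remainder $w^\eps := u^\eps - u^0 - \theta^\eps$ by an $L^2$ energy estimate, and then obtain \refE{uniform} by interpolating the resulting $L^2$ bound against higher-order Sobolev estimates on $u^\eps$. In a tubular neighborhood of $\Gamma$ introduce boundary-fitted coordinates $(\xi_1, \xi_2, \xi_3)$ with $\xi_3 = \dist(\cdot, \Gamma)$, fix a cut-off $\chi(\xi_3)$ localizing to this neighborhood, and choose a smooth profile $\sigma(z)$ with $\sigma(0) = 0$, $\sigma'(0) = 1$, and rapid exponential decay of $\sigma$ and its derivatives. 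Define the tangent field on $\Gamma$ by
\[
    \g = -2\brac{\mathbf{S}(u^0)\,\n}_{\text{tan}} - 2\,\Cal{A}\, u^0\Big|_{\Gamma},
\]
extend it smoothly into $\Omega$, and set
\[
    \theta^\eps(\blds{x}, t) = \sqrt{\eps}\, \chi(\xi_3)\, \sigma\!\Big(\tfrac{\xi_3}{\sqrt{\eps}}\Big)\, \g(\xi_1, \xi_2, t).
\]
The $\sqrt{\eps}$ prefactor is chosen so that $\pa_n \theta^\eps|_\Gamma = \g$, whence $[\mathbf{S}(u^0 + \theta^\eps)\n]_{\text{tan}} + \Cal{A}(u^0 + \theta^\eps) = O(\sqrt{\eps})$ on $\Gamma$; a small normal correction $\phi^\eps$ of pointwise order $\eps$ and vanishing at $\Gamma$ is then added to make the full corrector divergence-free without spoiling impermeability. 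The resulting (redenoted) $\theta^\eps$ satisfies, by the $\sqrt{\eps}$-thickness of the layer and the exponential decay of $\sigma$,
\[
    \|\theta^\eps\|_{L^\iny(0,T;L^2)} = O(\eps^{3/4}),
    \quad
    \|\na \theta^\eps\|_{L^\iny(0,T;L^2)} = O(\eps^{1/4}),
    \quad
    \|\theta^\eps\|_{L^\iny} = O(\eps^{1/2}),
\]
together with $\|\pa_t\theta^\eps\|_{L^2_t L^2_x} = O(\eps^{3/4})$ and $\|\eps\Delta\theta^\eps\|_{L^2_t L^2_x} = O(\eps^{3/4})$.

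\textbf{Remainder equation and $L^2$ estimate.} Subtracting \refE{Euler} and the equation satisfied by $\theta^\eps$ from \refE{0-1}, $w^\eps$ is divergence-free, satisfies the Navier-type condition up to an $O(\sqrt{\eps})$ tangential residual, and solves
\[
    \pa_t w^\eps - \eps\Delta w^\eps + (u^\eps\cdot\na)w^\eps + (w^\eps\cdot\na)(u^0 + \theta^\eps) + \na q^\eps = R^\eps,
\]
where $R^\eps$ collects $\eps\Delta u^0$, $-\pa_t\theta^\eps + \eps\Delta\theta^\eps$, and the quadratic interactions of $\theta^\eps$ with itself and with $u^0$. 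Pairing with $w^\eps$ in $L^2(\Omega)$ and using a Korn identity together with the Navier-type boundary condition converts the viscous contribution to $2\eps\|\mathbf{S}(w^\eps)\|_{L^2}^2$ plus a boundary remainder controlled by the trace theorem; meanwhile $(u^\eps\cdot\na)w^\eps\cdot w^\eps$ vanishes, $(w^\eps\cdot\na)u^0\cdot w^\eps$ is bounded by $C\|w^\eps\|_{L^2}^2$, and $|\langle R^\eps, w^\eps\rangle| \le \tfrac{\eps}{2}\|\na w^\eps\|_{L^2}^2 + C\eps^{3/2}$. Gronwall then yields $\|w^\eps\|_{L^\iny_t L^2_x}^2 + \eps\|\na w^\eps\|_{L^2_t L^2_x}^2 \le C\eps^{3/2}$, and \refE{68} follows by the triangle inequality and the corrector bounds.

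\textbf{Uniform bounds.} Under the stronger hypotheses of \refE{uniform}, propagate higher-order energies $\|u^\eps\|_{L^\iny_t H^s_x} \le C\eps^{-\beta_s}$ for $s \le m$ uniformly in $\eps$ using that the generalized Navier condition is preserved under the flow, together with $\|u^0\|_{L^\iny_t H^m_x} \le C$. Away from $\Gamma_a$ the corrector is $O(\eps^N)$ in every norm by exponential decay, so $u^\eps - u^0 = w^\eps + O(\eps^N)$ there; the 3D Sobolev interpolation $\|v\|_{L^\iny} \le C\|v\|_{L^2}^{1 - 3/(2s)} \|v\|_{H^s}^{3/(2s)}$ applied at $s \sim m$ yields the interior rate $\eps^{3/4 - 9/(8m)}$. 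Inside $\Gamma_a$, combine $\|\theta^\eps\|_{L^\iny} = O(\eps^{1/2})$ with the same interpolation, now with $s$ limited to $s \sim m - 1$ to reflect the cost of one derivative spent on the boundary-layer scaling, to obtain the near-boundary rate $\eps^{3/8 - 3/(8(m-1))}$.

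\textbf{Main obstacle.} The delicate point is controlling $(w^\eps\cdot\na)\theta^\eps\cdot w^\eps$ in the energy estimate, since $\|\na\theta^\eps\|_{L^2} = O(\eps^{1/4})$ is large because of the $\sqrt{\eps}$-concentrated normal derivative. The crucial cancellation is that $w^\eps\cdot\n = 0$ on $\Gamma$, so by a Hardy-type inequality the normal component of $w^\eps$ is of size $\xi_3$ near $\Gamma$; pairing this factor against the layer profile of $\pa_n\theta^\eps$ (supported in $\xi_3 \lesssim \sqrt{\eps}$) recovers an extra $\sqrt{\eps}$, which makes the dangerous term absorbable into the viscous dissipation and closes the estimate.
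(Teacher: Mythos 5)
Your construction of the corrector and the $L^2$ energy estimate follow essentially the same route as the paper (amplitude-$\sqrt{\eps}$ tangential layer matching the Neumann data $-2\brac{\mathbf{S}(u^0)\n}_{\text{tan}}-2\Cal{A}u^0$, an order-$\eps$ normal component restoring $\dv\tht^{\ep}=0$, Korn plus trace plus Gronwall), but you have misplaced the genuinely delicate term. The term $(w^{\ep}\cdot\nabla)\tht^{\ep}\cdot w^{\ep}$ that you single out is harmless: $\norm{\nabla\tht^{\ep}}_{L^\iny}=O(1)$ uniformly in $\eps$, so it is bounded by $C\norm{w^{\ep}}_{L^2}^2$ and absorbed by Gronwall with no Hardy argument. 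The term that actually threatens the rate is $(u^0\cdot\nabla)\tht^{\ep}\cdot w^{\ep}$, which you bury in $R^{\ep}$ and claim contributes $O(\eps^{3/2})$. Naively $\norm{u^0_3\,\pa_{\xi_3}\tht^{\ep}_i}_{L^2}=O(\eps^{1/4})$, which after Young and Gronwall only yields $\norm{w^{\ep}}_{L^\iny_tL^2_x}=O(\eps^{1/4})$, destroying \refE{68}. The fix is exactly the mechanism you describe but applied to $u^0$ rather than $w^{\ep}$: since $u^0\cdot\n=0$ on $\Gamma$, one has $u^0_3/\xi_3\in L^\iny$ near $\Gamma$, and the weighted corrector bound $\normmark(\xi_3/\sqrt{\eps})\,\pa_{\xi_3}\tht^{\ep}_i\normmark_{L^2}=O(\eps^{1/4})$ recovers the extra $\sqrt{\eps}$, giving $O(\eps^{3/4})\norm{w^{\ep}}_{L^2}$. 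As written, your estimate of $\langle R^{\ep},w^{\ep}\rangle$ is unjustified at precisely this point.

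The uniform convergence \refE{uniform} is where the proposal genuinely breaks down. You propose to propagate bounds $\norm{u^{\ep}}_{L^\iny_tH^s_x}\le C\eps^{-\beta_s}$ and then apply the isotropic interpolation $\norm{v}_{L^\iny}\le C\norm{v}_{L^2}^{1-3/(2s)}\norm{v}_{H^s}^{3/(2s)}$. If $\beta_s=0$ this is false near the boundary: the whole point of the layer is that second and higher normal derivatives of $u^{\ep}$ are not uniformly bounded in $\eps$ (even the Masmoudi--Rousset estimate only controls $\eps\int_0^T\smallnorm{\nabla^2u^{\ep}}^2_{H^{m-1}_{co}}$); if $\beta_s>0$ you never quantify it, and the exponents in \refE{uniform} do not follow. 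Moreover, your heuristic ``$s\sim m-1$ to reflect one derivative spent on the layer'' does not even reproduce the stated rate: isotropic interpolation at $s=m-1$ would give $\eps^{3/4-9/(8(m-1))}$, not $\eps^{3/8-3/(8(m-1))}$; the smaller exponent $3/8$ is the signature of an anisotropic inequality in which only half a power of the small $L^2$ norm is available because the other half is paired with a single full gradient. What is actually needed (and what the paper uses) is the uniform conormal a priori bound of Masmoudi--Rousset, $\sup_{[0,T]}\big(\norm{u^{\ep}}_{H^m_{co}}+\norm{\nabla u^{\ep}}_{H^{m-1}_{co}}\big)\le C$, combined with an anisotropic Agmon-type inequality set in $H^m_{co}$ that uses only tangential (conormal) derivatives plus one normal derivative near $\Gamma$, and which away from $\Gamma_a$ reduces to the isotropic statement because conormal and full derivatives are comparable there. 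Without some substitute for these two ingredients, your interior rate $\eps^{3/4-9/(8m)}$ and boundary rate $\eps^{3/8-3/(8(m-1))}$ cannot be derived.
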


	Because we will only have existence of $u^\eps$ when
	$5 \le m \le 6$ (\refT{NSExistence}), by $u^\eps$ we mean
	an arbitrary choice of the possibly multiple solutions when we consider the limit
	as $\eps \to 0$. When $m > 6$
	the solutions are unique
	as shown by Masmoudi and Rousset (see \refT{MR}), and
	this arbitrary choice becomes unnecessary.

\begin{remark}
	Standard boundary layer analysis indicates that a linear corrector will be of order $\eps^{1/2}$ in $L^\iny([0, T] \times \Omega)$,
	so an exponent of $\frac{1}{2}$ rather than $\frac{3}{8}$ in \refE{uniform} should be considered
	optimal (for $C^\iny$ initial data).
\end{remark}

Navier boundary conditions go back to \cite{N1827}, in which Navier first proposed them, and to \cite{M1879}, in which Maxwell derived them from the kinetic theory of gases. There has been intermittent interest in them since, but revival of active interest in the mathematical community working on the vanishing viscosity limit started with the paper of Clopeau, Mikeli{\'c}, and Robert \cite{CMR}, which gives a vanishing viscosity result in two dimensions. Also, the work of J-M Coron in \cite{Coron1995} on the controllability of the 2D Navier-Stokes equations with Navier boundary conditions, which precedes \cite{CMR}, initiated interest in these boundary conditions in the PDE control theory community. By now there is a fairly substantial mathematical literature on the subject,
% (see \cite{IS10} for references),
but the three papers, \cite{IP06, IS10, MasmoudiRousset2010}, are of particular concern to us here.
% (We mention in passing, however, \cite{WangWangXin2010}, which sets $\al = \eps^\gamma$, demonstrating the sharp qualitative differences between the cases $\gamma < \frac{1}{2}$, $\gamma = \frac{1}{2}$, and $\gamma > \frac{1}{2}$.)
Both \cite{IP06} and \cite{MasmoudiRousset2010} give existence theorems for solutions to \refEAnd{0-1}{0-2}, with uniqueness holding for stronger initial data. We quote these results in \refTAnd{NSExistence}{MR}.

Even with Navier boundary conditions there is a discrepancy between $u^0$ and $u^\eps$ on the boundary, so we expect boundary layer effects to occur. As first shown (in 3D) by Iftimie and Planas in \cite{IP06}, however, this boundary layer effect is mild enough to allow convergence of $u^{\ep}$ to $u^{0}$ in $L^{\infty}(0,T; L^2{(\Omega)})$ without using any artificial function correcting the difference $u^{\ep} - u^0$ on the boundary. (This result of \cite{IP06} was for $\al \ge 0$, but the argument is easily modified to allow $\al$ to be negative.)  Thus, it makes sense to refer to the boundary layer as weak.

Specifically, Iftimie and Planas show in \cite{IP06} that
\begin{equation}\label{e:c9}
	\normmark  u^{\ep} - u^0 \normmark _{L^{\infty}(0,T; L^2(\Omega))}
		\leq
		% \kap(T, \alpha, u^0)
		C \ep^{\frac{1}{2}}.
\end{equation}
Iftimie and Sueur \cite{IS10} use a corrector, $\widetilde{v}$, to improve the convergence rate in \refE{c9} to $\eps^{3/4}$ in this energy norm.
More precisely, they consider an asymptotic expansion of $u^{\ep}$ as the sum of $u^0$ and $\widetilde{v}$, where $\widetilde{v}$ is a corrector whose main part of its tangential components is defined as a solution of a linearized Prandtl-type equation. Using the properties of $\widetilde{v}$, they show that
\begin{align*} % \label{e:IS1}
u^{\ep} - (u^0 + \widetilde{v}) \text{ is order } \ep \text{ in } L^{\infty}(0,T; L^2(\Omega)),
	\text{ order } \ep^{\frac{1}{2}} \text{ in } L^{2}(0,T; H^1(\Omega)).
\end{align*}
These bounds with estimates on the corrector $\widetilde{v}$ then give
\begin{align}\label{e:uu0Conv}
	\norm{u^{\ep}  -  u^{0}}_{L^{\infty}(0, T; L^{2}(\Omega))}
		\le C \ep^{\frac{3}{4}},
	\quad
	\norm{u^{\ep} - u^{0}}_{L^{2}(0, T ; H^{1}(\Omega))}
		\le C \ep^{\frac{1}{4}}.
\end{align}

We, on the other hand, use an asymptotic expansion of $u^{\ep}$ in the form
$
u^{\ep} \simeq u^0 + \tht^{\ep},
$
where the main part of the explicitly defined corrector, $\tht^{\ep}$, exponentially decays from the boundary; see \refEAnd{24}{25} and \refEAnd{226}{227}. Because our corrector is so simple and explicit it can be mored readily used in numerical applications than that of \cite{IS10}, which requires the solution of a coupled system of linear equations. Both correctors are linear and both can be used to obtain order $\eps^{3/4}$ convergence in the vanishing viscosity limit, \refE{68}, but Iftimie and Sueur achieve an order of convergence of $\eps$ for the \textit{corrected} difference, $u^\eps - u^0 - \widetilde{v}$, while our corrected difference still gives order $\eps^{3/4}$. The tradeoff is simplicity of the corrector versus rate of convergence of the corrected velocity.

\Ignore{ % Different phrasing of the above paragraph
By using a simpler corrector, we lose some accuracy, with respect to $\ep$, in the error analysis. That is, replacing $ \widetilde{v}$ by $\tht^{\ep}$, the estimates \refE{IS1} get worse by $\ep^{1/4}$. However, the main convergence results \refE{uu0Conv} still hold true because the size of $\tht^{\ep}$ with respect to $\ep$ in certain key Sobolev spaces is the same as that of $\widetilde{v}$. In this sense, our simple corrector function explains the  essential behavior near the boundary of the flow governed by the Navier-Stokes equations with the Navier friction boundary conditions. Furthermore, thanks to its simple and explicit expression, $\tht^{\ep}$ can be mored readily used in numerical applications related to this problem.
} % End Ignore

We wish to emphasize that the techniques we employ in this paper differ considerably from those of \cite{IS10}. While the approach in both papers originates in the work of Prandtl \cite{P1905}, our approach adheres much more closely to a by now well-established approach to boundary layer analysis, which we adapt to treat Navier boundary conditions. In this regard our arguments will be more familiar to many researchers, and hence, ultimately, we believe, easier to % understand and so
incorporate into the existing understanding of boundary layers as they appear in a variety of physical problems.
(For a description of the general theory of boundary layer analysis, see, for example, \cite{EW2000, Eck72, Gie1, GHT1, Hol95, Lion73, O'M77, ShK87, VL62}. Concerning boundary layer analysis related to the Navier-Stokes equations, we refer readers to \cite{GHT2, Gre98, HT1, HT2, IS10, KTW2010, TW1996OseenIUMJ, TW02-1, TW95-1, TW97-1, TW1998}.)

A key aspect of our corrector is that it is coordinate-independent. This not only gives it geometric meaning, it removes the need for a partition of unity to patch together the corrector defined in charts throughout the boundary layer. Nonetheless, the corrector has a particularly simple form in principal curvature coordinates, which we discuss in \refS{Intrinsic}. (Such coordinates are used in much the same way, though for different purposes, in \cite{daViegaCrispo2011A}.)

We also, in \refE{uniform}, obtain convergence uniformly in time and space of order $\eps^{3/8 - \delta}$ near the boundary and $\eps^{3/4 - \delta'}$ in the interior, with $\delta, \delta'$ decreasing as the regularity of the initial velocity is increased, by employing an anisotropic embedding inequality developed in \refS{Agmons}. We take great advantage of the regularity result of Masmoudi and Rousset \cite{MasmoudiRousset2010} (\refT{MR}) to obtain this convergence. The authors of \cite{MasmoudiRousset2010} themselves take a similar approach; however, the anisotropic inequality they use requires control on norms higher than those in \refE{uu0Conv}, and this is only sufficient to obtain boundedness of the sequence of solutions to \refEAnd{0-1}{0-2}. A compactness argument then gives
%\begin{equation*} % \label{e:MR6}
%\lim_{\ep \rightarrow 0} \Big\{ \normmark u^{\ep}  -  u^{0} \normmark _{L^{\infty}(0, T; L^{2}(\Omega)^3)} + %\normmark u^{\ep}  -  u^{0} \normmark _{L^{\infty}([0, T] \times \Omega)} \Big\} = 0.
%\end{equation*}
convergence uniformly in time and space, though without a rate of convergence.

The body of this paper is organized as follows: The existence and uniqueness results for solutions to the Navier-Stokes equations and Euler equations that we will need are given in \refS{ExistUnique}. We give the proof of \refE{68}, the first part of \refT{1}, in \refSAnd{S-2}{S-3}. To avoid the geometrical difficulties of a curved boundary, which obscure the key ingredients of the argument, we first prove \refE{68} for the case of a three-dimensional periodic channel domain. We do this in \refS{S-2}. Then, in \refS{S-3}, as a generalization of \refS{S-2}, we treat the case of a bounded domain in $\mathbb{R}^3$ with smooth (curved) boundary. In \refS{S-4}, we present the (very short) proof of \refE{uniform}, the second part of \refT{1}, which relies on the anisotropic Agmon's inequality, which we establish in \refT{AnistropicAgmonsInequality}.
In \refA{LionsLikeBCs}, we prove that \refE{General_NBC} reduces to \refE{LionsLikeBCs} when $\Cal{A}$ is the shape operator. Finally, \refS{SomeLemmas} contains some standard lemmas which we state without proof.

\Ignore{ % Ignore vanishing viscosity result of Iftimie and Planas extended to allow
	     % alpha to be negative.
%
% Section
%
\section{The first convergence result}\label{S:S-0}

\noindent In a earlier work \cite{IP06}, using a simple a priori estimate argument, the authors prove the convergence, in $L^{\infty}(0,T; L^2(\Omega)^3)$, of $u^{\ep}$, solution of \refE{0-1} with \refE{0-2} to $u^0$, solution of \refE{Euler}. Here we prove the same result in a slightly modified way as we deal with a more general case when the friction coefficient $\alpha$ is either positive or negative, independent of $\ep$: We set
\begin{equation}\label{e:c1}
w = u^{\ep} - u^0.
\end{equation}
%where $u^{\ep}$ is a solution of the Navier-Stokes equations with the Navier boundary condition, and $u^{0}$ is the solution of the Euler system.\\
Then, using \refEAndAnd{0-1}{0-2}{Euler}, the equations for $w$ read
\begin{equation}\label{e:c2}
\left\{\begin{array}{l}
\spacer \dfrac{\pa w}{\pa t} -\ep \Delta w  + \nabla (p^{\ep} - p^0) =
\ep \Delta  u^{0}  - (u^{\ep} \cdot \nabla)w -  (w \cdot \nabla)u^{0}, \text{ in } \Omega \times(0,T) ,\\
\spacer \text{div } w = 0, \text{ in } \Omega \times(0,T),\\
\spacer w\cdot\n = 0 \text{ on } \Gamma,\\
\spacer  \big( \mathbf{S}(w)  \n + \alpha w \big)_{\text{tan}} = \big( \mathbf{S}(u^{0})  \n + \alpha u^{0} \big)_{\text{tan}}  \text{ on } \Gamma,\\
w\normmark _{t=0} = 0, \text{ in } \Omega.
\end{array}
\right.
\end{equation}

We multiply the equation \refE{c2}$_1$ by $w$, and integrate it over $\Omega$m giving
\begin{equation}\label{e:c3}
\dfrac{1}{2} \dfrac{d}{dt}|w|_{L^{2}(\Omega)}^2 - \ep \int_{\Omega} \Delta w \cdot w \, \, d\Omega
             = \ep \int_{\Omega} \Delta  u^0 \cdot w \, d\Omega - \int_{\Omega} (w \cdot \nabla)u^0 \cdot w \, d\Omega.
\end{equation}
For the second term on the left-hand side of \refE{c3}, using \refL{1}, we find
\begin{equation}\label{e:c4}
- \ep \int_{\Omega} \Delta w \cdot w \, d\Omega
                     = 2\ep | \mathbf{S}(w)|_{L^{2}(\Omega)}^2
                         - 2 \ep \int_{0} \big( \mathbf{S}(u^{0})  \n + \alpha u^{0} - \alpha w \big)_{\text{tan}} \cdot (w)_{\text{tan}} \, dS.
\end{equation}
Thanks to the Korn inequality, we see that
\begin{equation}\label{e:45}
|\mathbf{S}(w)|_{L^2(\Omega)}^2
\geq \kap_{\mathbf{S}}\big\{ |\nabla w|_{L^2(\Omega)}^2 + | w|_{L^2(\Omega)}^2 \big\}
\geq \kap_{\mathbf{S}}|\nabla w|_{L^2(\Omega)}^2,
\end{equation}
for a constant $\kap_{\mathbf{S}}$, depending on the domain, but independent of $\ep$ or $\alpha$.\\
The second term on the right-hand side of \refE{c3} is easy to estimate:
\begin{equation}\label{e:c5}
\Big| \int_{\Omega} (w \cdot \nabla)u^0 \cdot w \, d\Omega \Big| \leq \kap \normmark \nabla u^0\normmark _{L^{\infty}(\Omega)}|w|_{L^2(\Omega)}^2,
\end{equation}
for a constant $\kap$ depending on the data, but independent of $\ep$ or $\alpha$, which may vary in different occurrences.
Then \refEThrough{c3}{c5} yields
\begin{equation}\label{e:c6}\begin{array}{l}
\spacer \dfrac{1}{2} \dfrac{d}{dt}|w|_{L^{2}(\Omega)}^2 + 2\kap_{\mathbf{S}} \ep |\nabla w|_{L^2(\Omega)}^2 \\
                             \spacer \hspace{10mm} \leq \dfrac{1}{4} \ep^2  | \Delta  u^0 |_{L^{2}(\Omega)}^2 + \kap\big( 1 + |\nabla u^0|_{L^{\infty}(\Omega)} \big) |w|_{L^{2}(\Omega)}^2\\
                             \hspace{14mm} + 2\ep \Big\{ \normmark  (\mathbf{S}(u^0)\n)_{\text{tan}} \normmark _{L^2(\Gamma)} + |\alpha|  \normmark (u^0)_{\text{tan}}\normmark _{L^2(\Gamma)} +|\alpha| \normmark w\normmark _{L^2(\Gamma)} \Big\} |w|_{L^2(\Gamma)}.
\end{array}
\end{equation}
Using \refE{0-3}, the trace theorem and \refL{trace-like}, we estimate the last term on the right-hand side of \refE{c6}:
\begin{equation}\label{e:c7}\begin{array}{l}
\spacer 2\ep \Big\{ \normmark  (\mathbf{S}(u^0)\n)_{\text{tan}} \normmark _{L^2(\Gamma)} + |\alpha|  \normmark (u^0)_{\text{tan}}\normmark _{L^2(\Gamma)} +|\alpha| \normmark w\normmark _{L^2(\Gamma)} \Big\} |w|_{L^2(\Gamma)}\\
\spacer \hspace{5mm } \leq \kap \ep \Big\{ (1 + |\alpha|) \normmark u^0\normmark _{H^2(\Omega)} + |\alpha|  \normmark w\normmark _{L^2(\Omega)}^{\frac{1}{2}} \normmark \nabla w\normmark _{L^2(\Omega)}^{\frac{1}{2}}  \Big\} \normmark w\normmark _{L^2(\Omega)}^{\frac{1}{2}} \normmark \nabla w\normmark _{L^2(\Omega)}^{\frac{1}{2}} \\
\spacer \hspace{5mm } \leq (\text{using the Poincar\'{e} inequality})\\
\spacer \hspace{5mm } \leq  \kap (1 + |\alpha|)  \ep  \normmark u^0\normmark _{H^2(\Omega)} \normmark \nabla w\normmark _{L^2(\Omega)} +  \kap |\alpha|\ep \normmark w\normmark _{L^2(\Omega)} \normmark \nabla w\normmark _{L^2(\Omega)}\\
\spacer \hspace{5mm } \leq (\text{using Young's inequality})\\
\hspace{5mm } \leq \kap_{\mathbf{S}} \ep  \normmark \nabla w\normmark _{L^2(\Omega)}^2 + \kap (1 + \alpha^2) \ep  \normmark u^0\normmark _{H^2(\Omega)}^2 + \kap \alpha^2 \ep  \normmark w\normmark _{L^2(\Omega)}^2.
\end{array}
\end{equation}
Combining \refE{c6} and \refE{c7}, we obtain that
\begin{equation}\label{e:c8}
\begin{array}{l}
\spacer \dfrac{1}{2} \dfrac{d}{dt}|w|_{L^{2}(\Omega)}^2 + \kap_{\mathbf{S}} \ep |\nabla w|_{L^2(\Omega)}^2 \\
                             \spacer \hspace{10mm} \leq  \kap (1 + \alpha^2) \ep \normmark u^0\normmark _{H^2(\Omega)}^2 + \kap \big(1+ \alpha^2  + |\nabla u^0|_{L^{\infty}(\Omega)} \big) |w|_{L^2(\Omega)}^2.
\end{array}
\end{equation}
Thanks to the Gronwall inequality, from \refEAnd{c1}{c8}, we finally deduce the convergence of $u^{\ep}$ to $u^0$ in the sense that
\begin{equation}\label{e:c9}
\normmark  u^{\ep} - u^0  \normmark _{L^{\infty}(0,T; L^2(\Omega)^3)} \leq \kap(T, \alpha, u_0) \ep^{\frac{1}{2}},
\end{equation}
for a constant $\kap(T, \alpha, u_0)$, depending on $T$, $\alpha$ and $u^0$, but independent of $\ep$.

The simple argument in this section  verifies that the boundary layer, which occurs for the Navier-Stokes problem with the Navier friction boundary conditions at small viscosity, is a weak boundary layer as we do not need to construct any corrector function, managing the difference $u^{\ep} - u^0$ on the boundary, to obtain the convergence result \refE{c9}.\\
} % End  Ignore vanishing viscosity result of Iftimie and Planas extended to allow
  % alpha to be negative.

%
% Section
%
\section{Existence and Uniqueness Theorems}\label{S:ExistUnique}

\noindent Thanks to \refL{1}, by applying the Galerkin method, one can construct solutions to \refE{0-1} with \refE{General_NBC} in the following sense, as shown in \cite{IS10} (see \refR{SimpleExtension}):
\begin{theorem}[Iftimie, Sueur \cite{IS10}]\label{T:NSExistence}   % \label{t:0-1}
Assuming that $u_0$ lies in $H$ and $f$ lies in $L^{2}(0, T; L^2(\Omega)^3)$, there exists a weak solution $u^{\ep} \in \mathcal{C}^0_{w}(0, T; H) \cap L^2(0, T; H^1(\Omega)^3)$ of the Navier Stokes equations, \refE{0-1}, with the generalized Navier friction boundary conditions, \refE{General_NBC}, in the sense that, for any $v \in H \cap \mathcal{C}^{\infty}_0 ([0,T] \times \overline{\Omega})$,
\begin{align*}\begin{array}{l}
\displaystyle \spacer - \int_{0}^{T} \int_{\Omega} u^{\ep} \cdot \dfrac{\pa v}{\pa t} \, d \blds{x} \, dt + 2 \ep \int_{0}^{T}\int_{\Omega}  \mathbf{S}(u^{\ep}) \cdot \mathbf{S}(v) \, d \blds{x} \, dt \\
\displaystyle  + 2 \ep \int_{0}^{T} \int_{\Gamma} \Cal{A} u^{\ep} \cdot v \, dS \, dt  +  \int_{0}^{T}\int_{\Omega} (u^{\ep}\cdot \nabla)u^{\ep} \cdot v \, d \blds{x} \, dt
= \int_{\Omega} u_0 \cdot v|_{t=0} \, d \blds{x}.
\end{array}
\end{align*}
\end{theorem}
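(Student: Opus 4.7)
The plan is to follow the classical Galerkin scheme, adapted to accommodate both the generalized Navier boundary condition and the sign-indefinite tensor $\Cal{A}$. First I would work in the space $V = \{u \in H^1(\Omega)^3 : \dv u = 0,\; u \cdot \n = 0 \text{ on } \Gamma\}$ and consider the bilinear form $a(u,v) = 2\int_\Omega \mathbf{S}(u):\mathbf{S}(v)\,d\blds{x} + 2\int_\Gamma \Cal{A} u \cdot v\,dS$, which, thanks to \refL{1}, is precisely the weak realization of $-\Delta$ paired with \refE{General_NBC}. I would choose as a Galerkin basis $\{w_k\}_{k\ge 1}$ the eigenfunctions of the self-adjoint operator on $H$ associated with $a(\cdot,\cdot) + \lambda(\cdot,\cdot)_{L^2}$ for $\lambda$ large enough that the shifted form is coercive on $V$ (this coercivity follows from Korn's inequality together with the trace/Young absorption described below). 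The basis is then orthogonal in both $H$ and $V$.

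Second, I project \refE{0-1} onto $V_n = \mathrm{span}\{w_1,\dots,w_n\}$ to get a system of ODEs for the coefficients of $u_n^\eps(t) = \sum_{k=1}^n g_k^n(t) w_k$ with locally Lipschitz right-hand side, giving local existence. Testing the Galerkin equation against $u_n^\eps$ and using \refL{1} produces the energy identity
\begin{equation*}
\tfrac{1}{2}\tfrac{d}{dt}\|u_n^\eps\|_{L^2(\Omega)}^2 + 2\eps\|\mathbf{S}(u_n^\eps)\|_{L^2(\Omega)}^2 + 2\eps\int_\Gamma \Cal{A} u_n^\eps \cdot u_n^\eps\,dS = \int_\Omega f \cdot u_n^\eps\,d\blds{x},
\end{equation*}
since $\int_\Omega (u_n^\eps \cdot \na)u_n^\eps \cdot u_n^\eps\,d\blds{x} = 0$ by $\dv u_n^\eps = 0$ and $u_n^\eps \cdot \n = 0$.

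Third, the main obstacle: the boundary integral $\int_\Gamma \Cal{A} u_n^\eps \cdot u_n^\eps\,dS$ has indeterminate sign because $\Cal{A}$ is a general $(1,1)$-tensor. I would control it by the trace-like estimate $\|u\|_{L^2(\Gamma)}^2 \le C\|u\|_{L^2(\Omega)}\|\na u\|_{L^2(\Omega)} + C\|u\|_{L^2(\Omega)}^2$ (compare \refE{c7} in the commented-out section), then Korn's inequality $\|\mathbf{S}(u_n^\eps)\|_{L^2}^2 \ge \kap_{\mathbf{S}}\|\na u_n^\eps\|_{L^2}^2$ and Young's inequality let me absorb a small multiple of $\eps\|\na u_n^\eps\|_{L^2}^2$ on the left, at the cost of a term of the form $C(1+\ov{\alpha}^2)\eps\|u_n^\eps\|_{L^2}^2$ on the right. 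Combined with the trivial bound on the forcing, Gronwall's lemma yields uniform-in-$n$ bounds
\begin{equation*}
\|u_n^\eps\|_{L^\infty(0,T;H)} + \eps^{1/2}\|u_n^\eps\|_{L^2(0,T;V)} \le C(T,\ov{\alpha},u_0,f),
\end{equation*}
which in particular prevents blow-up and gives global existence of the approximations.

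Finally, I pass to the limit by standard compactness. The uniform bounds give weak-$*$ convergence in $L^\infty(0,T;H)$ and weak convergence in $L^2(0,T;V)$ along a subsequence. From the Galerkin equation one derives a uniform bound on $\pa_t u_n^\eps$ in $L^{4/3}(0,T;V')$ (using the $H^1 \hookrightarrow L^3$ embedding for the nonlinearity in 3D), and Aubin--Lions then yields strong convergence in $L^2(0,T;H)$, enough to pass to the limit in $\int_0^T\int_\Omega (u_n^\eps \cdot \na)u_n^\eps \cdot v$ after integration by parts; weak convergence handles the linear interior term and, via continuity of the trace $H^1(\Omega) \to L^2(\Gamma)$, also the boundary term $2\eps\int_0^T\int_\Gamma \Cal{A} u_n^\eps \cdot v\,dS\,dt$. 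The remaining property $u^\eps \in \Cal{C}^0_w(0,T;H)$ follows in the standard way from the $L^\infty(H)$ bound, weak lower semicontinuity, and density of smooth test functions. The only non-routine step is the sign-indefinite boundary term in the energy identity, and once it is absorbed the construction is entirely classical.
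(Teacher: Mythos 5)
Your Galerkin construction is correct and is essentially the approach the paper itself points to: \refT{NSExistence} is quoted from Iftimie--Sueur \cite{IS10}, and the paper's only indication of its proof is that it follows ``by applying the Galerkin method'' with \refL{1} realizing the generalized Navier boundary condition weakly, the general tensor $\Cal{A}$ being handled exactly by the trace/Korn/Young absorption you describe (cf.\ \refR{SimpleExtension} and \refL{trace-like}). The only cosmetic imprecision is invoking ``$H^1 \hookrightarrow L^3$'' for the $L^{4/3}(0,T;V')$ bound on $\pa_t u_n^\eps$, where the standard route is $\smallnorm{u}_{L^4} \ll \smallnorm{u}_{L^2}^{1/4}\smallnorm{u}_{H^1}^{3/4}$, but this does not affect the argument.
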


We have the following well-posedness result for solutions to the Euler equations:
\begin{theorem}\label{T:EulerWellPosedness}
	Suppose that $u^0$ lies in $H \cap H^m(\Omega) \cap C^{1, \mu}(\Omega)$, $\mu$ in $(0, 1]$, $m \ge 3$
	is an integer,
	$f$ lies in $C^\iny_{loc}([0, \iny); C^\iny(\ol{\Omega}))$, and $\Gamma$ is of class $C^{m+2}$.
	Then for some time $T > 0$ there exists a unique solution, $u^0$, to \refE{Euler} lying in
	$C^1_b([0,T] \times \Omega) \cap C([0, T]; H^m(\Omega))$. The corresponding pressure, $p^0$,
	lies in $L^{\infty}(0, T; H^{m+1}(\Omega))$ and is unique up to an additive function of time.
\end{theorem}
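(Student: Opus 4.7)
The plan is to establish this as a classical local-in-time well-posedness result for the 3D Euler equations with impermeable boundary (in the spirit of Lichtenstein, Ebin–Marsden, Bourguignon–Brezis, and Temam). I would construct approximate solutions by a Galerkin scheme in a basis of $H$ (for instance, eigenfunctions of a suitable Stokes-type operator satisfying $u\cdot\n=0$), obtaining $u^N(t)$ which solve finite-dimensional ODEs and exist on a short time interval.

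The core of the argument is an $H^m$ energy estimate uniform in $N$. The $L^2$ estimate is immediate from $\int_\Omega (u \cdot \na)u \cdot u = 0$, which uses both $\dv u=0$ and $u\cdot\n=0$. For higher derivatives one differentiates the momentum equation and pairs with $u$; the nonlinear term is controlled by a Kato–Ponce-type commutator estimate for $[D^\al, u\cdot\na]u$. The delicate point is that tangential derivatives of arbitrary order preserve the boundary condition but normal derivatives do not, so near $\Gamma$ one decomposes derivatives into tangential and normal parts, recovering normal derivatives of $u$ from $\dv u = 0$ and from the evolution equation itself (this is where $\Gamma\in C^{m+2}$ is used, together with elliptic regularity for the pressure). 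The outcome is
\begin{equation*}
\tfrac{d}{dt}\|u^N\|_{H^m}^2 \;\le\; C\bigl(1 + \|\na u^N\|_{L^\iny}\bigr)\|u^N\|_{H^m}^2 + C\|f\|_{H^m}^2,
\end{equation*}
and since $m \ge 3$ gives $H^m \hookrightarrow W^{1,\iny}$ in three dimensions, Gronwall yields a uniform bound on some interval $[0,T]$ with $T$ depending only on $\|u_0\|_{H^m}$ and $f$.

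For the pressure I take the divergence of the momentum equation, obtaining the Neumann problem
\begin{equation*}
	\Delta p^0 = -\dv\bigl((u^0\cdot\na)u^0\bigr) + \dv f
	\quad\text{in }\Omega,\qquad
	\tfrac{\pa p^0}{\pa \n} = \bigl(f - (u^0\cdot\na)u^0\bigr)\cdot\n
	\quad\text{on }\Gamma,
\end{equation*}
where the Neumann boundary value is obtained by dotting the momentum equation with $\n$ and using $u^0\cdot\n = 0$ on $\Gamma$. Standard elliptic regularity in a $C^{m+2}$ domain then places $p^0\in L^\iny(0,T;H^{m+1}(\Omega))$, unique up to an additive function of $t$. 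Passing $N\to\iny$ via Aubin–Lions compactness yields a solution in $L^\iny(0,T;H^m)$; strong continuity in $H^m$ follows by a standard Bona–Smith regularization argument, and $C^1_b$-regularity then follows by Sobolev embedding applied to $u$ and $\pa_t u$ recovered from the equation. Uniqueness is an elementary $L^2$ energy estimate on the difference of two solutions, again exploiting $u\cdot\n=0$ to kill boundary terms.

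The main obstacle is the $H^m$ estimate in the presence of the curved boundary condition $u\cdot\n = 0$: tangential derivatives behave well, but normal derivatives must be controlled by reconstruction from the divergence-free condition and from the momentum equation, which couples the velocity estimate to the elliptic estimate for the pressure. Once this coupling is set up cleanly, the rest of the argument is standard.
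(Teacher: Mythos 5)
Your proposal is a from-scratch sketch of the classical local well-posedness theory, whereas the paper does not reprove this result at all: its proof is a two-sentence citation, combining Theorem 1 and Theorem 2(3) of \cite{Koc02} (Hölder-space well-posedness with $f \equiv 0$, adapted to smooth forcing, with the $C^{m+2}$ boundary regularity entering only through the Leray projector) with \cite{TemEuler75, TemEuler76} for the Sobolev-space well-posedness and the $L^\infty(0,T;H^{m+1})$ pressure regularity. This difference explains a feature of the statement that your route makes invisible: the hypothesis $u_0 \in C^{1,\mu}(\Omega)$ is there because it is the hypothesis of Koch's Hölder-space theorem, which supplies the $C^1_b([0,T]\times\Omega)$ solution; in your argument that hypothesis is redundant, since $H^m(\Omega)\subset C^{1,1/2}(\ol\Omega)$ for $m\ge 3$ in three dimensions and you recover $C^1_b$ by Sobolev embedding plus the equation for $\pa_t u^0$. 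Your treatment of the pressure (Neumann problem, $\Delta p^0 = -\dv((u^0\cdot\na)u^0)+\dv f$ with the boundary condition obtained by dotting the equation with $\n$, then $C^{m+2}$ elliptic regularity) and the $L^2$ uniqueness argument are exactly the standard ingredients underlying the cited results, so in substance you are reconstructing what the paper outsources.

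One step of your sketch would not survive being written out as stated: the construction of approximate solutions by a Galerkin scheme in Stokes-type eigenfunctions. For the Euler equations in a bounded domain the finite-dimensional projector $P_N$ neither commutes with derivatives nor is uniformly bounded on $H^m\cap H$ (the eigenfunctions carry their own boundary conditions, which the Euler solution does not satisfy), so the uniform-in-$N$ $H^m$ estimate you describe does not pass to the projected system in any straightforward way; this is precisely why the classical existence proofs (Temam, Bourguignon--Brezis, Ebin--Marsden, Kato--Lai, Koch) use iteration on linearized transport problems, Lagrangian/geometric methods, or carefully tailored approximation schemes rather than a naive Galerkin method. The a priori estimate itself (tangential derivatives plus recovery of normal derivatives from $\dv u = 0$ and the pressure estimate) is the right one; the gap is only in the approximation scheme claimed to realize it, and it is repaired by substituting one of the standard constructions or, as the paper does, by citing them.
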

\begin{proof}
Combining Theorem 1 and Theorem 2 part 3 of \cite{Koc02} gives the existence, uniqueness, and regularity of $u$ when $f \equiv 0$ and the boundary is smooth. The proof is straightforward to adapt to smooth forcing, and the strongest restriction on the smoothness of the boundary comes through the use of the Leray projector (Lemma 2 of \cite{Koc02}), where, however, $C^{m + 2}$-regularity suffices. The regularity of the pressure (as well as the well-posedness in Sobolev spaces) is proved in \cite{TemEuler75, TemEuler76}.
\end{proof}

When \refE{0-6} holds, by virtue of \refT{EulerWellPosedness} and Sobolev embedding, for some $T > 0$,
\begin{align}\label{e:0-7}
	u^0 \in C^1_b([0,T] \times \Omega) \cap C([0, T]; H^m(\Omega)),
	\; p^0 \in L^{\infty}(0, T; H^{m+1}(\Omega))
	\text{ for } m \ge 5.
\end{align}
The regularity in \refE{0-7} of the solution is what we require; we do not claim that the assumptions in \refE{0-6} are the minimal ones that guarantee such regularity, however.

In \cite{MasmoudiRousset2010}, Masmoudi and Rousset obtain the well-posedness result that we state in \refT{MR} for solutions to  \refEAnd{0-1}{General_NBC} in the conormal Sobolev spaces of \refD{HConormal}   (see \refR{SimpleExtension}).

 \begin{definition}\label{D:HConormal}
Let $\Omega$ be a $d$-dimensional manifold, $d \ge 1$, with $C^k$-boundary, $k \ge 1$. Viewing vector fields as derivations of $C^\iny(\Omega)$, we say that a vector field, $X$, is tangent to $\prt \Omega$ if $X f = 0$ on $\prt \Omega$ whenever $f$ is constant on $\Omega$. Let $(Z_j)_{j=1}^N$ be a set of generators of vector fields tangent to $\prt \Omega$. (Locally, only $d$ such vector fields are needed, but for a global basis, $N$ will be greater than $d$.) For a multiindex, $\beta$, let $Z^\beta= Z_1^{\beta_1} \cdots Z_N^{\beta_N}$. Define
\begin{align*}
	H_{co}^m(\Omega)
		&= \set{f \in L^2(\Omega) \colon Z^\beta f \in L^2(\Omega) \text{ for all } \abs{\beta} \le m}
\end{align*}
with
\begin{align*}
	\norm{f}_{H_{co}^m(\Omega)}^2
		= \sum_{\abs{\beta} \le m} \norm{Z^\beta f}^2_{L^2(\Omega)}.
\end{align*}
We say that $f$ is in the space, $W^{m, \infty}_{co}$, if
\begin{equation*} % \label{e:MR4}
\| f \|_{W^{m, \infty}_{co}} := \sum_{|\beta| \leq m} \| Z^\beta u \|_{L^\iny(\Omega)} < \infty
\end{equation*}
and we define the space $E^m$ by
\begin{equation*} % \label{e:MR5}
E^m := \{ u \in H^{m}_{co}(\Omega) | \text{ } \nabla u \in H^{m-1}_{co}(\Omega) \}
\end{equation*}
with the obvious norm.
\end{definition}

\Ignore{ % Older, \alpha version of theorem
\begin{theorem}[Masmoudi, Rousset \cite{MasmoudiRousset2010}]\label{T:MR}
Let $m$ be an integer satisfying $m>6$ and $\Omega$ be a $C^{m+2}$ domain. Consider $u_0 \in E^m\cap H$ such that $\nabla u_0 \in W^{1, \infty}_{co}$. Then there exists $T>0$ such that for every $0 < \ep <1$ and $\alpha$, $|\alpha| \leq 1$, there exists a unique $u^{\ep} \in \mathcal{C}([0,T], E^m)$ such that $\|  \nabla u^{\ep} \|_{1, \infty}$ is bounded on $[0,T]$ solution of \refEAnd{0-1}{General_NBC} with $f=0$. Moreover, there exists $C>0$ independent of $\ep$ and $\alpha$ such that
\begin{align}\label{e:MR1}
	\begin{split}
		\sup_{[0,T]} &\big( \|u^{\ep}(t)\|_{H_{co}^m(\Omega)}
		+ \|  \nabla u^{\ep}(t) \|_{H_{co}^{m - 1}(\Omega)}
		+ \| \nabla u^{\ep}(t) \|_{W^{1, \infty}_{co}} \big) \\
		&
		+ \ep \int_{0}^{T} \| \nabla^2 u(s) \|^2_{H_{co}^{m - 1}(\Omega)} \, ds
		\le C.
	\end{split}
\end{align}
\end{theorem}
} % End Ignore
\begin{theorem}[Masmoudi, Rousset \cite{MasmoudiRousset2010}]\label{T:MR}
Let $m$ be an integer satisfying $m>6$ and $\Omega$ be a $C^{m+2}$ domain. Consider $u_0 \in E^m\cap H$ such that $\nabla u_0 \in W^{1, \infty}_{co}$. Then there exists $T>0$ such that for all sufficiently small $\eps$ there exists a unique solution, $u^{\ep} \in \mathcal{C}([0,T], E^m)$, to \refEAnd{0-1}{General_NBC} with $f=0$ and such that $\|  \nabla u^{\ep} \|_{1, \infty}$ bounded on $[0,T]$. Moreover, there exists $C = C(\ol{\al}) >0$, where $\ol{\al} = \norm{\Cal{A}}_{C^m(\Gamma)}$, such that
\begin{align}\label{e:MR1}
	\begin{split}
		\sup_{[0,T]} &\big( \|u^{\ep}(t)\|_{H_{co}^m(\Omega)}
		+ \|  \nabla u^{\ep}(t) \|_{H_{co}^{m - 1}(\Omega)}
		+ \| \nabla u^{\ep}(t) \|_{W^{1, \infty}_{co}} \big) \\
		&
		+ \ep \int_{0}^{T} \| \nabla^2 u(s) \|^2_{H_{co}^{m - 1}(\Omega)} \, ds
		\le C.
	\end{split}
\end{align}
\end{theorem}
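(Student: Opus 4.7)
The plan is to derive uniform-in-$\eps$ a priori estimates in the conormal Sobolev spaces of \refD{HConormal}, and then combine them with the existence scheme behind \refT{NSExistence} to build a unique solution on a common time interval $[0,T]$ independent of $\eps$. The choice of the conormal scale is dictated by the expected boundary layer: under the generalized Navier condition \refE{General_NBC}, tangential regularity of $u^\eps$ should be conserved, while normal derivatives are to be controlled indirectly through the vorticity. This is the feature that distinguishes the present setting from the no-slip case, where $\na u^\eps$ is expected to behave like $\eps^{-1/2}$ in the layer and no such uniform conormal bound is available.

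First, fix a finite family of smooth vector fields $(Z_j)_{j=1}^N$ tangent to $\Gamma$, supplemented near the boundary by a distinguished conormal field of the form $Z_0=(\varphi/(1+\varphi))\,\prt_\n$, where $\varphi(\blds{x})=\dist(\blds{x},\Gamma)$ in a tubular neighborhood. For each multi-index $\beta$ with $|\beta|\le m$, apply $Z^\beta$ to \refE{0-1} with $f\equiv 0$, pair with $Z^\beta u^\eps$ in $L^2(\Omega)$, and use \refL{1} to integrate the viscous term. The boundary trace combines with $[\mathbf{S}(u^\eps)\n]_{\text{tan}}=-\Cal{A}u^\eps$ from \refE{General_NBC} to produce, after commuting $Z^\beta$ through $\mathbf{S}$, a good dissipation $2\eps\|\mathbf{S}(Z^\beta u^\eps)\|_{L^2(\Omega)}^2$ plus a boundary term bounded by $\ov\alpha\|u^\eps\|_{H^m_{co}(\Omega)}^2$. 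The nonlinear and commutator contributions $[Z^\beta,(u^\eps\cdot\na)\cdot]$ are handled by Moser-type product estimates in $H^m_{co}(\Omega)$, at the price of $\|\na u^\eps\|_{L^\iny}\|u^\eps\|_{H^m_{co}(\Omega)}^2$, and the pressure is absorbed by solving the associated Neumann problem and using elliptic regularity in conormal spaces.

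Second, to recover the missing normal derivative and obtain $\na u^\eps\in H^{m-1}_{co}(\Omega)$, I would use $\dv u^\eps=0$ for one component and the vorticity $\omega^\eps=\curl u^\eps$ for the remaining two. The boundary condition \refE{General_NBC} can be rewritten---using the algebraic identity relating $[\mathbf{S}(u^\eps)\n]_{\text{tan}}$ to $[\omega^\eps\times\n]_{\text{tan}}$ via the Weingarten map, which is exactly what underlies \refA{LionsLikeBCs}---as a Robin-type condition for $\omega^\eps$ on $\Gamma$ with $C^2$ coefficient depending on $\Cal{A}$. An energy estimate on $\omega^\eps$ in $H^{m-1}_{co}(\Omega)$, with the boundary term controlled by $\ov\alpha$, then closes the system. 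The $L^\iny$ part of \refE{MR1}, namely $\|\na u^\eps\|_{W^{1,\iny}_{co}}$, is extracted by an anisotropic Sobolev embedding: tangentially from $H^k_{co}\hookrightarrow L^\iny$ for $k$ sufficiently large, and in the normal direction by a one-dimensional embedding on the level sets $\{\varphi=\text{const}\}$. The threshold $m>6$ arises precisely as the smallest exponent for which this embedding yields the required $W^{1,\iny}_{co}$ bound.

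Combining these estimates yields a differential inequality of the form $\tfrac{d}{dt}\Cal{E}(t)\le C(\ov\alpha)\,\Cal{E}(t)\bigl(1+\Cal{E}(t)^{\rho}\bigr)$ for the quantity $\Cal{E}(t)$ appearing on the left-hand side of \refE{MR1}, with constants independent of $\eps$; Gronwall then gives a uniform life-span $T$ and the bound \refE{MR1}. Existence at fixed $\eps>0$ on $[0,T]$ follows by upgrading the weak solutions of \refT{NSExistence} using the a priori regularity, or directly by Galerkin approximation in $E^m$; uniqueness follows from a standard $L^2$ energy estimate on the difference of two solutions using the now established $W^{1,\iny}$ bound on $\na u^\eps$. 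The hard part is the vorticity step: the Robin-type boundary condition for $\omega^\eps$ is precisely the ingredient the no-slip problem lacks, and constructing it rigorously requires carefully unpacking the tensorial identity that converts $[\mathbf{S}(u^\eps)\n]_{\text{tan}}+\Cal{A}u^\eps=0$ into an expression for $[\omega^\eps\times\n]_{\text{tan}}$, and then controlling all the commutators produced when $Z^\beta$ is applied to it.
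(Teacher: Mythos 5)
You should first note that \refT{MR} is not proved in this paper at all: it is quoted verbatim (up to the generalization from $\al I$ to $\Cal{A}$) from Masmoudi and Rousset \cite{MasmoudiRousset2010}, and the only commentary the paper offers is \refR{SimpleExtension}, which observes that the cited proof extends easily to a channel and to a general tensor $\Cal{A}$ by replacing $\al$ with $\ov{\al}=\norm{\Cal{A}}_{C^m(\Gamma)}$ in the boundary terms. So there is no ``paper's own proof'' to match; what can be judged is whether your sketch is a faithful reconstruction of the cited argument. In broad outline it is: conormal energy estimates for $Z^\beta u^\eps$ using the Navier condition to control the boundary terms by $\ov{\al}$, pressure handled through an elliptic Neumann problem, and control of the normal derivative through $\dv u^\eps=0$ plus a vorticity-type quantity whose boundary trace is prescribed (to leading order) by the Navier condition via the Weingarten map --- this is indeed the skeleton of \cite{MasmoudiRousset2010}.

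The genuine gap is in your treatment of the $\|\nabla u^\eps\|_{W^{1,\infty}_{co}}$ bound, which is the heart of the cited proof, not a routine embedding step. You propose to ``extract'' it by an anisotropic Sobolev embedding (tangential $H^k_{co}\hookrightarrow L^\infty$ plus a one-dimensional embedding in the normal variable) and assert that $m>6$ is exactly the threshold for this. But any such embedding applied to $f=\nabla u^\eps$ (for instance the inequality of \refT{AnistropicAgmonsInequality}) requires uniform-in-$\eps$ control of at least one normal derivative of $\nabla u^\eps$, i.e.\ of $\nabla^2 u^\eps$ in a conormal space --- and that is precisely what is \emph{not} available uniformly: \refE{MR1} only controls $\sqrt{\eps}\,\nabla^2 u^\eps$ in $L^2_t H^{m-1}_{co}$, and in the boundary layer $\nabla^2 u^\eps$ genuinely blows up as $\eps\to 0$. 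In \cite{MasmoudiRousset2010} this step is instead carried out by writing the transport--diffusion equation satisfied by a modified vorticity (chosen so that its boundary value vanishes to leading order thanks to the Navier condition), localizing near the boundary, and using pointwise/maximum-principle-type estimates for the associated convection--diffusion problem, together with conormal embeddings only for the lower-order and interior pieces; the condition $m>6$ comes from closing the nonlinear estimates in that scheme, not from a single embedding. Without an argument of this kind your differential inequality for $\Cal{E}(t)$ does not close, since the $L^\infty$ norm of $\nabla u^\eps$ enters the energy and commutator estimates but is not controlled by the quantities you actually bound. The remaining ingredients (uniqueness via an $L^2$ estimate using the Lipschitz bound, existence on a uniform time interval from the a priori estimates) are fine once that step is supplied.
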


\begin{remark}\label{R:SimpleExtension}
	\refTAndAnd{NSExistence}{EulerWellPosedness}{MR} were proved for a bounded domain, but each
	of the proofs extends easily to a 3D channel. \refTAnd{NSExistence}{MR} were also proved
	assuming that $\Cal{A} = \al I$,
	where $\al$ is a constant, but they easily extend to a general $\Cal{A}$ by using $\ol{\al} =
	\norm{\Cal{A}}_{C^{m}(\Gamma)}$ in place of $\al$ in certain boundary terms, much as we do
	in \refSAnd{ChannelCorrectorBounds}{CorrectorBoundsCurved}.
\end{remark}

%
% Section
%
\section{Channel domain}\label{S:S-2}

\noindent In this section, we prove \refE{68} for a periodic channel domain in $\mathbb{R}^3$. We set
$
	\Omega_{\infty} := \mathbb{R}^2 \times (0, h),
$
and consider solutions to \refEAnd{0-1}{General_NBC} in a channel domain $\Omega_{\infty}$. That is,
\begin{equation}\label{e:1}\left\{\begin{array}{l}
\spacer \dfrac{\pa u^{\ep}}{\pa t} -\ep \Delta u^{\ep} + (u^{\ep} \cdot \nabla)u^{\ep} + \nabla p^{\ep} = f, \text{ in } \Omega_{\infty}\times(0,T) ,\\
\spacer \text{div }u^{\ep} = 0, \text{ in } \Omega_{\infty} \times(0,T),\\
\spacer \text{$u^{\ep}$ and $p^{\ep}$ are periodic in $x$ and $y$ directions with periods $L_1$ and $L_2$},\\
u^{\ep}| _{t=0} = u_{0}, \text{ in } \Omega_{\infty}.
\end{array}
\right.
\end{equation}
Here, $f$ and $u_{0}$, satisfying \refE{0-6}, are assumed to be periodic in $x$ and $y$ directions with periods $L_1$ and $L_2$, respectively.

Since $\n = (0, 0, -1)$ at $z = 0$ and $\n = (0, 0, 1)$ at $z=h$, we can write the Generalized Navier boundary condition, appearing in \refE{General_NBC} with \refE{0-3}, in the form
\begin{equation}\label{e:5}\left\{\begin{array}{l}
            \spacer
            u^{\ep}_3 = 0, \text{ at } z=0, h,\\
            \displaystyle \spacer
            \dfrac{\pa u^{\ep}_i}{\pa z} - 2 \sum_{j=1}^2 \alpha_{ij} u^{\ep}_j =0, \text{ } i=1,2, \text{ at } z=0,\\
            \displaystyle
            \dfrac{\pa u^{\ep}_i}{\pa z} + 2\sum_{j=1}^2 \alpha_{ij} u^{\ep}_j =0, \text{ } i=1,2, \text{ at } z=h.
\end{array}\right.
\end{equation}

The corresponding limit problem, \refE{Euler}, can be written as
\begin{equation}\label{e:4}\left\{\begin{array}{l}
\spacer \dfrac{\pa u^0}{\pa t}  + (u^0 \cdot \nabla)u^0 + \nabla p^{0} = f, \text{ in } \Omega_{\infty}\times(0,T) ,\\
\spacer \text{div }u^0 = 0, \text{ in } \Omega_{\infty}\times(0,T),\\
\spacer \text{$u^0$ and $p^{0}$ are periodic in $x$ and $y$ directions with periods $L_1$ and $L_2$},\\
\spacer u^0_3 = 0, \text{ at } z= 0, h,\\
u^0| _{t=0} = u_{0}, \text{ in } \Omega_{\infty}.
\end{array}
\right.
\end{equation}
For the sake of convenience, we set
\begin{align*}
\Omega := (0, L_1)\times(0, L_2) \times (0,h),
\end{align*}
% Since we are studying the asymptotic behavior of $u^{\ep}$ as $\ep$ tends to $0$, we assume that
and assume (to simplify the expressions in \refE{22}) that
\begin{align*}
	\ep < (h/8)^2.
\end{align*}

To study the boundary layer associated with the Navier-Stokes problem \refE{1} with the Navier friction boundary conditions \refE{5}, we expand $u^{\ep}$ as
\begin{equation}\label{e:12}
u^{\ep} \simeq u^0 + \tht^{\ep} ,%+ \ep \varphi,
\end{equation}
where $u^0$ is the solution of \refE{4} and $\tht^{\ep}$ is a divergence-free corrector, which will be determined below. The main role of $\tht^{\ep}$ is to correct the tangential error related to the normal derivative of $u^{\ep} - u^0$ on the boundary; see \refE{14} below.

%--- Subsection
\subsection{The corrector}\label{S:S2.1}
To define a corrector, $\tht^{\ep} = (\tht^{\ep}_1, \tht^{\ep}_2, \tht^{\ep}_3)$, using the ansatz $\tht^{\ep}_3 \simeq \ep^{1/2}\tht^{\ep}_i$, $i=1,2$, with respect to the order of $\ep$ in any Sobolev space, we first devote ourselves to find a suitable boundary condition for $\tht^{\ep}_i$, $i =1,2$. By inserting the expansion \refE{12} into \refE{5}$_{2,3}$, we find that, for $i=1,2$,
\begin{align*} % \label{e:13}
\left\{\begin{array}{l}
            \spacer \displaystyle
            \dfrac{\pa u^0_i}{\pa z} - 2 \sum_{j=1}^2 \alpha_{ij} u^0_j  + \dfrac{\pa \tht^{\ep}_i}{\pa z} - 2 \sum_{j=1}^2 \alpha_{ij} \tht^{\ep}_j  \simeq 0,  \text{ at } z=0,\\
            \displaystyle
            \dfrac{\pa u^0_i}{\pa z} + 2 \sum_{j=1}^2 \alpha_{ij} u^0_j  + \dfrac{\pa \tht^{\ep}_i}{\pa z} + 2 \sum_{j=1}^2 \alpha_{ij} \tht^{\ep}_j  \simeq 0, \text{ at } z=h.
\end{array}\right.
\end{align*}
For smooth $\alpha_{ij}$, $1 \leq i,j \leq 2$ on $\Gamma$, independent of $\ep$, we expect that $\pa \tht^{\ep}_i/\pa z \gg 2 \sum_{j=1}^2 \alpha_{ij} \tht^{\ep}_j $, $i=1,2$. Hence, we use the Neumann boundary condition for $\tht^{\ep}_i$,
\begin{equation}\label{e:14}
\left\{\begin{array}{l}
            \spacer \displaystyle
            \dfrac{\pa \tht^{\ep}_i}{\pa z} = \widetilde{u}_{i, L}:= - \Big(\dfrac{\pa u^0_i}{\pa z} - 2  \sum_{j=1}^2 \alpha_{ij} u^0_j \Big), \text{ at } z=0 \text{ for } i = 1, 2,\\
            \displaystyle
            \dfrac{\pa \tht^{\ep}_i}{\pa z} = \widetilde{u}_{i, R} := - \Big( \dfrac{\pa u^0_i}{\pa z} + 2  \sum_{j=1}^2 \alpha_{ij} u^0_j\Big), \text{ at } z=h  \text{ for } i = 1, 2.
\end{array}\right.
\end{equation}

In the theory of boundary layer analysis, it is well known that the Neumann type boundary condition, \refE{14}, is useful when treating any weak boundary layer phenomenon. More precisely, to improve the convergence given in \refE{c9}, it is sufficient to construct a corrector function that fixes the normal derivative of the difference $u^{\ep} - u^0$ on the boundary, instead of the difference itself.

Toward this end, we  first define cutoff functions, $\sig_L, \sig_R$, belonging to $\mathcal{C}^{\infty}([0,h])$, by\begin{equation}\label{e:22}
\sig_{L}(z) := \left\{\begin{array}{ll}
                            \spacer 1, & 0 \leq z \leq h/8,\\
                            0, & h/4 \leq z \leq h. \end{array}\right.,
	\quad
	\sig_R (z) : = \sig_L(h-z).
\end{equation}
Then
% by using \refEAnd{14}{22},
we define the tangential component $\tht^{\ep}_i$, $i=1,2$, of the corrector $\tht^{\ep} = (\tht^{\ep}_1, \tht^{\ep}_2, \tht^{\ep}_3)$ as
\begin{equation}\label{e:23-1}
\tht^{\ep}_i := \tht^{\ep}_{i, L} + \tht^{\ep}_{ i, R}, \text{ }i=1,2,
\end{equation}
where
\begin{align}\label{e:24}
	\begin{split}
		\tht^{\ep}_{i, L}
			&= - \sqrt{\ep} \widetilde{u}_{i, L}(x,y;t) \sig_L(z) e^{-\frac{z}{\sqrt{\ep}}}
				- \ep \widetilde{u}_{i, L}(x,y;t) \sig^{\pri}_L(z)
					\Big(1 - e^{-\frac{z}{\sqrt{\ep}}}\Big)  \\
			&= - \ep \widetilde{u}_{i, L}(x,y;t) \dfrac{\pa}{\pa z} \Big\{ \sig_L(z)
				\Big(1 - e^{-\frac{z}{\sqrt{\ep}}}\Big) \Big\}, \\
			\tht^{\ep}_{i, R}
				&=  \sqrt{\ep} \widetilde{u}_{i, R}(x,y;t) \sig_R(z) e^{-\frac{h - z}{\sqrt{\ep}}}
					- \ep \widetilde{u}_{i, R}(x,y;t)
					\sig^{\pri}_R(z) \Big(1 - e^{-\frac{h - z}{\sqrt{\ep}}}\Big) \\
				&= - \ep \widetilde{u}_{i, R}(x,y;t) \frac{\pa}{\pa z} \Big\{ \sig_R(z)
					\Big(1 - e^{-\frac{h - z}{\sqrt{\ep}}}\Big) \Big\}.
	\end{split}
\end{align}
To make $\tht^{\ep}$ divergence-free, we must define the normal component $\tht^{\ep}_3$ of $\tht^{\ep}$ as
\begin{align*} % \label{e:25-1}
\tht^{\ep}_3 = \tht^{\ep}_{3, L} + \tht^{\ep}_{3, R},
\end{align*}
 where
\begin{equation}\label{e:25}\begin{array}{l}
\spacer \displaystyle \tht^{\ep}_{3, L} = \ep  \Big( \dfrac{\pa \widetilde{u}_{1, L}}{\pa x}+ \dfrac{\pa \widetilde{u}_{2, L}}{\pa y}\Big)(x,y;t)  \sig_L(z) \Big(1 - e^{-\frac{z}{\sqrt{\ep}}}\Big),\\
\displaystyle \tht^{\ep}_{3, R} = \ep  \Big( \dfrac{\pa \widetilde{u}_{1, R}}{\pa x}+ \dfrac{\pa \widetilde{u}_{2, R}}{\pa y}\Big)(x,y;t)  \sig_R(z) \Big(1 - e^{-\frac{h - z}{\sqrt{\ep}}}\Big).
\end{array}
\end{equation}
(This form of the corrector is as in \cite{KTW2010}, adapted to Navier boundary conditions.)

Thanks to \refEAnd{22}{24}, by differentiating \refE{23-1} with respect to the normal variable $z$, one can easily verify that the tangential components, $\tht^{\ep}_1, \tht^{\ep}_2$, satisfy the desired boundary condition \refE{14}. Moreover, from \refE{25}, we infer that
\begin{equation}\label{e:27}
\tht^{\ep}_3 = 0, \text{ at } z=0,h.
\end{equation}
%\indent Thanks to the consistency condition on the initial data \refE{0-comp}, from \refE{14} and \refE{23-1}-\refE{25}, it is easy to see that
%\begin{equation}\label{e:chan_t=0}
%\tht^{\ep}\normmark _{t=0} =0.
%\end{equation}
%

\subsection{Bounds on the corrector}\label{S:ChannelCorrectorBounds}
We introduce the following convenient notation:
\begin{align*}
	\dfrac{\pa^k}{\pa \tau^k} : =   \Big(\begin{array}{l}
                                                    \text{any differential operator of order $k$}\\
                                                    \text{with respect to the tangential variables $x$ and $y$}\end{array}\Big), \hspace{2mm} k \geq0.
\end{align*}
We also use the convention that $\kap_T = \kap_T(T, u_0, f)$ is a constant that depends on $T$, $u_0$, and $f$, but is independent of $\ep$ and $\mathcal{A}$, and may vary from occurrence to occurrence.

Letting
\begin{align}\label{e:olal}
	\ol{\al} = \norm{\Cal{A}}_{C^m(\Gamma)}, \, m>6,
\end{align}
\refEThrough{14}{25} give
\begin{equation}\label{e:bvofc1}
	\norm{\dfrac{\pa \tht^{\ep}_i}{\pa \tau}}_{L^{\infty}([0,T]\times\ov{\Omega})}
		\leq \kap_T(1+ \ov{\alpha})\ep^{\frac{1}{2}},
	\qquad
	\norm{\dfrac{\pa \tht^{\ep}_i}{\pa z}}_{L^{\infty}([0,T]\times\ov{\Omega})}
		\leq \kap_T(1+ \ov{\alpha}), \text{ } i=1,2,
\end{equation}
\begin{equation}\label{e:bvofc2}
	\norm{\dfrac{\pa \tht^{\ep}_3}{\pa \tau}}_{L^{\infty}([0,T]\times\ov{\Omega})}
		\leq \kap_T(1+ \ov{\alpha})\ep,
	\qquad
	\norm{\dfrac{\pa \tht^{\ep}_3}{\pa z}}_{L^{\infty}([0,T]\times\ov{\Omega})}
		\leq \kap_T(1+ \ov{\alpha})\ep^{\frac{1}{2}}.
\end{equation}

We have the following bounds on the corrector:

\begin{lemma}\label{L:2}
Assume \refE{0-6} holds and that $k, l, n \ge 0$ are integers either $l=1$, $k=0$ or $l=0$, $0 \leq k \leq 2$. Then the corrector, $\tht^{\ep}$, defined by \refEThrough{23-1}{25}, satisfies
\begin{equation}\label{e:31}\left\{\begin{array}{l}
\spacer \norm{ \dfrac{\pa^{l+k+n} \tht^{\ep}_i}{\pa t^l \pa \tau^k \pa z^n}}_{L^{\infty}(0,T;L^{2}(\Omega))} \leq C (1 + \ov{\alpha})\ep^{\frac{3}{4} - \frac{n}{2}}  , \text{ } i=1,2, \\
\spacer \norm{  \dfrac{\pa^{l+k} \tht^{\ep}_3}{\pa t^l \pa \tau^k }  }_{L^{\infty}(0,T;L^{2}(\Omega))} \leq C (1 + \ov{\alpha}) \ep,
\quad
\norm{ \dfrac{\pa^{l+k+n+1} \tht^{\ep}_3}{\pa t^l \pa \tau^k \pa z^{n+1}}}_{L^{\infty}(0,T;L^{2}(\Omega))} \leq C (1 + \ov{\alpha}) \ep^{\frac{3}{4}  - \frac{n}{2}}
\end{array}\right.
\end{equation}
for a constant, $C = C(T, l, k, n, u_0, f)$.
\end{lemma}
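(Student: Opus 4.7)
The plan is to reduce \refE{31} to the estimation of two qualitatively different kinds of pieces in \refEThrough{23-1}{25}: genuine boundary-layer pieces carrying the exponential $e^{-z/\sqrt\eps}$ (or $e^{-(h-z)/\sqrt\eps}$), and cutoff-correction pieces carrying a factor of $\sigma_L'$ or $\sigma_R'$. For the first family, everything will follow from the one-dimensional scaling of the exponential. For the second family, the supports lie away from the boundary and the terms are exponentially small in $\eps$.

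First I would note that the boundary data $\widetilde u_{i,L},\widetilde u_{i,R}$ defined in \refE{14} inherit the regularity of $u^0$ from \refE{0-7}; in particular, for $l+k\le m-1$, their tangential/temporal derivatives $\partial_t^l \partial_\tau^k \widetilde u_{i,\cdot}$ are bounded in $L^\infty(0,T;L^2(\mathbb{T}^2))$ by $\kap_T(1+\ov\al)$, where the factor $\ov\al$ comes from the $\al_{ij}u^0_j$ term in \refE{14}. Since temporal and tangential derivatives applied to $\tht^\eps$ pass through the $z$-dependent factors and act only on $\widetilde u_{i,\cdot}$, they contribute only a harmless constant, so it suffices to control the $n=0$ profile in $z$ and keep track of normal derivatives.

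Next I would use the elementary one-dimensional bound
\begin{equation*}
\int_0^h \Bigl(\partial_z^n e^{-z/\sqrt\eps}\Bigr)^2\,dz
\;=\;\eps^{-n}\int_0^h e^{-2z/\sqrt\eps}\,dz
\;\le\;\tfrac12\,\eps^{\,1/2 - n},
\end{equation*}
so $\|\partial_z^n e^{-z/\sqrt\eps}\|_{L^2(0,h)}\le C\eps^{1/4 - n/2}$. Multiplied against the $\sqrt\eps$-prefactor of the main term of $\tht^\eps_{i,L}$ in \refE{24}, this yields $\sqrt\eps\cdot\eps^{1/4 - n/2}=\eps^{3/4 - n/2}$, which is the first estimate in \refE{31}. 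For $\tht^\eps_3$ in \refE{25}, the prefactor is $\eps$ and the $z$-profile (without any $\partial_z$) is $\sigma_L(z)(1-e^{-z/\sqrt\eps})$, which is uniformly bounded, yielding the $\eps$ estimate for $\partial_t^l\partial_\tau^k\tht^\eps_3$. Taking $n+1$ normal derivatives produces either the bounded term $\sigma_L^{(n+1)}(1-e^{-z/\sqrt\eps})$ (exponentially small as explained below) or a term with the exponential and a prefactor of $\eps\cdot\eps^{-(n+1)/2}$; pairing the latter with $\|\partial_z^{n'} e^{-z/\sqrt\eps}\|_{L^2}\le C\eps^{1/4 - n'/2}$ for appropriate $n'\le n+1$ gives $\eps^{3/4 - n/2}$, proving the last estimate. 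The right-boundary layer is identical.

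Finally, the cutoff-correction pieces are handled uniformly: since $\sigma_L'$ vanishes on $[0,h/8]$, on $\operatorname{supp}\sigma_L'$ one has $e^{-z/\sqrt\eps}\le e^{-h/(8\sqrt\eps)}$, and the same for $\sigma_L^{(j)}$ and for the $\sigma_R$ side. Hence every term containing a derivative of a cutoff is $O(\eps^N)$ for every $N$, uniformly in the number of $\partial_t,\partial_\tau,\partial_z$ derivatives, and can be absorbed into the constant. Combining by Fubini the tangential $L^2(\mathbb{T}^2)$ bound on derivatives of $\widetilde u_{i,\cdot}$ with the one-dimensional $L^2(0,h)$ bound in $z$ delivers \refE{31}.

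The restriction $l\in\{0,1\}$ and $k\le 2$ in the statement is dictated by the available regularity of $u^0$ via \refE{0-7} with $m\ge 5$ (one derivative in $\widetilde u_{i,\cdot}$ costs one derivative of $u^0$, and time derivatives are converted to space derivatives via \refE{4}). There is no conceptual obstacle; the main ``difficulty'' is the bookkeeping of which factor among $\sigma$, $\sigma'$, the exponential, and $\widetilde u$ is hit by each derivative, together with the observation that every occurrence of a derivative of a cutoff is exponentially small and therefore discarded.
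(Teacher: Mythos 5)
Your argument is essentially the paper's own proof: reduce to the left-boundary pieces $\tht^\ep_{i,L}$, let the time and tangential derivatives fall on $\widetilde u_{i,L}$ (controlled through the regularity \refE{0-7} of $u^0$, with the factor $1+\ov{\al}$ coming from $\Cal{A}$), and pair the prefactors $\sqrt{\ep}$, $\ep$ with the one-dimensional bound $\norm{\pa_z^n e^{-z/\sqrt{\ep}}}_{L^2(0,h)} \le C\ep^{\frac14-\frac n2}$; this is exactly the computation in \refEAnd{pf1}{pf2}, up to your using an $L^2$ bound in the tangential variables where the paper uses an $L^\infty$ one.

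One slip in the bookkeeping: it is not true that every term containing a derivative of the cutoff is exponentially small. The terms in which the factor $1-e^{-z/\sqrt{\ep}}$ is left undifferentiated --- for instance the piece $-\ep\,\pa_t^l\pa_\tau^k\widetilde u_{i,L}\,\sig_L'(z)\bigl(1-e^{-z/\sqrt{\ep}}\bigr)$ of \refE{24}, and the term $\ep(\cdots)\,\sig_L^{(n+1)}(z)\bigl(1-e^{-z/\sqrt{\ep}}\bigr)$ that you explicitly label exponentially small --- are of genuine size $O(\ep)$, because $1-e^{-z/\sqrt{\ep}}$ is close to $1$ on $\supp\sig_L'$; only the terms where the exponential itself survives (differentiated or not) as $e^{-z/\sqrt{\ep}}$ are e.s.t.\ there. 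This does not break your proof: those terms carry the explicit prefactor $\ep$, and $\ep\le\ep^{\frac34-\frac n2}$ for every $n\ge0$, which is precisely how the paper treats them by retaining a separate $C(1+\ov{\al})\ep$ contribution in \refEAnd{pf1}{pf2}. With that one correction, your derivation of \refE{31} is complete.
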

\begin{proof}
The assumptions \refE{0-6} give the regularity of $u^0$ in \refE{0-7}, and since $m \ge 5$, this allows $k$ to be at least as large as $2$.
To prove the lemma, using \refEThrough{22}{25}, we first notice that it is sufficient to verify \refE{31} with $\tht^{\ep}_i$ replaced by $\tht^{\ep}_{i,L}$, $1 \leq i \leq 3$.

For \refE{31}$_1$ with $\tht^{\ep}_{i,L}$, using \refE{24}$_1$, we write
\begin{equation}\label{e:pf0}
\dfrac{\pa^{l+k} \tht^{\ep}_{i, L}}{\pa t^l \pa \tau^k } = -\ep^{\frac{1}{2}} \dfrac{\pa^{l+k} \widetilde{u}_{i,L}}{\pa t^l \pa \tau^k } \big(\sig_L(z) - \ep^{\frac{1}{2}} \sig^{\pri}_{L}(z)\big) e^{-\frac{z}{\sqrt{\ep}}}  -\ep \dfrac{\pa^{l+k} \widetilde{u}_{i,L}}{\pa t^l \pa \tau^k }\sig^{\pri}_{L}(z).
\end{equation}
Then, by differentiating \refE{pf0} $n$ times in the $z$ variable, and using \refEAnd{14}{22}, we find
\begin{equation}\label{e:pf1}
\begin{array}{ll}
\abs{\dfrac{\pa^{l+k+n} \tht^{\ep}_{i, L}}{\pa t^l \pa \tau^k \pa z^n}}
                                        %& \spacer \hspace{-2mm} \displaystyle \leq C (1 + \ov{\alpha})\ep^{\frac{1}{2}} \Big| \dfrac{\pa^m}{\pa z^m} \Big(   \sig_L(z) \exp\Big(  -\dfrac{z}{\sqrt{\ep}}\Big) + \sig_R(z) \exp\Big(  -\dfrac{h-z}{\sqrt{\ep}}\Big)\Big)\Big|\\
                                        & \hspace{-2mm} \displaystyle \leq C(1 + \ov{\alpha}) \ep^{\frac{1}{2}- \frac{n}{2}}  e^{-\frac{z}{\sqrt{\ep}}} + C (1 + \ov{\alpha}) \ep + e.s.t.,
\end{array}
\end{equation}
where $e.s.t.$ is a function (or a constant) whose norm in all Sobolev spaces $H^{s}$ (and thus spaces $\mathcal{C}^{s}$) is exponentially small with a bound of the form $c_{1}\exp(-c_{2}/\ep^{\gam})$, $c_{1}, c_{2}, \gam >0$, for each $s$. Hence, we find
\begin{equation}\label{e:pf2}\begin{array}{ll}
\norm{ \dfrac{\pa^{l+k+n} \tht^{\ep}_{i, L}}{\pa t^l \pa \tau^k \pa z^n}}_{L^{\infty}(0,T;L^{2}(\Omega))}
                                         & \hspace{-3mm} \displaystyle \leq C(1 + \ov{\alpha}) \ep^{\frac{1}{2}- \frac{n}{2}} \Big( \int_{0}^{h} e^{-\frac{2z}{\sqrt{\ep}}} \, dz \Big)^{\frac{1}{2}} + C (1 + \ov{\alpha}) \ep\\
                                         & \spacer \hspace{-3mm} \displaystyle \leq (\text{setting $z^{\pri} = z/\sqrt{\ep}$})\\
                                         & \spacer \hspace{-3mm} \displaystyle \leq C (1 + \ov{\alpha}) \ep^{\frac{3}{4}- \frac{n}{2}} \Big( \int_{0}^{\infty} e^{-2 z^{\pri}} \, dz^{\pri} \Big)^{\frac{1}{2}} + C (1 + \ov{\alpha}) \ep\\
                                         & \hspace{-3mm} \displaystyle \leq C (1 + \ov{\alpha}) \ep^{\frac{3}{4}- \frac{n}{2}} , \text{ for  } l,k,n \geq0.
\end{array}
\end{equation}

To prove \refE{31}$_2$ with $\tht^{\ep}_{3,L}$, using \refE{25}$_1$, we write
\begin{equation*}\label{e:pf3}
\dfrac{\pa^{l+k} \tht^{\ep}_{3, L}}{\pa t^l \pa \tau^k } = \ep \dfrac{\pa^{l+k} }{\pa t^l \pa \tau^k } \Big(  \dfrac{\pa \widetilde{u}_{1, L}}{\pa x}+ \dfrac{\pa \widetilde{u}_{2, L}}{\pa y} \Big)  \sig_L(z) \Big(1 - e^{- \frac{z}{\sqrt{\ep}}} \Big).
\end{equation*}
Hence, \refE{31}$_2$ with $\tht^{\ep}_{3,L}$ follows by applying exactly the same computations as \refEAnd{pf1}{pf2}, and  the proof of \refL{2} is complete.
\end{proof}

We define a continuous piecewise linear function, $\zeta(z)$, by
\begin{equation}\label{e:zeta}\begin{array}{ll}
\zeta(z) : = \left\{  \begin{array}{ll}
                                        z, & 0 \leq z \leq h/4,\\
                                        h/4, & h/4 \leq z \leq 3h/4,\\
                                        h-z, & 3h/4 \leq z \leq h.
\end{array}\right.
\end{array}
\end{equation}
Then, using the analog of the proof of Lemma \ref{L:2}, one can verify that, $i=1,2$,
\begin{equation}\label{e:special_est_Channel}
\norm{ \dfrac{\zeta(z)}{\sqrt{\ep}} \dfrac{\pa \tht^{\ep}_i}{\pa z}}_{L^{\infty}(0,T;L^{2}(\Omega))} \leq C (1 + \ov{\alpha})\ep^{\frac{1}{4}}  ,
\quad
\norm{ \dfrac{\zeta(z)}{\sqrt{\ep}} \dfrac{\pa \tht^{\ep}_3}{\pa z}}_{L^{\infty}(0,T;L^{2}(\Omega))} \leq C (1 + \ov{\alpha})\ep^{\frac{1}{2}}.
\end{equation}

%--- Subsection
\subsection{Error analysis}\label{S:S2.3} We set the remainder:
\begin{equation}\label{e:40}
w^{\ep} : = u^{\ep } - u^0 - \tht^{\ep}. %- \ep \varphi.
\end{equation}
Then, using \refEThrough{1}{4} with \refEAndAnd{14}{27}{40}, the equations for $w^{\ep}$ read
\begin{equation}\label{e:41}\left\{\begin{array}{l}
            \spacer
            \dfrac{\pa w^{\ep}}{\pa t} -\ep \Delta w^{\ep} + \nabla \big(p^{\ep}-p^0\big)
                        = \ep \Delta u^0 + R_{\ep}(\tht^{\ep})  - J_{\ep}(u^\ep, u^0), \text{ in } \Omega_{\infty}\times(0,T) ,\\
            \spacer
            \text{div }w^{\ep} = 0, \text{ in } \Omega_{\infty} \times(0,T),\\
            \spacer
            \text{$w^{\ep}$ is periodic in $x$ and $y$ directions with periods $L_1$ and $L_2$},\\
            \spacer
             w^{\ep}_3 = 0, \text{ at } z=0, h,\\
             \spacer \displaystyle
             \dfrac{\pa w^{\ep}_i}{\pa z} - 2  \sum_{j=1}^2 \alpha_{ij} w^{\ep}_j
                        = 2 \sum_{j=1}^2 \alpha_{ij} \tht^{\ep}_j, \text{ } i=1,2, \text{ at } z=0,\\
             \spacer \displaystyle
             \dfrac{\pa w^{\ep}_i}{\pa z} + 2\sum_{j=1}^2 \alpha_{ij} w^{\ep}_j
                        = -2 \sum_{j=1}^2 \alpha_{ij} \tht^{\ep}_j, \text{ } i=1,2, \text{ at } z=h,\\
             w^{\ep}| _{t=0} = - \tht^{\ep}| _{t=0}, \text{ in } \Omega_{\infty},
\end{array}
\right.
\end{equation}
where
\begin{align}\
	R_{\ep}(v) &:= -\dfrac{\pa v}{\pa t} + \ep \Delta v, \text{ for any smooth vector field }v, \label{e:42} \\
	J_{\ep}(u^\ep, u^0) &:= (u^{\ep}\cdot\nabla)u^{\ep} - (u^{0}\cdot\nabla)u^{0}. \label{e:43}
\end{align}
We multiply \refE{41}$_1$ by $w^{\ep}$, integrate over $\Omega$ and then, integrate it by parts . As a result, after applying the Schwarz and Young inequalities as well, we find:
\begin{equation}\label{e:44}
	\begin{split}
		\dfrac{d}{d t} &\norm{w^{\ep}}_{L^2(\Omega)}^2 + 2 \ep \norm{\nabla w^{\ep}}_{L^2(\Omega)}^2
			\leq \ep^2\norm{\Delta u^0}_{L^2(\Omega)}^2
				+ \norm{R_{\ep}(\tht^{\ep})}_{L^2(\Omega)}^2 + 2 \norm{w^{\ep}}_{L^2(\Omega)}^2 \\
			& + 2 \ep \int_{\{z=0,h\}} \big( \nabla w^{\ep} \n \big)\cdot w^{\ep} \, dS
				- 2 \int_{\Omega} J_{\ep}(u^{\ep}, u^0) \cdot w^{\ep} \, d \blds{x}.
	\end{split}
\end{equation}
%From \refE{39}) and \refE{42}) with $v$ replaced by $\varphi$, we infer that
%\begin{equation}\label{e:48}
%|R_{\ep}(\varphi)|_{L^2(\Omega)}^2 \leq \kap_T (1+\alpha^2).
%\end{equation}
Thanks to \refL{2} and \refE{42} with $v$ replaced by $\tht^{\ep}$, we find that
\begin{equation}\label{e:49}
	\norm{R_{\ep}(\tht^{\ep})}_{L^2(\Omega)}^2
		\leq \kap_T (1+ \ov{\alpha}^2) \ep^{\frac{3}{2}}.
\end{equation}
On the other hand, by remembering that $\n = (0,0,-1)$ at $z=0$ and $\n = (0,0,1)$ at $z=h$ , we notice that
\begin{equation}\label{e:46}\begin{array}{ll}
\big[ \nabla w^{\ep} \n\big]_{\text{tan}}
                                                        %& \spacer \hspace{-2mm} : = (\text{the tangential part of $\mathbf{S}(w^{\ep}) \n$})\\
                                                        & \spacer \hspace{-2mm} = \left\{\begin{array}{ll}
                                                                        \spacer - \Big(\dfrac{ \pa w^{\ep}_1}{\pa z} , \hspace{2mm}  \dfrac{ \pa w^{\ep}_2}{\pa z} \Big), & \text{ at } z=0,\\
                                                                        \Big(  \dfrac{\pa w^{\ep}_1}{\pa z} , \hspace{2mm} \dfrac{ \pa w^{\ep}_2}{\pa z} \Big), & \text{ at } z=h,\\
                                                                        \end{array}\right.\\
                                                        & \hspace{-2mm} = (\text{using \refE{41}$_{5,6}$})\\
                                                        & \hspace{-2mm} \displaystyle
                                                                    = -2 \Big( \sum_{j=1}^{2} \alpha_{1j} (w^{\ep}_j+ \tht^{\ep}_j), \;
                                                                            \sum_{j=1}^{2} \alpha_{2j} (w^{\ep}_j+ \tht^{\ep}_j) \Big), \text{ at } z=0,h.
\end{array}
\end{equation}
Then, using \refE{46}, we find that
\begin{align}\label{e:51}
	\begin{split}
		&2 \ep \Big| \int_{\{z=0,h\}} \big( \nabla w^{\ep} \n \big)\cdot w^{\ep} \, dS \Big|
			\le \kap_T \ov{\alpha} \ep \norm{ [w^{\ep} + \tht^{\ep}]_{\text{tan}}}_{L^2(\Gamma)}
				\norm{[w^{\ep}]_{\text{tan}}}_{L^2(\Gamma)} \\
			&\le \kap_T \ov{\alpha} \ep \norm{w^{\ep}}_{L^2(\Gamma)}^2
				+ \kap_T \ov{\alpha} \ep \norm{[\tht^{\ep}]_{\text{tan}}}_{L^2(\Gamma)}
				\norm{w^{\ep}}_{L^2(\Gamma)} \\
			&\le (\text{using \refL{trace-like}, \refE{24}, and the Poincar\'{e} inequality}) \\
			&\le \kap_T \ov{\alpha} \ep \norm{w^{\ep}}_{L^2(\Omega)}
				\norm{\nabla w^{\ep}}_{L^2(\Omega)} + \kap_T (1 + \ov{\alpha}^2) \ep^{\frac{3}{2}}
				\norm{\nabla w^{\ep}}_{L^2(\Omega)} \\
			&\le \ep \norm{\nabla w^{\ep}}_{L^2(\Omega)}^2
				+ \kap_T \ov{\alpha}^2  \ep \norm{w^{\ep}}_{L^2(\Omega)}^2
				+ \kap_T ( 1 + \ov{\alpha}^4 ) \ep^2.
	\end{split}
\end{align}
By applying \refEAnd{49}{51} to \refE{44}, we obtain
\begin{align}\label{e:52}
	\begin{split}
	\dfrac{d}{d t} &\norm{w^{\ep}}_{L^2(\Omega)}^2
			+  \ep \norm{\nabla w^{\ep}}_{L^2(\Omega)}^2 \\
                  &\leq  \kap_T (1 + \ov{\alpha}^4) \ep^{\frac{3}{2}}
                    	+ \kap_T(1 + \ov{\alpha}^2 ) \norm{w^{\ep}}_{L^2(\Omega)}^2
			- 2 \int_{\Omega} J_{\ep}(u^{\ep}, u^0) \cdot w^{\ep} \, d \blds{x}.
	\end{split}
\end{align}
To estimate the last term on the right-hand side of \refE{52}, using \refEAnd{40}{43}, we first notice that
\begin{equation}\label{e:53}
J_{\ep}(u^{\ep}, u^0) =
             (u^{\ep}\cdot \nabla)w^{\ep} + (w^{\ep} \cdot \nabla)(u^{\ep} - w^{\ep}) + (u^{0}\cdot \nabla)\tht^{\ep} + (\tht^{\ep}\cdot \nabla)u^{0} + (\tht^{\ep}\cdot \nabla)\tht^{\ep}.
\end{equation}
Then, we write:
\begin{equation}\label{e:54}
 \int_{\Omega} J_{\ep}(u^{\ep}, u^0) \cdot w^{\ep} \, d \blds{x} := \sum_{j=1}^{5} \mathcal{J}_{\ep}^j ,
\end{equation}
where
\begin{align}\label{e:55}
	\left\{
	\begin{array}{ll}
		\displaystyle
		\mathcal{J}_{\ep}^1
			= \int_{\Omega} (u^{\ep}\cdot \nabla)w^{\ep}  \cdot w^{\ep} \, d \blds{x} = 0,
		&
		\displaystyle
		\mathcal{J}_{\ep}^2
			= \int_{\Omega} (w^{\ep} \cdot \nabla)(u^{\ep} - w^{\ep}) \cdot w^{\ep} \, d \blds{x}, \\
		\\
		\displaystyle
		\mathcal{J}_{\ep}^3
			= \int_{\Omega} (\tht^{\ep} \cdot \nabla) u^0  \cdot w^{\ep} \, d \blds{x},
		&
		\displaystyle
		\mathcal{J}_{\ep}^4
			= \int_{\Omega} (u^{0}\cdot \nabla)\tht^{\ep}  \cdot w^{\ep} \, d \blds{x}, \\
		\\
		\displaystyle
		\mathcal{J}_{\ep}^5
			= \int_{\Omega}   (\tht^{\ep}  \cdot \nabla )\tht^{\ep}   \cdot w^{\ep} \, d \blds{x}.
	\end{array}
	\right.
\end{align}
To bound $\mathcal{J}_{\ep}^2$, using \refE{40}, we first write
\begin{align*}
\mathcal{J}_{\ep}^2 = \int_{\Omega} (w^{\ep} \cdot \nabla) u^0  \cdot w^{\ep} \, d \blds{x} +
\int_{\Omega} (w^{\ep} \cdot \nabla)\tht^{\ep} \cdot w^{\ep} \, d \blds{x}.
\end{align*}
Then
\begin{align*}
\Big|\int_{\Omega} (w^{\ep} \cdot \nabla) u^0  \cdot w^{\ep} \, d \blds{x} \Big|
                    \leq \kap_T \norm{\nabla u^0}_{L^{\infty}(\Omega)}
                    \norm{w^{\ep}}_{L^2(\Omega)}^2
                    \leq \kap_T \norm{w^{\ep}}_{L^2(\Omega)}^2
\end{align*}
and, thanks to \refEAnd{bvofc1}{bvofc2},
\begin{align*}
	\Big|\int_{\Omega} (w^{\ep} \cdot \nabla)\tht^{\ep} \cdot w^{\ep} \, d \blds{x} \Big|
		\leq \kap_T \norm{\nabla \tht^{\ep}}_{L^{\infty}(\Omega)}
			\norm{w^{\ep}}_{L^2(\Omega)}^2
			\leq \kap_T (1 + \ov{\alpha})  \norm{w^{\ep}}_{L^2(\Omega)}^2.
\end{align*}
Thus,
\begin{equation}\label{e:60}
	\norm{\mathcal{J}_{\ep}^2}
		\leq \kap_T (1 + \ov{\alpha}) \norm{w^{\ep}}_{L^2(\Omega)}^2.
\end{equation}
Using \refL{2}, we bound $\mathcal{J}_{\ep}^3$ by
\begin{equation}\label{e:61}
	\begin{split}
		\abs{\mathcal{J}_{\ep}^3}
                            & \leq \norm{\nabla u^0}_{L^{\infty}(\Omega)}
                            	\norm{\tht^{\ep}}_{L^2(\Omega)}
				\norm{w^{\ep}}_{L^2(\Omega)}
                            \leq \kap_T \norm{\tht^{\ep}}_{L^2(\Omega)}
                            	\norm{w^{\ep}}_{L^2(\Omega)} \\
                            & \leq \kap_T (1+ \ov{\alpha})\ep^{\frac{3}{4}}
                            		\norm{w^{\ep}}_{L^2(\Omega)}
                            \leq \kap_T (1 + \ov{\alpha}^2) \ep^{\frac{3}{2}}
                            + \norm{w^{\ep}}_{L^2(\Omega)}^2.
	\end{split}
\end{equation}
Since $u^0_3$ vanishes at $z=0 \text{ or } h$, using the regularity of $u^0$, we bound $\mathcal{J}_{\ep}^4$ by
\begin{equation}\label{e:62}
	\begin{split}
	\abs{\mathcal{J}_{\ep}^4}
			&\leq \sum_{j=1}^{3}
                            	\Big| \int_{\Omega} \Big( u^0_1\dfrac{\pa \tht^{\ep}_j}{\pa x}
					+ u^0_2\dfrac{\pa \tht^{\ep}_j}{\pa y} \Big) w^{\ep}_j \, d \blds{x}
				\Big|
				+ \sum_{j=1}^{3}
				\Big| \int_{\Omega}  u^0_3 \dfrac{\pa \tht^{\ep}_j}{\pa z} w^{\ep}_j \, d \blds{x}
				\Big|\\
			&\leq \norm{u^0}_{L^{\infty}(\Omega)} \sum_{j=1}^{3}
                            	\norm{\dfrac{\pa \tht^{\ep}_j}{\pa x}
					+ \dfrac{\pa \tht^{\ep}_j}{\pa y}}_{L^2(\Omega)}
				\norm{w^{\ep}_j}_{L^2(\Omega)} \\
			&\qquad\qquad
				+ \ep^{\frac{1}{2}} \sum_{j=1}^{3}
				\norm{\dfrac{u^0_3}{\zeta(z)}}_{L^{\infty}(\Omega)}
				\norm{\dfrac{\zeta(z)}{\sqrt{\ep}} \dfrac{\pa \tht^{\ep}_j}{\pa z}}_{L^2(\Omega)}
				\norm{w^{\ep}_j}_{L^2(\Omega)}  \\
			&\leq (\text{using \refEFirst{zeta}\refELast{special_est_Channel} and \refL{2}})\\
			&\leq \kap_T (1 + \ov{\alpha}) \ep^{\frac{3}{4}} \norm{w^{\ep}}_{L^2(\Omega)}
			\leq \kap_T (1 + \ov{\alpha}^2)\ep^{\frac{3}{2}}
                            	+ \norm{w^{\ep}}_{L^2(\Omega)}^2.
	\end{split}
\end{equation}
%For the term $\mathcal{J}_{\ep}^5$, we write
%\begin{equation}\label{e:62-1}
%\mathcal{J}_{\ep}^5 = \int_{\Omega}  (\tht^{\ep} \cdot \nabla )\tht^{\ep}   \cdot w^{\ep} \, d\Omega + \ep \int_{\Omega}  (\varphi \cdot \nabla )\tht^{\ep}   \cdot w^{\ep} \, d\Omega.
%\end{equation}
Thanks to \refEAnd{bvofc1}{bvofc2} and \refL{2}, was can bound $\mathcal{J}_{\ep}^5$ by
\begin{equation}\label{e:62-2}
	\begin{split}
	&\abs{\mathcal{J}_{\ep}^5}
		\leq \sum_{j=1}^{3} \Big| \int_{\Omega} \Big( \tht^{\ep}_1
			\dfrac{\pa \tht^{\ep}_j}{\pa x}
			+ \tht^{\ep}_2 \dfrac{\pa \tht^{\ep}_j}{\pa y} \Big)
			w^{\ep}_j \, d \blds{x} \Big|
			+ \sum_{j=1}^{3} \Big| \int_{\Omega} \tht^{\ep}_3
			\dfrac{\pa \tht^{\ep}_j}{\pa z} w^{\ep}_j \, d \blds{x} \Big| \\
		&\quad\leq \sum_{j=1}^{3}
			\Big\{ \norm{\tht^{\ep}_1}_{L^{\infty}(\Omega)}
				\norm{\dfrac{\pa \tht^{\ep}_j}{\pa x}}_{L^2(\Omega)}
				\norm{w^{\ep}_j}_{L^2(\Omega)}  +
				\norm{\tht^{\ep}_2}_{L^{\infty}(\Omega)}
				\norm{\dfrac{\pa \tht^{\ep}_j}{\pa y}}_{L^2(\Omega)}
				\norm{w^{\ep}_j}_{L^2(\Omega)} \Big\} \\
		&\qquad\qquad
                             + \sum_{j=1}^{3} \norm{\tht^{\ep}_3}_{L^{\infty}(\Omega)}
                             	\norm{ \dfrac{\pa \tht^{\ep}_j}{\pa z}}_{L^{2}(\Omega)}
				\norm{w^{\ep}_j}_{L^2(\Omega)}\\
		&\quad\leq \kap_T (1 + \ov{\alpha}^2) \ep^{\frac{5}{4}} \norm{w^{\ep}}_{L^2(\Omega)}
		\leq \kap_T (1 + \ov{\alpha}^4) \ep^{\frac{5}{2}} + \norm{w^{\ep}}^2_{L^2(\Omega)}.
	\end{split}
\end{equation}
%and
%\begin{equation}\label{e:62-3}\begin{array}{ll}
%\displaystyle\ep \Big|\int_{\Omega}  (\varphi \cdot \nabla )\tht^{\ep}   \cdot w^{\ep} \, d\Omega\Big|
%                            & \displaystyle \spacer \hspace{-2mm} \leq \ep  \normmark \varphi \normmark _{L^{\infty}(\Omega)} \normmark \nabla \tht^{\ep} \normmark _{L^2(\Omega)} \normmark w^{\ep}\normmark _{L^2(\Omega)}\\
%                            & \displaystyle \spacer \hspace{-2mm} \leq (\text{using \refE{39}) and \refL{2}})\\
%                            & \spacer \hspace{-2mm} \leq \kap_T (1 + \alpha^2)\ep^{\frac{5}{4}}  \norm{w^{\ep}}_{L^2(\Omega)} \\
%                            & \hspace{-2mm} \leq \kap_T (1 + \alpha^4)\ep^{\frac{5}{2}} + |w^{\ep}|^2_{L^2(\Omega)}.
%\end{array}
%\end{equation}
%From \refE{62-1})-\refE{62-3}), the estimate on $ \mathcal{J}_{\ep}^5$ follows:
%\begin{equation}\label{e:63}
%\normmark  \mathcal{J}_{\ep}^5 \normmark
%\leq \kap_T (1 + \alpha^4)\ep^{\frac{5}{2}} + 2\norm{w^{\ep}}_{L^2(\Omega)}^2.
%\end{equation}
%Due to \refE{39}), the term $\mathcal{J}_{\ep}^6$, appearing in \refE{55})$_6$, is easy to estimate:
%\begin{equation}\label{e:64}\begin{array}{ll}
%\normmark  \mathcal{J}_{\ep}^6 \normmark
%                            & \spacer \hspace{-2mm} \leq \kap_T(1 + \alpha^2)\ep \norm{w^{\ep}}_{L^2(\Omega)}\\
%                            & \hspace{-2mm} \leq \kap_T (1 + \alpha^4) \ep^2 + \norm{w^{\ep}}_{L^2(\Omega)}^2.
%\end{array}
%\end{equation}
Then, using \refEThrough{60}{62-2}, \refE{54} gives
\begin{equation}\label{e:65}
	\Big|  \int_{\Omega} J_{\ep}(u^{\ep}, u^0) \cdot w^{\ep} \, d \blds{x} \Big|
		\leq \kap_T  (1+\ov{\alpha}^4) \ep^{\frac{3}{2}}
			+ \kap_T (1 + \ov{\alpha} ) \norm{w^{\ep}}_{L^2(\Omega)}^2 .
\end{equation}
Applying \refE{65} to \refE{52}, we obtain
\begin{equation*}\label{e:66}
	\dfrac{d}{d t}\norm{w^{\ep}}_{L^2(\Omega)}^2
		+   \ep \norm{\nabla w^{\ep}}_{L^2(\Omega)}^2
		\leq \kap_T (1+ \ov{\alpha}^4)\ep^{\frac{3}{2}}
			+ \kap_T (1 + \ov{\alpha}^2) \norm{w^{\ep}}_{L^2(\Omega)}^2.
%\end{array}
\end{equation*}
Moreover, using \refEThrough{23-1}{25} and \refE{41}$_7$, we observe that
\begin{equation*}\label{e:non_comp_channel}
\normmark  w^{\ep}|_{t=0} \normmark _{L^2(\Omega)} = \normmark  \tht^{\ep}|_{t=0} \normmark _{L^2(\Omega)} \leq \kap_T (1 + \ov{\alpha})\ep^{\frac{1}{2}}
\normmark e^{-\frac{z}{\sqrt{\ep}}} \normmark_{L^2(\Omega)} + l.o.t. \leq \kap_T (1 + \ov{\alpha}) \ep^{\frac{3}{4}}.
\end{equation*}
Thanks to the Gronwall inequality, we finally have the bounds on the remainder, $w^{\ep}$,
\begin{align}\label{e:67}
	\norm{w^{\ep}}_{L^{\infty}(0,T; L^2(\Omega))}
		\le \kap(T, \ov{\alpha}, u_0, f) \ep^{\frac{3}{4}},
		\quad
	\norm{w^{\ep}}_{L^{2}(0,T; H^1(\Omega))}
		\le \kap(T, \ov{\alpha}, u_0, f) \ep^{\frac{1}{4}}.
\end{align}

%--- Subsection
\subsection{Proof of convergence}\label{S:S2.4}
Using \refE{40}, we first notice that
\begin{equation}\label{e:69}
|u^{\ep} - u^0| \leq |w^{\ep}| + |\tht^{\ep} | \text{ } \text{ pointwise in $\Omega_{\infty} \times (0,T)$}.
\end{equation}
Then, using \refE{67} and \refL{2}, \refE{68} follows from \refE{69}. \hspace{30mm}$\Box$

%
% Section
%
\section{Bounded domain}\label{S:S-3}
In this section we consider the Navier-Stokes equations, \refEAnd{0-1}{General_NBC}, and the Euler equations, \refE{Euler}, in a bounded domain $\Omega$ in $\mathbb{R}^3$ with boundary $\Gamma$ having regularity as in \refE{0-6}. To handle the geometric difficulties of a curved boundary, we must treat $\Omega$ as a manifold with boundary, first constructing charts on $\Gamma = \prt \Omega$ in a special way, as we describe below.

We consider the boundary, $\Gamma$ as a submanifold of $\R^3$. Then, since $\Gamma$ is a compact and smooth surface in $\R^3$, we construct a system of finitely many charts where each chart is a $C^m$-map, $\widetilde{\psi}$, from a domain, $\widetilde{U}$, in $\R^{2}$ to a domain, $\widetilde{V}$, in $\Gamma$. More precisely, we choose an orthogonal curvilinear system $(\xi') = (\xi_1, \xi_2)$ in $\widetilde{U}$ so that, for any point $\widetilde{\blds{x}}$ on $\widetilde{V} \subset \Gamma$, we write
\begin{equation}\label{e:boundary_chart}
\widetilde{\blds{x}} =  \widetilde{\psi}(\xi'), \quad \xi' = (\xi_1, \xi_2) \in \widetilde{U}.
\end{equation}
Differentiating (\ref{e:boundary_chart}) with respect to $\xi_i$, $i=1,2$, variables, we obtain the covariant basis on $\widetilde{U}$ and the metric tensor:
\begin{equation}\label{e:boundary_co_basis}
\widetilde{\blds{g}}_i(\xi') := \dfrac{\pa \widetilde{\blds{x}}}{\pa \xi_i}, \quad i=1,2,
\end{equation}
and
\begin{equation}\label{e:boundary_tensor}
\big(  \widetilde{g}_{i j}(\xi')\big)_{1 \leq i,j \leq 2}
    := \big( \widetilde{\blds{g}}_i \cdot \widetilde{\blds{g}}_j \big)_{1 \leq i,j \leq 2}
    = \text{diag}\big(\widetilde{\blds{g}}_1 \cdot \widetilde{\blds{g}}_1, \, \,  \widetilde{\blds{g}}_2 \cdot \widetilde{\blds{g}}_2  \big).
\end{equation}
Moreover, we see that the determinant of the metric tensor is strictly positive;
\begin{equation}\label{e:positivity of boundary tensor}
\widetilde{g}(\xi') : = \text{det}(\widetilde{g}_{i j}) >0, \text{ for all } \xi' \text{ in the closure of } \widetilde{U}.
\end{equation}

For any smooth 2d compact manifold $\Gamma$ in $\R^3$, one can construct a system of finitely many charts, which satisfy \refEAnd{boundary_tensor}{positivity of boundary tensor}. Hence, the class of domains under consideration in this article covers all smooth bounded domains in $\R^3$ with boundary having regularity as in \refE{0-6}. Moreover, as we will see below in \refS{Intrinsic}, the construction of the corrector is independent of our choice of charts. Hence, it is sufficient to restrict our attention to a single chart only, since any estimates we develop will apply equally to all of $\Omega$.

We define $\Gamma_c$ to be the interior tubular neighborhood of $\Omega$ with width $c$ for any sufficiently small $c > 0$, and let $a > 0$ be small enough that $\Gamma_{3 a}$ is such a tubular neighborhood. We can globally define the coordinate $\xi_3$ on $\Gamma_{3 a}$ to be distance from the boundary, with positive distances directed inward.

We fix the orientation of $\xi'$ variables on $\widetilde{V}$ so that
\begin{equation}\label{e:normal_vector}
\blds{n}(\xi')  : = -\dfrac{\widetilde{\blds{g}}_1 \times \widetilde{\blds{g}}_2}{|\widetilde{\blds{g}}_1 \times \widetilde{\blds{g}}_2|}(\widetilde{\psi}(\xi')),
\end{equation}
where $\blds{n}(\xi')$ is the unit outer normal vector on $\widetilde{V}$. Then, letting $U = \widetilde{U} \times (0, 3 a)$,  we define a chart $\psi \colon U \to V$ (giving what are sometimes called \textit{boundary normal coordinates}):
\begin{align}\label{e:x}
	\blds{x}
		= \psi(\blds{\xi})
		= \widetilde{\psi}(\xi') - \xi_3 \n(\xi'),
        \quad \blds{x} = (x_1, x_2, x_3) \in \Gamma_{3a}.
\end{align}
By differentiating $\psi$ in $\blds{\xi}$ variables and using \refE{boundary_co_basis}, we define the covariant basis of the curvilinear system $\blds{\xi}$:
\begin{align}\label{e:basis}
	\blds{g}_i (\blds{\xi})
		&= \widetilde{\blds{g}}_i(\xi') - \xi_3 \pdx{\blds{n}}{\xi_i} (\xi'), \; i = 1, 2,
	\quad
	\blds{g}_3 (\blds{\xi})
		= - \n(\xi');
\end{align}
hence, from \refEAnd{normal_vector}{basis}, we see that the covariant basis satisfies the right-hand rule.\\

One important observation here is that the orthogonality, on $\widetilde{V}$, of $\widetilde{\g}_i$, $i=1,2$, does not imply the orthogonality, in $V$, of $\g_i$, $i=1,2,3$. To see this, we first notice that
\begin{equation}\label{e:derivatives of normal vector}
\dfrac{\pa \n}{\pa \xi_i} = (\text{linear combination of $\widetilde{\g}_1$ and $\widetilde{\g}_2$}), \quad i=1,2.
\end{equation}
Thus, $\g_i \cdot \g_3 = 0$ for $i=1,2$, but $\g_1 \cdot \g_2 \neq 0$ in general. Consequently, the metric tensor $( g_{i j}(\blds{\xi}))_{1 \leq i,j \leq 3} := ( \blds{g}_i \cdot \blds{g}_j \big)_{1 \leq i,j \leq 3}$ satisfies:
\begin{equation}\label{e:206}
    \left\{
    \begin{array}{l}
        \spacer
        g_{i j} = 0 \quad i=3 \text { and } j=1,2, \text{ or } i=1,2 \text{ and } j=3,\\
        g_{3 3} = 1.
    \end{array}
    \right.
\end{equation}
Moreover, thanks to (\ref{e:positivity of boundary tensor}), by choosing the thickness $3a>0$ of the tubular neighborhood $\Gamma_{3a}$  small enough, we see that
\begin{equation}\label{e:positivity of tensor}
g(\blds{\xi}) := \text{det}(g_{i j})_{1 \leq i,j \leq 3} > 0  \text{ for all } \blds{\xi} \text{ in the closure of } U=\widetilde{U}\times(0, 3a);
\end{equation}
The function, $\sqrt{g} := g^{1/2}$, is the magnitude of the Jacobian determinant of  the chart, $\psi$.

The matrix of the contravariant metric components are defined in the closure of $U$ as well:
\begin{equation}\label{e:cont_tensor}
    (g^{ij})_{1 \leq i,j \leq 3}
        = (g_{ij})^{-1}_{1 \leq i,j \leq 3}
        = \dfrac{1}{g}
        \left(
                \begin{array}{ccc}
                        g_{22}  &   -g_{12} &   0\\
                        -g_{12} &   g_{11}  &   0\\
                        0   & 0  & 1
                \end{array}
        \right).
\end{equation}

We introduce the normalized covariant vectors:
\begin{equation}\label{e:207}\begin{array}{ll}
\boldsymbol{e}_{i} = \dfrac{\boldsymbol{g_{i}}}{|\boldsymbol{g}_{i}|}, \text{ } 1 \leq i \leq 3.
\end{array}
\end{equation}
%Then, for a smooth scalar function, $\Phi$, defined on $U$, we write the gradient of $\Phi$ in the $\boldsymbol{\xi}$ variable in the form (see e.g. \cite{Cia05})
%\begin{equation}\label{e:209}
%\nabla \Phi = \sum_{i, j=1}^{2} g^{j i} \sqrt{g_{i i}} \dfrac{\pa \Phi}{\pa\xi_{j}}\boldsymbol{e}_{i} + \dfrac{\pa \Phi}{\pa\xi_{3}}\boldsymbol{e}_{3}.
%\end{equation}
Then, for a vector valued function $F$, defined on $U$, in the form
\begin{equation*}\label{e:210}
F = \sum_{i=1}^{3} F_{i}\boldsymbol{e}_{i} ,
\end{equation*}
one can classically express the divergence
 %and the curl
operator acting on $F$ in the $\boldsymbol{\xi}$ variable (see \cite{Cia05} or \cite{Klingenberg1978}) as
\begin{align}\label{e:211}
	\dv  F
		= \frac{1}{\sqrt{g}} \sum_{i=1}^2 \pdx{}{\xi_i} \pr{\frac{\sqrt{g}}{\sqrt{g_{ii}}} F_{i}}
			+ \frac{1}{\sqrt{g}} \pdx{(\sqrt{g} F_3)}{\xi_3}.
\end{align}

We write the Laplacian of $F$ as
%\begin{equation}\label{e:213}
%\Delta F = \nabla(\dv F) - \text{curl}(\text{curl }F),
%\end{equation}
%and, thanks to \refE{209} and \refEThrough{211}{213}, we find
\begin{equation}\label{e:214}
\Delta F = \sum_{i=1}^{3} \Big(\mathcal{S}^{i}F + \mathcal{L}^{i}F_{i} + \dfrac{\pa^{2}F_{i}}{\pa \xi_{3}^{2}} \Big) \blds{e}_i,
\end{equation}
where
\begin{equation}\label{e:215}\left\{\begin{array}{l}
\spacer
\mathcal{S}^{i} F= \Big( \begin{array}{l}
                                                    \text{linear combination of tangential derivatives}\\
                                                    \text{of $F_j$, $1\leq j \leq 3$, in $\xi'$, up to order 2}
                                                    \end{array} \Big),\\
\mathcal{L}^{i} F_{i} = \Big(\text{proportional to } \dfrac{\pa F_i}{\pa \xi_3}\Big).
\end{array}\right.
\end{equation}

\begin{rem}\label{R:201}
Note that the coefficients of $\mathcal{S}^{i}$ and $\mathcal{L}^{i}$, $1\leq i \leq3$ in \refE{215}, are
multiples of $\sqrt{g}$ , $1/\sqrt{g}$, $\sqrt{g_{ii}}$ , $1/\sqrt{g_{ii}}$, $i=1,2$, $g_{12}$, $g_{21}$, and their derivatives. Thanks to \refE{positivity of tensor}, all these quantities are  well-defined because of the regularity assumed for $\Gamma$ in \refE{0-6}.
\end{rem}

\begin{rem}\label{R:No mixing term}
Thanks to \refE{206}, we notice that the tangential directions are perpendicular to the normal direction in the tubular neighborhood $\Gamma_{3a}$. Indeed this property enables us to obtain the expression of Laplacian as \refEAnd{214}{215}, which is essentially the same as for the case of orthogonal curvilinear system. The explicit expression of Laplacian in orthogonal system appears in, e.g., \cite{GHT2}.
\end{rem}

For smooth vector fields $F, \text{ } G : U \rightarrow \mathbb{R}^3$, we consider $\nabla_{F} G$, the covariant derivative of $G$ in the direction $F$, which gives $F \cdot \nabla G$ in the Cartesian coordinate system. More precisely, we consider the smooth functions $F$ and $G$ in the form
\begin{equation*}\label{e:217}\begin{array}{lll}
\displaystyle F = \sum_{i=1}^{3} F_{i} \boldsymbol{e}_i, && \displaystyle G = \sum_{i=1}^{3} G_{i} \boldsymbol{e}_i.
\end{array}
\end{equation*}
Then, one can write $\nabla_{F} G$ in the $\boldsymbol{\xi}$ variable,\\

\begin{equation}\label{e:218}
\nabla_{F} G  = \sum_{i=1}^{3} \Big\{\mathcal{P}_i \Big(F_1, F_2 : \dfrac{\pa G_i}{\pa \xi_1}, \dfrac{\pa G_i}{\pa \xi_2}\Big)
                                + F_3 \dfrac{\pa G_i}{\pa \xi_3}
                                + \mathcal{Q}_i (F : G) \Big\}\blds{e}_i,
\end{equation}
where
\begin{equation}\label{e:219}
\left\{\begin{array}{l}
\spacer
\mathcal{P}_i \Big(F_1, F_2 : \dfrac{\pa G_i}{\pa \xi_1}, \dfrac{\pa G_i}{\pa \xi_2}\Big)
            = \left(
                        \begin{array}{l}
                        \text{product of a linear combination of}\\
                        \text{the tangential components $F_1$, $F_2$ of $F$,}\\
                        \text{and sum of tangential derivatives of $G_i$}
                        \end{array}
                \right),\\
\mathcal{Q}_i (F : G)
            = (\text{linear combination of the products $F_jG_k$, $1 \leq j,k \leq 3$ }).
\end{array}\right.
\end{equation}
\begin{rem}\label{R:Convective}
$\mathcal{Q}_i (F : G)$, $1 \leq i \leq 3$, are related to the Christoffel symbols of the second kind, which comes from the twisting effects of the curvilinear system $\boldsymbol{\xi}$. For the case of an orthogonal system, the explicit expression of \refE{218} is given in Appendix 2 of \cite{Bat99}.
\end{rem}

Using the expression of contravariant components of the strain rate tensor, and by remembering, from \refEFirst{boundary_tensor}\refENext{basis}\refELast{206}, that the covariant basis (and hence the normalized covariant basis) is triply orthogonal on $\Gamma$, we write the generalized Navier boundary conditions, \refE{General_NBC}, for $F = \sum_{i=1}^{3} F_i \blds{e}_i$ as
\begin{equation}\label{e:219-9}\left\{\begin{array}{l}
        \spacer
        F_3 = 0, \text{ at } \xi_3 =0,\\
        \displaystyle
        - \dfrac{1}{2} \dfrac{\pa F_i}{\pa \xi_3} + \mathcal{M}_i\Big(F_i, \dfrac{\pa F_3}{\pa \xi_i}\Big)
        + \sum_{j=1}^{2}\alpha_{ij} F_j = 0, \text{ in } \blds{e}_i \text{ direction at } \xi_3 =0,  \text{ } i=1,2,
\end{array}\right.
\end{equation}
where
\begin{equation}\label{e:219-10}
%	\begin{array}{ll}
%	\mathcal{M}_i(F)
%                    & \spacer \hspace{-2mm}  =
\mathcal{M}_i\Big(F_i, \dfrac{\pa F_3}{\pa \xi_i} \Big)
%\\
%                    & \hspace{-2mm}
= \Big(
                                \begin{array}{l}
                                        \text{linear combination of the tangential component $F_i$ and}\\
                                        \text{the derivative in $\xi_i$ of the normal component $F_3$}
                                \end{array}\Big).
%	\end{array}
\end{equation}

\begin{rem}\label{R:NavierBC}
Thanks to \refEAnd{0-6}{positivity of tensor}, the coefficients of $\mathcal{M}_i(F)$, $i=1,2$, are well-defined. Concerning an orthogonal system, the explicit expression of $\mathcal{M}_i(F)$, $i=1,2$, appears on p. 115 of \cite{MF53}.
\end{rem}

%--- Subsection
\subsection{The corrector}\label{S:S3.1}
In defining the corrector, $\theta^\eps$, we parallel the strategy we used in \refS{S-2} for a periodic channel domain as closely as possible,
employing an asymptotic expansion, $u^{\ep} \simeq u^0 + \tht^{\ep}$, as in \refE{12}, but adapting the corrector to the curved boundary.
%We construct the corrector in each chart, then show that because of the way we chose the charts, the corrector is uniquely defined throughout $\Gamma_{3a}$ and ultimately  $\Omega$.

With the unit vectors, $\blds{e}_i$, defined as in \refE{207}, on $U$, $\theta^\eps$ can be written
\begin{equation}\label{e:221}
	\tht^\eps
		:= \sum_{i=1}^{3} \tht^\eps_i \blds{e}_i.
\end{equation}
The tangential components $\tht^{\ep}_{i}$, $i=1,2$, will be constructed to correct the tangential discrepancy in the boundary conditions related to the normal derivative of $u^{\ep} - u^0$ on the boundary. Then the normal component $\tht^{\ep}_3$ will be deduced from the divergence-free condition on $\tht^{\ep}$.

To define the corrector $\tht^{\ep}$ appearing in \refE{221}, since $u^0 \cdot \n =0$ on $\Gamma$, we first set
\begin{align*}
	\widetilde{u}
		= 2 \pr{[\mathbf{S}(u^0)  \n ]_{\text{tan}} + \mathcal{A} u^0} ,
\end{align*}
\Ignore{ % Older form, in coordinates
\begin{align*}
	\widetilde{u} (\xi' ; t)
		= 2 \Big(   [\mathbf{S}(u^0)  \n ]_{\text{tan}}\big|_{\widetilde{V}} + \mathcal{A} u^0 |_{\widetilde{V}} \Big) ,
\end{align*}
} % End Ignore
defined on all of $\Gamma$, and write, in coordinates,
\begin{equation}\label{e:224}
    \widetilde{u} (\xi'; t)  = \sum_{i=1}^2 \widetilde{u}_i (\xi'; t)  \blds{e}_i|_{\xi_3 = 0},
    \quad \quad
	\widetilde{u}_i(\xi' ; t)
		:= \widetilde{u} (\xi'; t) \cdot \blds{e}_i|_{\xi_3=0}.
%		=  2 \big(   \mathbf{S}(u^0)  \n + \alpha u^0 \big)  \cdot \blds{e}_i|_{\xi_3=0},
\end{equation}
Then, we insert the expansion $u^{\ep} \simeq u^0 + \tht^{\ep}$ into the generalized Navier boundary conditions, \refE{General_NBC}, and, thanks to \refE{219-9}$_2$, we find that, for $i=1,2$,
\begin{equation*}\label{e:222}
\dfrac{1}{2}\widetilde{u}_i(\xi' ; t) -  \dfrac{1}{2} \dfrac{\pa \tht^{\ep}_i}{\pa \xi_3} + \mathcal{M}_i\Big(\tht^{\ep}_i, \dfrac{\pa \tht^{\ep}_3}{\pa \xi_i} \Big) + \sum_{j=1}^2 \alpha_{ij} \tht^{\ep}_j  \simeq 0, \text{ at } \xi_3 = 0.
\end{equation*}
Using \refE{219-10}, we expect that $\pa \tht^{\ep}_i/\pa\xi_3 \gg \mathcal{M}_i(\tht^{\ep}_i, \pa\tht^{\ep}_3/\pa\xi_i)$ or $\sum_{j=1}^2 \alpha_{ij} \tht^{\ep}_j$, $i=1,2$, for smooth $\alpha_{ij}$, $1 \leq i,j \leq 2$, independent of $\ep$.
%We set
%\begin{align*}
%	\widetilde{u} (\xi' ; t)
%		= 2 \big(   \mathbf{S}(u^0)  \n + \alpha u^0 \big) ,
%\end{align*}
%which we note is a vector defined on $U$ that is independent of $\xi_3$. Then, by writing
%\begin{equation}\label{e:224}
%	\widetilde{u}_i(\xi' ; t)
%		:= \widetilde{u} (\xi'; t) \cdot \blds{e}_i|_{\xi_3=0}
%		=  2 \big(   \mathbf{S}(u^0)  \n + \alpha u^0 \big)  \cdot \blds{e}_i|_{\xi_3=0},
%\end{equation}
%we have $\widetilde{u} = \sum_{i=1}^{3} \widetilde{u}_i \blds{e}_i|_{\xi_3=0}$.
%Hence,
%% using Remark \ref{r:101} as well,
Hence, we use the Neumann boundary condition for $\tht^{\ep}_i$,
\begin{equation}\label{e:223}
	\dfrac{\pa \tht^{\ep}_i }{\pa \xi_3} \Big|_{\xi_3 =0 }
		= \widetilde{u}_i (\xi';t), \text{ } i=1,2.
\end{equation}

We can now model the corrector after the flat-space version in \refE{24}. We  define a smooth cutoff function, $\sig(\xi_3)$, with
\begin{equation}\label{e:225}
\sig(\xi_3) := \left\{\begin{array}{ll}
                            \spacer 1, & 0 \leq \xi_3 \leq a,\\
                            0, &  \xi_3 \geq 2a. \end{array}\right.
\end{equation}
Letting
\begin{align*}
	\gamma_i
		:= \gamma_i(\xi') = \frac{\sqrt{g}}{\sqrt{g_{ii}}} \Big|_{\xi_3 =0},
\end{align*}
we define the tangential components of the corrector by
\begin{align}\label{e:226}
	\theta^\eps_i (\blds{\xi} ; t)
		& := - \eps \frac{\sqrt{g_{ii}}(\blds{\xi})}{\sqrt{g}(\blds{\xi})}
			\brac{
				(\gamma_i \widetilde{u}_i)(\xi' ; t) \pdx{}{\xi_3}
				\pr{\sigma(\xi_3)
				\pr{1 - e^{-\frac{\xi_3}{\sqrt{\eps}}}}}
			},
			i = 1, 2.
\end{align}
It follows from \refE{211} that
\begin{align*}
	\sqrt{g} \dv \theta^\eps
		&= - \eps
			\sum_{i=1}^2 \pdx{}{\xi_i} (\gamma_i \widetilde{u}_i)(\xi' ; t) \pdx{}{\xi_3}
				\pr{\sigma(\xi_3)
				\pr{1 - e^{-\frac{\xi_3}{\sqrt{\eps}}}}}
			+ \pdx{(\sqrt{g} \theta^\eps_3)}{\xi_3} \\
		&= - \eps
			\pdx{}{\xi_3} \Big\{\sum_{i=1}^2 \pdx{}{\xi_i} (\gamma_i \widetilde{u}_i)(\xi' ; t) \sigma(\xi_3)
				\pr{1 - e^{-\frac{\xi_3}{\sqrt{\eps}}}} \Big\}
			+ \pdx{(\sqrt{g} \theta^\eps_3)}{\xi_3}.
\end{align*}
Thus, we can easily ensure that $\theta^\eps$ is divergence-free by letting
\begin{align}\label{e:227}
	\theta^\eps_3 (\blds{\xi} ; t)
		& := \ep \frac{1}{\sqrt{g}(\blds{\xi})} \Big\{\sum_{i=1}^2 \pdx{}{\xi_i} (\gamma_i \widetilde{u}_i)(\xi' ; t) \Big\}
                 \sigma(\xi_3) \pr{1 - e^{-\frac{\xi_3}{\sqrt{\eps}}}}.
\end{align}

It is easy to see that $\theta^\eps_3$ vanishes at $\xi_3 = 0$, and by differentiating \refE{226} and using \refE{225}, we see that each tangential component $\tht^{\ep}_i$, $i=1,2$, satisfies the boundary condition \refE{223} to within order $\sqrt{\eps}$:
\begin{align}\label{e:223Approx}
	\dfrac{\pa \tht^{\ep}_i}{\pa \xi_3}\Big|_{\xi_3 = 0}
		= \widetilde{u}_i (\xi' ; t)
			- \sqrt{\eps} E(\xi'; t),
\end{align}
where
\begin{align}\label{e:Error_Boundary}
	E(\xi'; t)
		= \gamma_i(\xi')
				\pdx{}{\xi_3} \pr{\frac{\sqrt{g_{ii}}}{\sqrt{g}}}\Big|_{\xi_3 = 0}
                \widetilde{u}_i(\xi' ; t).
\end{align}

Due to the presence of $\sigma$ in \refEAnd{226}{227}, we also have
\begin{equation}\label{e:229}
\dfrac{\pa^k \tht^{\ep}_{i}}{\pa\xi_{j}^k}\Big|_{\xi_3 \geq 2a} = 0, \text{ } 1\leq i, j \leq3, \text{ }k \geq0.
\end{equation}

\begin{remark}\label{R:FlatBoundaryCorrector}
For the case of a flat boundary $\Gamma$, one can choose curvilinear coordinates with the metric tensor $(g_{ij})_{1 \leq i,j \leq 3}$, defined in \refE{206}, as the identity matrix $\blds{I}_{3 \times 3}$, and hence $\sqrt{g}$, $\sqrt{g_{ii}}$ and $\gamma_i$, $i=1,2$, appearing in \refEFirst{226}\refENext{227}\refELast{Error_Boundary}, are equal to $1$. This implies that the expression of the corrector defined by \refEAnd{226}{227} are identical to \refEAnd{24}{25} in a channel domain where the error $E$ in \refE{Error_Boundary} is now equal to $0$.
\end{remark}

\begin{remark}
The form of our corrector \refEAnd{226}{227} is similar to the background flow in Lemma 1 of \cite{W1997A}.
\end{remark}

%
% Subsection
%
\subsection{The corrector in principal curvature coordinates}\label{S:Intrinsic}
In this section, we express the corrector in particularly convenient and geometrically meaningful coordinates called \textit{principal curvature coordinates}. 

We define an umbilical point of $\Gamma$ to be a point at which the principal curvatures, $\kap_1$ and $\kap_2$, are equal (this also includes what some authors refer to as a planar point, where both curvatures vanish). By Lemma 3.6.6 of \cite{Klingenberg1978}, in some neighborhood of any non-umbilical point there exists a chart in which the metric tensor of \refE{boundary_co_basis} is diagonal (as is the second fundamental form) and the coordinate lines are parallel to the principal directions at each point. Such a chart is also called a \textit{principal curvature coordinate system}.

For now, we assume that we are working in such a chart, $\widetilde{\psi}_p \colon \widetilde{U}_p \to \widetilde{V} \subseteq \Gamma$,
\begin{equation*}\label{e:PCC_boundary_chart}
\widetilde{\blds{x}} = \widetilde{\psi}_p(\eta'), \quad \eta' = (\eta_1, \eta_2) \in \widetilde{U}_p.
\end{equation*}
The corresponding covariant basis and metric tensor are
\begin{equation}\label{e:PCC_boudary_basis}
\widetilde{\blds{q}}_i (\eta') = \dfrac{\pa \widetilde{\psi}_{p}}{\pa \eta_i}, \text{ } i =1,2,
\;
(\widetilde{q}_{ij} (\eta') )_{1 \leq i,j \leq 2}
    = (\widetilde{\blds{q}}_i \cdot \widetilde{\blds{q}}_j )_{1 \leq i,j \leq 2}
    = \text{diag}(\widetilde{\blds{q}}_1 \cdot \widetilde{\blds{q}}_1, \, \widetilde{\blds{q}}_2  \cdot \widetilde{\blds{q}}_2 ).
\end{equation}

Using \refE{x} with $\xi'$ and $\widetilde{\psi}$ replaced by $\eta'$ and $\widetilde{\psi}_p$, we define a chart $\psi_p$ from $U_p = \widetilde{U}_p \times (0, 3a)$ into $\Gamma_{3a}$ by
\begin{equation*}\label{e:PCC_chart}
\blds{x} = \psi_p(\blds{\eta}) = \widetilde{\psi}_p(\eta') - \xi_3 \blds{n}(\eta'), \quad \blds{\eta} = (\eta', \xi_3) \in U_p.
\end{equation*}
As before, $\xi_3$ is the distance from the boundary, which we note does not depend upon the choice of the boundary chart.

In the principal curvature coordinate system on $\widetilde{U}_p$, the unit outer normal vector $\blds{n}$ satisfies
\begin{equation*}\label{e:PCC_derivatives of normal vector}
\dfrac{\pa \n}{\pa \eta_i} = \kap_i(\eta') \widetilde{\blds{q}}_i, \text{ } i=1,2.
\end{equation*}
Hence, differentiating $\psi_p$ in the $\blds{\eta}$ variables gives the covariant basis of the coordinate system $\blds{\eta}$,
\begin{align}\label{e:PCC_basis}
	\blds{q}_i (\blds{\eta})
		&= (1 - \kap_i(\eta')\xi_3) \widetilde{\blds{q}}_i(\eta'), \; i = 1, 2,
	\quad
	\blds{q}_3 (\blds{\eta})
		= - \n(\eta').
\end{align}
Using \refEAnd{PCC_boudary_basis}{PCC_basis}, the metric tensor $(q_{ij})_{1 \leq i, j \leq 3}$ is written in the form
\begin{equation}\label{e:PCC_tensor}
    (q_{ij})_{1 \leq i,j \leq 3}
        = \left(
                \begin{array}{ccc}
                        (1 - \kap_1(\eta')\xi_3)^2 \,\widetilde{q}_{11}  &  0 &   0\\
                        0 &   (1 - \kap_2(\eta')\xi_3)^2 \, \widetilde{q}_{22}  &   0\\
                        0   & 0  & 1
                \end{array}
        \right)
\end{equation}
with its determinant, $q(\blds{\eta})$,  bounded away from zero. This is guaranteed by simply choosing the thickness, $3a > 0$, of the tubular neighborhood small enough.
\Ignore{ % Don't know as we need this detail
so that
\begin{equation}\label{e:Positivity of q_ii}
1 - \kap_i(\eta')\xi_3 > 0, \text{ } i=1,2, \text{ } 0 \leq \xi_3 \leq 3a.
\end{equation}
} % End Ignore

It is easy to see that the coordinate system, $\blds{\eta}$, derived from the principal curvature coordinate system, satisfies \refEFirst{positivity of boundary tensor}\refENext{206}\refELast{positivity of tensor}. Hence we use the expression of the corrector $\tht^{\ep}$ , \refEFirst{221}\refENext{226}\refELast{227}, in $\blds{\eta}$ coordinates and write
\begin{equation*}\label{e:Corrector_intrinsic}
\tht^{\ep} = \tht^{\ep}_{\blds{\tau}} + \tht^{\ep}_{3} \blds{e}_3,
\end{equation*}
where
\begin{equation}\label{e:Corrector_eta}
\left\{
            \begin{array}{l}
                        \spacer \displaystyle
                        \tht^{\ep}_{\blds{\tau}} = - \ep \pdx{}{\xi_3} \pr{\sigma(\xi_3) 	\pr{1 -
                                    e^{-\frac{\xi_3}{\sqrt{\eps}}}}}
                                    \sum_{i=1}^{2} \Big\{ \frac{\sqrt{q_{ii}}}{\sqrt{q}}
                                    	\brac{\frac{\sqrt{q}}{\sqrt{q_{ii}}}}_{\xi_3 = 0}
			                        \widetilde{u}(\xi' ; t) \cdot \widehat{\blds{e}}_i\big|_{\xi_3=0}
			                        	\Big\} \widehat{\blds{e}}_i,\\
                        \displaystyle
                        \tht^{\ep}_{3} = \ep \sigma(\xi_3) \pr{1 - e^{-\frac{\xi_3}{\sqrt{\eps}}}}
                                    \frac{1}{\sqrt{q} } \Big\{\sum_{i=1}^2 \pdx{}{\xi_i} \Big( \frac{\sqrt{q}}
                                    	{\sqrt{q_{ii}}}\Big|_{\xi_3 = 0}
                                    \widetilde{u}(\xi' ; t) \cdot \widehat{\blds{e}}_i\big|_{\xi_3=0} \Big) \Big\} ,
            \end{array}
\right.
\end{equation}
for $\widehat{\blds{e}}_i = \blds{q}_i/ |\blds{q}_i|$, $i=1,2$, and $\blds{e}_3 = -\blds{n}$.

Using \refE{PCC_tensor}, it is easy to see that
\begin{equation}\label{e:scale_tan}
	\frac{\sqrt{q_{ii}}}{\sqrt{q}} \brac{\frac{\sqrt{q}}{\sqrt{q_{ii}}}}_{\xi_3 = 0}
            = \dfrac{1}{1 - \kap_j(\eta') \xi_3}
            \text{ for } i=1,2 \text{ and } j = 3-i.
\end{equation}
Then, combining \refEAnd{Corrector_eta}{scale_tan}, we find
\begin{equation}\label{e:intrinsic_tan}
	\tht^{\ep}_{\blds{\tau}}(\blds{\eta}) = - \ep \pdx{}{\xi_3} \pr{\sigma(\xi_3) 	\pr{1 -
                                    e^{-\frac{\xi_3}{\sqrt{\eps}}}}} \mathbf{M}(\blds{\eta}) \widetilde{u}(\eta'; t),
\end{equation}
where $\mathbf{M}$ is a smooth type $(1, 1)$ tensor defined, in our coordinates, by
\begin{equation}\label{e:M}
	\mathbf{M}(\blds{\eta}) \widetilde{F}
		:= \sum_{i=1}^2 \dfrac{1}{ 1 - \kappa_{3-i}(\eta') \xi_3 } \widetilde{F}_i \widehat{\blds{e}}_i ,
        \quad
	\widetilde{F} = \sum_{i=1}^{2} \widetilde{F}_i \widehat{\blds{e}}_i.
\end{equation}

\Ignore{ % Ignore - old way of doing things
Expressed as above, it is clear that $\theta^\eps_{\blds{\tau}}$ depends only upon the principal curvatures, the principal directions, the distance, $\xi_3$, of a point in $\Gamma_{3a}$ from the boundary, and the given vector, $\widetilde{u}$. Hence, though the form of its expression depends on the coordinates chosen, its value at any point in $\Omega$ or $\Gamma$ does not.

It is perhaps not as readily apparent that this same can be said of the normal component, defined in \refE{Corrector_eta}$_2$. To see this, observe that $\theta^\eps_{\blds{n}} : = \tht^{\ep}_3 \blds{e}_3$ is the unique solution of the system
} % End Ignore

On the boundary, the divergence operator is
\begin{align*}
	\dv_{\blds{\tau}}
		= \frac{1}{\sqrt{\widetilde{q}}} \sum_{i=1}^2 \pdx{}{\eta_i}
			\pr{\sqrt{\frac{\widetilde{q}}{\widetilde{q}_{ii}}} \, \widetilde{F}_i}.
\end{align*}
Then by \refE{PCC_tensor},
\begin{align*}
	\sqrt{q}
		= (1 - \kap_1(\eta')\xi_3) (1 - \kap_2(\eta')\xi_3) \sqrt{\widetilde{q}},
\end{align*}
so we can write $\theta_3^\eps$ in \refE{Corrector_eta} as
\begin{align}\label{e:intrinsic_normal}
	\theta_3^\eps
		= \ep \sigma(\xi_3) \frac{1 - e^{-\frac{\xi_3}{\sqrt{\eps}}}}
			{(1 - \kap_1(\eta')\xi_3) (1 - \kap_2(\eta')\xi_3)}
			\dv_{\blds{\tau}} \widetilde{u}.
\end{align}

%
% Ignore the older way of writing the normal component, which will probably be of no more use, but
% you never know.
%
\Ignore{
There are a number of equivalent ways to write $\theta^\eps_{\blds{n}} : = \tht^{\ep}_3 \blds{e}_3$. For one, $\theta^\eps_{\blds{n}}$ is the unique solution of the system
\begin{align*}
	\dv v = - \dv \theta^\eps_{\blds{\tau}} \text{ in } \Omega, \quad v = 0 \text{ on } \Gamma
\end{align*}
such that $v = v_3 \blds{e}_3$.
On the other hand, using \refE{211}, we see that
%by its definition, $\dv \theta^\eps_{\blds{n}} = - \dv \theta^\eps_{\blds{\tau}}$ (which is intrinsically defined) and $\theta^\eps_{\blds{n}} = 0$ on $\Gamma$. But from \refE{211},
\begin{align*}
	\dv (\theta^\eps_{3}\blds{e}_3)
		= \frac{1}{\sqrt{q}} \pdx{}{\xi_3} (\sqrt{q} \theta^\eps_3).
\end{align*}
Hence, $ \sqrt{q} \, \theta_3^\eps$ is the unique solution to
\begin{align*}
	\displaystyle \pdx{}{\xi_3} \brac{\sqrt{q} \theta^\eps_3}
		&= - \sqrt{q} \dv \theta^\eps_{\blds{\tau}},
	\quad
	\theta^\eps_{3} = 0 \text{ on } \Gamma;
\end{align*}
namely,
\begin{align}\label{e:intrinsic_normal}
	\begin{split}
	\theta^\eps_3(\blds{\eta})
		&= - \frac{1}{\sqrt{q}(\blds{\eta})} \int_0^{\xi_3}  \sqrt{q}(\eta', s)
				\dv \theta^\eps_{\blds{\tau}}(\eta', s) \, ds \\
		&= - \int_0^{\xi_3}  \frac{\pr{1 - \kappa_1 s} \pr{1 - \kappa_2 s}}
			{\pr{1 - \kappa_1 \xi_3} \pr{1 - \kappa_2 \xi_3}}
				\dv \theta^\eps_{\blds{\tau}}(\eta', s) \, ds,
	\end{split}
\end{align}
where we used \refE{PCC_tensor}.
} % End Ignore older way to write the normal component
%\begin{align*}
%	h(\blds{\xi})
%		&= \abs{g_1(\blds{\xi})} \abs{g_2(\blds{\xi})}
%		= h(\xi', 0)
%			\pr{1 - \kappa_1 \xi_3} \pr{1 - \kappa_2 \xi_3}
%\end{align*}
%so
\Ignore{ % Ignore
\begin{align*}
	\frac{\sqrt{q}(\eta', s)}{\sqrt{q}(\blds{\eta})}
		= \frac{\pr{1 - \kappa_1 s} \pr{1 - \kappa_2 s}}
			{\pr{1 - \kappa_1 \xi_3} \pr{1 - \kappa_2 \xi_3}}.
\end{align*}
} % End Ignore

% showing that, in fact, $\theta^\eps_3(\blds{\eta})$ and so $\theta^\eps_{\blds{n}} = \theta^\eps_3(\blds{\eta}) \blds{e}_3$ depend only upon the location in $\Omega$ or $\Gamma$. Hence, the same can be said of our full corrector, $\theta^\eps$.

Although we assumed in its derivation that we were near a non-umbilical point so that we could construct a principal curvature coordinate system, our expression for $\theta^\eps$ is perfectly valid at an umbilical point, where we simply have $\kappa_1 = \kappa_2$ thanks to the smoothness of the curvatures in the tangential variables.

Finally, a straightforward but lengthy calculation, which we omit, shows that \refEAnd{226}{227} transforms to \refEAnd{intrinsic_tan}{intrinsic_normal} under the change of variables from $\psi$ to $\psi_p$, showing that our corrector in the form \refEAnd{226}{227} is coordinate-independent. (This is perhaps not immediately obvious, because \refEAnd{226}{227} involve the metrics both on the boundary and in the tubular neighborhood.)
% Because of this coordinate-independence, we can avoid the usual need for a partition of unity to glue together the corrector defined separately on multiple charts.

\begin{remark}\label{R:On PCC}
For most smooth, bounded domains in $\R^3$, principal curvature coordinate systems can be constructed in the neighborhood of all but at most isolated points; in fact, having only isolated umbilical points is (in some sense) generic. And, for instance, a sphere, while it consists only of umbilical points, can be covered by two charts, both of which use principal curvature coordinates (essentially, spherical coordinates). When such coordinates suit the boundary of a domain, the expression for the tangential corrector in \refEAnd{intrinsic_tan}{M} is both simpler to calculate and simpler to interpret than the expression in \refE{226}. In such coordinates, the expressions for the differential operators, such as $\dv$, $\curl$, $\Delta$, can also be written more simply. Though we used principal curvature coordinates in this section to prove that our corrector is independent of the choice of charts, we cannot restrict ourselves to such coordinates in the rest of our analysis, as that would put constraints on the geometry of the domains that we would be able to treat.
\end{remark}

\begin{remark}\label{e:torus}
As a special example of a smooth bounded domain in $\R^3$, consider a solid torus, which has no umbilical points on its boundary. Hence, we need only one principal curvature coordinate system, if we allow it to be periodic in the tangential variables. In this sense, the solid torus is the simplest smooth bounded domain to work with in $\R^3$.
\end{remark}

\subsection{Bounds on the corrector}\label{S:CorrectorBoundsCurved}
We follow the convention described at the beginning of \refS{ChannelCorrectorBounds}, though now the tangential variables are $\xi_1$ and $\xi_2$, and as in \refE{olal}, we let
$
	\ol{\al} = \norm{\Cal{A}}_{C^m(\Gamma)},
$
$m>6$.
Then, from \refEAnd{226}{227}, we infer that
\begin{equation}\label{e:230}
	\norm{\dfrac{\pa \tht^{\ep}_i}{\pa \tau}}_{L^{\infty}([0,T]\times\ov{U})}
		\le \kap_T(1+ \ov{\alpha})\ep^{\frac{1}{2}},
	\quad
	\norm{\dfrac{\pa \tht^{\ep}_i}{\pa \xi_3}}_{L^{\infty}([0,T]\times\ov{U})}
		\le \kap_T(1+ \ov{\alpha}), \text{ } i=1,2,
\end{equation}
and
\begin{equation}\label{e:231}
	\norm{\dfrac{\pa \tht^{\ep}_3}{\pa \tau}}_{L^{\infty}([0,T]\times\ov{U})}
		\le \kap_T(1+ \ov{\alpha})\ep,
	\quad
	\norm{\dfrac{\pa \tht^{\ep}_3}{\pa \xi_3}}_{L^{\infty}([0,T]\times\ov{U})}
		\le \kap_T(1+ \ov{\alpha})\ep^{\frac{1}{2}}.
\end{equation}
%for a constant $\ov{\alpha}>0$, defined in \refE{bounds for A}, independent of $\ep$.

We  now state the estimates on the corrector in the lemma below, which we omit the proof as it is essentially the same as that of \refL{2} because of \refE{positivity of tensor}.
\begin{lemma}\label{L:3}
Assume \refE{0-6} holds and that $k, l, n \ge 0$ are integers either $l=1$, $k=0$ or $l=0$, $0 \leq k \leq 2$. Then the corrector, $\tht^{\ep}$, defined by \refEAnd{226}{227}, satisfies
\begin{align*}\label{e:232}
	\left\{
	\begin{array}{l}
	\spacer \norm{
			\dfrac{\pa^{l+k+n} \tht^{\ep}_i}{\pa t^l \pa \tau^k \pa \xi_3^n}}_{L^{\infty}(0,T;L^{2}(U))}
		\le C |V|^{\frac{1}{2}} (1 + \ov{\alpha})\ep^{\frac{3}{4} - \frac{n}{2}}  , \text{ } i=1,2, \\
    \spacer
    \norm{   \dfrac{\pa^{l+k} \tht^{\ep}_3}
			{\pa t^l \pa \tau^k }}_{L^{\infty}(0,T;L^{2}(U))}
		\le C |V|^{\frac{1}{2}} (1 + \ov{\alpha}) \ep,\\
    \norm{   \dfrac{\pa^{l+k+n+1} \tht^{\ep}_3}
			{\pa t^l \pa \tau^k \pa \xi_3^{n+1}}}_{L^{\infty}(0,T;L^{2}(U))}
		\le C |V|^{\frac{1}{2}} (1 + \ov{\alpha}) \ep^{\frac{3}{4} - \frac{n}{2}},
	\end{array}
	\right.
\end{align*}
for $C = C(T, l, k, n, u_0, f)>0$, independent of $\ep$, $\mathcal{A}$ and the measure $|V|$ of  $V = \psi(U)$.
\end{lemma}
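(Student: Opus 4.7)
The plan is to mimic the proof of \refL{2} almost verbatim, replacing the flat-domain ingredients by their curvilinear analogs. The starting observation is that, by \refE{positivity of tensor} and the regularity \refE{0-6} of $\Gamma$, the functions $\sqrt g$, $1/\sqrt g$, $\sqrt{g_{ii}}$, $1/\sqrt{g_{ii}}$ ($i=1,2$) and $\gamma_i = (\sqrt g / \sqrt{g_{ii}})|_{\xi_3 = 0}$, together with all of their derivatives up to any fixed order, are uniformly bounded on $\ol U$. Consequently these geometric factors contribute no $\eps$-dependence to the estimates and can be treated as bounded constants under the same product-rule bookkeeping as in the flat case.

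For the tangential components, I would expand \refE{226} using the product rule as
\[ \theta^\eps_i = -\eps\, \frac{\sqrt{g_{ii}}}{\sqrt g}(\gamma_i \widetilde u_i)\, \sigma'(\xi_3)\bigl(1 - e^{-\xi_3/\sqrt\eps}\bigr) \;-\; \sqrt\eps\, \frac{\sqrt{g_{ii}}}{\sqrt g}(\gamma_i \widetilde u_i)\, \sigma(\xi_3)\, e^{-\xi_3/\sqrt\eps}, \]
in direct analogy with \refE{pf0}. Derivatives in $t$ and $\tau$ act only on $\widetilde u_i$, $\gamma_i$, and $\sqrt{g_{ii}}/\sqrt g$, and so leave the $\eps$-structure unchanged (the restrictions $l\le 1$, $k\le 2$ together with $m\ge 5$ keep the needed regularity of $u^0$ within \refE{0-7}). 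Each further derivative in $\xi_3$ applied to $e^{-\xi_3/\sqrt\eps}$ costs a factor of $\eps^{-1/2}$, while the smooth $\sigma'$-piece stays uniformly $O(\eps)$. This yields the pointwise estimate
\[ \Big|\dfrac{\pa^{l+k+n}\theta^\eps_i}{\pa t^l \pa \tau^k \pa \xi_3^n}\Big| \le C(1+\ov\alpha)\,\eps^{\frac{1}{2}-\frac{n}{2}}\, e^{-\xi_3/\sqrt\eps} + C(1+\ov\alpha)\,\eps + e.s.t., \]
exactly as in \refE{pf1}. Taking $L^2(U)$ norms via Fubini and the substitution $z' = \xi_3/\sqrt\eps$ in the exponential integral produces an additional $\eps^{1/4}$, giving the claimed $C|V|^{1/2}(1+\ov\alpha)\eps^{3/4 - n/2}$; the factor $|V|^{1/2}$ arises from the tangential $L^2$-norm over $\widetilde U$, which is comparable to $|V|^{1/2}$ via the bounded Jacobian of $\psi$.

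For the normal component $\theta^\eps_3$, with no $\xi_3$-derivative the prefactor $\eps$ in \refE{227} together with uniform boundedness of the remaining smooth factors directly gives the bound $C|V|^{1/2}(1+\ov\alpha)\eps$. With $n+1$ derivatives in $\xi_3$, the first such derivative splits $\sigma(\xi_3)(1-e^{-\xi_3/\sqrt\eps})$ into $\sigma'(1-e^{-\xi_3/\sqrt\eps}) + \eps^{-1/2}\sigma\,e^{-\xi_3/\sqrt\eps}$, and each subsequent $\xi_3$-derivative costs another factor of $\eps^{-1/2}$. Up to lower-order pieces the pointwise bound is thus $C(1+\ov\alpha)\eps^{1-(n+1)/2}e^{-\xi_3/\sqrt\eps}$, and the same change of variable in the $L^2$-integral returns $C|V|^{1/2}(1+\ov\alpha)\eps^{3/4-n/2}$.

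The only bookkeeping subtlety is the systematic accounting of the $\eps^{-1/2}$ factors produced by each $\xi_3$-derivative of $e^{-\xi_3/\sqrt\eps}$, but this is identical to the flat case treated in \refE{pf1}--\refE{pf2}. The curved geometry enters only through uniformly bounded, nondegenerate metric coefficients, so it introduces no genuinely new difficulty.
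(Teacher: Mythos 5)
Your proposal is correct and follows exactly the route the paper intends: the paper omits the proof of this lemma, stating it is essentially the same as that of Lemma~\ref{L:2} thanks to \refE{positivity of tensor}, and your argument is precisely that adaptation — bounded, nondegenerate metric factors absorbed into constants, the product-rule expansion analogous to \refE{pf0}, the pointwise bound as in \refE{pf1}, and the $\xi_3$-integration with the substitution $z'=\xi_3/\sqrt{\ep}$ yielding the extra $\ep^{1/4}$ and the $|V|^{1/2}$ factor from the tangential domain.
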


In addition to the estimates in Lemma \ref{L:3}, as for the case of the channel domain, one can easily verify that the corrector $\tht^{\ep}$ satisfies that, $i=1,2$,
\begin{equation}\label{e:special_est_General}
\norm{ \dfrac{\xi_3}{\sqrt{\ep}} \dfrac{\pa \tht^{\ep}_i}{\pa \xi_3}}_{L^{\infty}(0,T;L^{2}(U))} \leq C |V|^{\frac{1}{2}} (1 + \ov{\alpha})\ep^{\frac{1}{4}}  ,
\quad
\norm{ \dfrac{\xi_3}{\sqrt{\ep}} \dfrac{\pa \tht^{\ep}_3}{\pa \xi_3}}_{L^{\infty}(0,T;L^{2}(U))} \leq C |V|^{\frac{1}{2}} (1 + \ov{\alpha})\ep^{\frac{1}{2}}.
\end{equation}

%--- Subsection
\subsection{Error analysis}\label{S:S3.2}
We set the remainder:
\begin{equation}\label{e:240}
w^{\ep} : = u^{\ep } - u^0 - \tht^{\ep}.
\end{equation}
%Using \refE{0-2}$_2$ and \refE{240}, we first note that for $i=1,2$,
%\begin{equation}\label{e:240-1}\begin{array}{ll}
%\big( \mathbf{S}(w^{\ep}) \n + \alpha w^{\ep} \big)\cdot \blds{e_i} |_{\xi_{3} = 0}
%                            & \spacer \hspace{-2mm}
%                            	= - \Big( \mathbf{S}\big(u^0 + \tht^{\ep} \big)
%					+ \alpha(u^0 + \tht^{\ep})  \Big) \cdot \blds{e}_i|_{\xi_3 = 0}\\
%                             &  \spacer \hspace{-2mm} =  (\text{using
%                            	\refEFirst{219-9}\refENext{219-10}\refENext{223Approx}\refELast{224}} \\
%                            & \hspace{-2mm} = - \big( \alpha \tht^{\ep}_i + \mathcal{M}_{i}(\tht^{\ep}) \big)
%                            \text{ on } \Gamma.
%\end{array}
%\end{equation}
Then, using \refEFirst{0-1}\refENext{Euler}\refENext{General_NBC}\refELast{240} and the fact that $\tht^{\ep}_3 =0$ on $\Gamma$, the equations for $w^{\ep}$ read
\begin{equation}\label{e:241}
	\left\{
	\begin{array}{l}
        \spacer
		\dfrac{\pa w^{\ep}}{\pa t} -\ep \Delta w^{\ep} + \nabla \big(p^{\ep}-p^0\big)
			= \ep \Delta u^0 + R_{\ep}(\tht^{\ep}) - J_{\ep}(u^\ep, u^0),
				\text{ in } \Omega \times(0,T), \\
		\spacer
        \text{div }w^{\ep}
			= 0, \text{ in } \Omega  \times(0,T), \\
		\spacer
		w^{\ep} \cdot \n
			= 0, \text{ on } \Gamma, \\
		\spacer
        \big[ \mathbf{S}(w^{\ep}) \n \big]_{\text{tan}} + \mathcal{A} w^{\ep}
			= - \big[ \mathbf{S}(u^0 + \tht^{\ep}) \n \big]_{\text{tan}}
                - \mathcal{A} (u^0 + \tht^{\ep}), \text{ on } \Gamma, \\
		w^{\ep}|_{t=0}
			= -\tht^{\ep}| _{t=0}, \text{ in } \Omega,
	\end{array}
	\right.
\end{equation}
where $R_{\ep}(\cdot)$ and $J_{\ep}(\cdot, \cdot)$ are defined by \refE{42} and \refE{43}.

We multiply the equation \refESub{241}{1} by $w^{\ep}$, integrate it over $\Omega$ and then use \refL{1}.  After applying the Schwarz and Young inequalities to the right-hand side of the resulting equation, we find:
\begin{equation}\label{e:244}
	\begin{split}
		\dfrac{d}{d t} &\norm{w^{\ep}}_{L^2(\Omega)}^2
			+ 4 \ep \norm{\mathbf{S}( w^{\ep})}_{L^2(\Omega)}^2 \\
		&\le \ep^2\norm{\Delta u^0}_{L^2(\Omega)}^2
			+ \norm{R_{\ep}(\tht^{\ep})}_{L^2(\Omega)}^2
			+ 2 \norm{w^{\ep}}_{L^2(\Omega)}^2 \\
		& - 4 \ep \int_{\Gamma} \Big( \mathcal{A} w^{\ep} + \big[ \mathbf{S}(u^0 + \tht^{\ep}) \n
                \big]_{\text{tan}} + \mathcal{A} (u^0 + \tht^{\ep}) \Big) \cdot  w^{\ep} \, d S\\
		& - 2 \int_{\Omega} J_{\ep}(u^{\ep}, u^0) \cdot w^{\ep} \, d \blds{x}.
	\end{split}
\end{equation}
To go further, using the Korn inequality, we first notice that
\begin{equation}\label{e:245}
	\norm{\mathbf{S}(w^{\ep})}_{L^2(\Omega)}^2
		\ge \kap_{\mathbf{S}}\big\{ \norm{\nabla w^{\ep}}_{L^2(\Omega)}^2
			+ \norm{w^{\ep}}_{L^2(\Omega)}^2 \big\}
		\ge \kap_{\mathbf{S}}\norm{\nabla w^{\ep}}_{L^2(\Omega)}^2
\end{equation}
for a constant, $\kap_{\mathbf{S}}$, depending on the domain, but independent of $\ep$ and $\alpha$.

Restricted to the range, $V$, of any chart, $\psi$, we find, using \refE{42} with $v$ replaced by $\tht^{\ep}$ and \refEFirst{214}\refELast{215} for each $\tht^{\ep}$, that
\begin{align*}
	\norm{R_{\ep}(\tht^{\ep})}_{L^2(V)}^2
		&\le \Big\| \dfrac{\pa \tht^\eps}{\pa t} \Big\|_{L^2(U)}^2
			+ \ep \sum_{i=1}^{3}\Big\| \mathcal{S}^i \tht^\eps + \mathcal{L}^i \tht^\eps_i
			+ \dfrac{\pa^2 \tht^\eps_i}{\pa \xi_3^2} \Big\|_{L^2(U)}^2 \\
		&\le \kap_T (1+ \ov{\alpha}^2) \ep^{\frac{3}{2}},
\end{align*}
where we also used \refR{201} and \refL{3}. Since we have a finite number of charts on $\Gamma_{3a}$, and since $\theta^\eps$ is supported in $\Gamma_{3 a}$ by \refE{229}, the same estimate holds on $\Omega$; namely,
\begin{align}\label{e:249}
	\norm{R_{\ep}(\tht^{\ep})}_{L^2(\Omega)}^2
		\le \kap_T (1+ \ov{\alpha}^2) \ep^{\frac{3}{2}}.
\end{align}

To estimate the fourth term in the right-hand side of \refE{244}, we write
\begin{equation}\label{e:B_Int_1}\begin{array}{l}
    \spacer \displaystyle
    \Big| 4 \ep \int_{\Gamma} \Big( \mathcal{A} w^{\ep} + \big[ \mathbf{S}(u^0 + \tht^{\ep}) \n
                \big]_{\text{tan}} + \mathcal{A} (u^0 + \tht^{\ep}) \Big)  \cdot  w^{\ep} \, d S\Big|\\
    \quad \quad \displaystyle
    \le \kap_T \ep \ov{\al} \| w \|^2_{L^2(\Gamma)} + \kap_T \ep \Big\|  \big[ \mathbf{S}(u^0 + \tht^{\ep}) \n
                \big]_{\text{tan}} + \mathcal{A} (u^0 + \tht^{\ep}) \Big\|_{L^2(\Gamma)} \| w \|_{L^2(\Gamma)}.
\end{array}
\end{equation}
On each $\widetilde{V} \subset \Gamma$, the rage of the boundary chart, $\widetilde{\psi}$, using \refEFirst{219-9}\refENext{219-10}\refENext{224}\refELast{223Approx}, we have
\begin{equation*}
    \begin{array}{l}
	        \spacer
            \Big( \big[ \mathbf{S}(u^0 + \tht^{\ep}) \n \big]_{\text{tan}} + \mathcal{A} (u^0 + \tht^{\ep}) \Big)\Big|_{\widetilde{V}}\\
            \spacer \displaystyle
            = \sum_{i=1}^{2} \brac{ \dfrac{1}{2} \widetilde{u}_i - \dfrac{1}{2} \dfrac{\pa \tht^\eps_i}{\pa \xi_3}
			+ \mathcal{M}_i\Big( \tht^\eps_i, \dfrac{\pa \tht^{\ep}_3}{\pa \xi_i} \Big) + \sum_{j=1}^2 \alpha_{ij}
            \tht^{\ep}_j}_{\xi_3 = 0} \blds{e}_i |_{\xi_3 = 0} \\
            \displaystyle
            = \sum_{i=1}^{2} \brac{\mathcal{M}_i\Big( \tht^\eps_i, \dfrac{\pa \tht^{\ep}_3}{\pa \xi_i} \Big) + \sum_{j=1}^2
            \alpha_{ij} \tht^{\ep}_j + \dfrac{1}{2} \sqrt{\ep} E (\xi_1, \xi_2; t)}_{\xi_3 = 0} \blds{e}_i |_{\xi_3 = 0},
    \end{array}
\end{equation*}
where $\widetilde{u}_i$ and $E$ are defined by \refEAnd{224}{Error_Boundary}. Since this bound holds for all charts, using Remark \ref{R:NavierBC} and \refEFirst{219-10}\refENext{230}\refELast{231}, we find that
\begin{equation}\label{e:B_Int_3}
\ep \Big\| \big[ \mathbf{S}(u^0 + \tht^{\ep}) \n
                \big]_{\text{tan}} + \mathcal{A} (u^0 + \tht^{\ep}) \Big\|_{L^2(\Gamma)} \leq \kap_T (1 + \ov{\al}^2) \ep^{\frac{3}{2}}.
\end{equation}
Thanks to \refEFirst{245}\refENext{249}\refENext{B_Int_1}\refELast{B_Int_3}, applying \refL{trace-like} and the Poincar\'{e} inequality, (\ref{e:244}) yields that
%
%
%
%
%
%
%Estimating the fifth term in the right-hand side of \refE{244}, we have for $i=1, 2$,
%\begin{align*}
%	\ep &\Big|  \int_{0} \Big( \alpha w^{\ep}_i
%		+ \mathcal{M}_{i}(\tht^{\ep}) +\alpha \tht^{\ep}_i \Big)   w^{\ep}_i \, d S \Big| \\
%	&\le |\alpha| \ep \norm{w^{\ep}}_{L^2(\Gamma)}^{2} + \ep \norm{\mathcal{M}_{i}(\tht^{\ep})
%		+ \alpha \tht^{\ep}_i}_{L^2(\Gamma)} \norm{w^{\ep}}_{L^2(\Gamma)} \\
%	&\le (\text{using Remark \ref{r:101} and \refEAnd{230}{231}}) \\
%	&\le |\alpha| \ep \norm{w^{\ep}}_{L^2(\Gamma)}^{2}
%		+ \kap_T (1 + \alpha^2) \ep^{\frac{3}{2}} \norm{w^{\ep}}_{L^2(\Gamma)} \\
%	&\le (\text{using \refL{trace-like} and the Poincar\'{e} inequality}) \\
%	&\le \kap_T |\alpha|\ep  \norm{w^{\ep}}_{L^2(\Omega)} \norm{\nabla w^{\ep}}_{L^2(\Omega)}
%		+ \kap_T (1 + \alpha^2) \ep^{\frac{3}{2}} \norm{\nabla w^{\ep}}_{L^2(\Omega)}\\
%	&\le \dfrac{1}{4} \kap_{\mathbf{S}}\ep  \norm{\nabla w^{\ep}}_{L^2(\Omega)}^2
%		+ \kap_T \alpha^2\ep \norm{w^{\ep}}_{L^2(\Omega)}^2  + \kap_T (1+ \alpha^4) \ep^2.
%\end{align*}
%Combining this inequality with \refEThrough{244}{249}, we obtain
\begin{align}\label{e:252}
	\begin{split}
		\dfrac{d}{d t}&\norm{w^{\ep}}_{L^2(\Omega)}^2
			+ 2 \kap_{\mathbf{S}} \ep \norm{\nabla w^{\ep}}_{L^2(\Omega)}^2 \\
		&\le  \kap_T (1 + \ov{\alpha}^4) \ep^{\frac{3}{2}} + \kap_T(1 + \ov{\alpha}^2 ) \norm{w^{\ep}}_{L^2(\Omega)}^2
			- 2 \int_{\Omega} J_{\ep}(u^{\ep}, u^0) \cdot w^{\ep} \, d \blds{x}.
	\end{split}
\end{align}

To estimate the last term of \refE{252}, using \refE{53}, we write
% we use \refEAnd{43}{240} to write
%\begin{align}\label{e:J_general}
%	\begin{split}
%		J_{\ep}(u^{\ep}, u^0)
%			= (u^{\ep} &\cdot \nabla)w^{\ep}
%			+ (w^{\ep} \cdot \nabla)(u^{\ep} - w^{\ep})
%			+ (u^{0}\cdot \nabla)\tht^{\ep} \\
%			&+ (\tht^{\ep}\cdot \nabla)u^{0}
%			+ (\tht^{\ep}\cdot \nabla)\tht^{\ep}.
%	\end{split}
%\end{align}
%Then we find
\begin{align*}
	\int_{\Omega} J_{\ep}(u^{\ep}, u^0) \cdot w^{\ep} \, d \blds{x}
		:= \sum_{j=1}^5 \mathcal{J}_{\ep}^j ,
\end{align*}
where $\mathcal{J}_{\ep}^j $, $1\leq j \leq 5$, are given by \refE{55}.
Due to \refEFirst{229}\refENext{230}\refELast{231} and \refL{3}, one can easily verifies that $\mathcal{J}_{\ep}^j $, $j= 2,3,$ satisfies the same estimate, appearing in \refEAnd{60}{61}, as for the case of a channel domain. That is,
\begin{align*}
	\Big|  \sum_{j=1}^{3} \mathcal{J}_{\ep}^j
		\Big| \leq \kap_T (1+ \ov{\alpha}^2 ) \ep^{\frac{3}{2}} + \kap_T(1+ \ov{\alpha})
\norm{w^{\ep}}_{L^2(\Omega)}^2.
\end{align*}

To bound $\mathcal{J}_{\ep}^4$ and  $\mathcal{J}_{\ep}^5$, it is sufficient to work in a single chart, $\psi \colon U \to V$, as there a finite number, $N$,  of them, which just introduces the constant, $N$.

For $\mathcal{J}_{\ep}^4$, using \refEFirstSub{55}{4}\refELast{229}, we write
\begin{align*}
	\abs{\mathcal{J}_{\ep}^4}
		&\le \norm{(u^0\cdot\nabla)\tht^{\ep}}_{L^2(V)}  \norm{w^{\ep}}_{L^2(\Omega)} \\
		&\le (\text{using \refEAnd{218}{219} with $F$, $G$ replaced by $u^0$, $\tht^{\ep}$,
			respectively}) \\
		&\le \kap_T \norm{u^0}_{L^{\infty}(\Omega)}
			\sum_{i=1}^{3} \Big\{ \norm{\tht^\eps_i}_{L^2(U)}
			+ \sum_{j=1}^{2} \norm{\dfrac{\pa \tht^{\ep}_i}{ \pa \xi_j}}_{L^2(U)} \Big\}
			\norm{w^{\ep}}_{L^2(\Omega)}\\
			& \quad + \kap_T \sqrt{\ep} \norm{\dfrac{u^0 \cdot \blds{e}_3}{\xi_3}}_{L^{\infty}(\Gamma_{2a})}
                    		\sum_{i=1}^{3}
				\norm{\dfrac{\xi_3}{\sqrt{\ep}} \dfrac{\pa \tht^{\ep}_i}{\pa \xi_3}}_{L^{2}
				(U)} \norm{w^{\ep}}_{L^2(\Omega)} \\
		&\le (\text{using \refE{special_est_General}, \refL{3} and the regularity of
			$u^0$ with $(u^0 \cdot \blds{e}_3)|_{\xi_3 = 0} = 0$}) \\
		&\le \kap_T (1 + \ov{\alpha}) \ep^{\frac{3}{4}} \norm{w^{\ep}}_{L^2(\Omega)}
		\le \kap_T (1 + \ov{\alpha}^2) \ep^{\frac{3}{2}} + \norm{w^{\ep}}_{L^2(\Omega)}^2.
\end{align*}

For $\mathcal{J}_{\ep}^5$, using \refEAnd{218}{219} with $G$ and $F$ replaced by $\tht^{\ep}$, and using \refE{229}, we find
\begin{align*}
	\abs{\mathcal{J}_{\ep}^5}
		&\le  \norm{(\tht^{\ep} \cdot \nabla)\tht^{\ep}}_{L^2(V)}
			\norm{w^{\ep}}_{L^2(\Omega)} \\
%		&\le \kap_T \Big\{ \sum_{i,j = 1}^{3} \norm{\tht^\eps_j \dfrac{\pa
%			\tht^\eps_i}{\pa \xi_j}}_{L^2(U)}
%			+ \kap_T \sum_{j=1}^2 \sum_{i = 1}^{3} \norm{\tht^\eps_j \tht^\eps_i}_{L^2(U)}
%			\Big\} \norm{w^{\ep}}_{L^2(\Omega)}\\
		&\le \kap_T  \norm{\tht^\ep}_{L^{\infty}(U)}  \sum_{i = 1}^{3}
			\Big\{ \norm{\tht^\eps_i}_{L^2(U)} + \sum_{j=1}^2 \norm{\dfrac{\pa \tht^\eps_i}{\pa \xi_j}}_{L^2(U)}
            \Big\}\norm{w^{\ep}}_{L^2(\Omega)} \\
		&\quad
			+  \kap_T \norm{\tht^\eps_3}_{L^{\infty}(U)} \sum_{i=1}^{3}
			\norm{\dfrac{\pa \tht^\eps_i}{\pa \xi_3}}_{L^2(U)}
			\norm{w^{\ep}}_{L^2(\Omega)} \\
		&\le  (\text{using \refEAnd{230}{231} and \refL{3}}) \\
		&\le \kap_T (1 + \ov{\alpha}^2) \ep^{\frac{5}{4}} \norm{w^{\ep}}_{L^2(\Omega)}
			\le \kap_T (1 + \ov{\alpha}^4) \ep^{\frac{5}{2}} + \norm{w^{\ep}}_{L^2(\Omega)}^2.
\end{align*}
Using these bounds on $\mathcal{J}_\eps^i$, $1 \leq i \leq 5$, \refE{252} becomes
\begin{align*}
%\begin{array}{l}
%\spacer
\dfrac{d}{d t}\norm{w^{\ep}}_{L^2(\Omega)}^2 +  2 \kap_{\mathbf{S}} \ep |\nabla w^{\ep}|_{L^2(\Omega)}^2
%\\
                    %\spacer \hspace{15mm} \displaystyle
                    \leq \kap_T (1+\ov{\alpha}^4)\ep^{\frac{3}{2}} + \kap_T (1 + \ov{\alpha}^2) \norm{w^{\ep}}_{L^2(\Omega)}^2.
%\end{array}
\end{align*}
Moreover, using \refEFirst{226}\refENext{227}\refELastSub{241}{5}, we see that
\begin{align*}
\normmark  w^{\ep}|_{t=0} \normmark _{L^2(\Omega)} = \norm{\tht^{\ep}|_{t=0}}_{L^2(\Gamma_{2a})} \leq \kap_T (1 + \ov{\al}) \ep^{\frac{1}{2}} \smallnorm{e^{-\frac{\xi_3}{\sqrt{\ep}}}}_{L^2(\Gamma_{2a})} + l.o.t. \leq \kap_T (1 + \ov{\al}) \ep^{\frac{3}{4}}.
\end{align*}
Thanks to the Gronwall inequality, we finally have the estimates,
\begin{align}\label{e:267}\
	\norm{w^{\ep}}_{L^{\infty}(0,T; L^2(\Omega))}
		\le \kap(T, \ov{\al}, u_0, f) \ep^{\frac{3}{4}},
	\quad
	\norm{w^{\ep}}_{L^{2}(0,T; H^1(\Omega))}
		\le \kap(T, \ov{\al}, u_0, f) \ep^{\frac{1}{4}}.
\end{align}

%--- Subsection
\subsection{Proof of convergence}\label{S:S3.3}
Using \refE{240}, we first notice that
\begin{align*}
|u^{\ep} - u^0| \leq |w^{\ep}| + |\tht^{\ep}|, \text{ } \text{ (pointwise in $\Omega \times (0,T)$)}.
\end{align*}
Then, using \refE{267} and \refL{3}, \refE{68} follows. \hspace{30mm}$\Box$

%
% Section
%
\section{Uniform convergence}\label{S:S-4}

\noindent With the estimates we now have, the proof of \refE{uniform} is quite simple.

Because we assume that $m > 6$ in \refE{0-6}, $u_0 \in E^m\cap H$ and (by Sobolev embedding) $\nabla u_0 \in W^{1, \infty}_{co}$. Hence, both \refE{0-7} and the hypotheses for \refT{MR} hold\footnote{The hypotheses in \refT{1} are not the minimal ones insuring this.}, so we can use \refEAnd{0-7}{MR1} to conclude that
\begin{align*}
	\norm{u^\eps - u^0}_{L^\iny(0, T; H_{co}^m(\Omega))},
		\qquad
		\norm{\grad(u^\eps - u^0)}_{L^\iny(0, T; H_{co}^{m - 1}(\Omega))}
			\le C.
\end{align*}
Then using \refE{68}, \refT{AnistropicAgmonsInequality}, and \refR{AgmonsChannel}, \refE{uniform} follows.

\begin{remark}\label{R:LipschitzInterpolation}
	Since also $u^\eps - u^0$ lies in $L^\iny([0, T]; W^{1, \iny})$ by \refT{MR}, we could use the
	Gagliardo-Nirenberg interpolation inequality (see, for instance, p. 314 of \cite{Brezis2011}),
	\begin{align*}
		\norm{u^\eps - u^0}_{L^\iny}
			\le C \norm{u^\eps - u^0}_{L^2}^{\frac{2}{5}} \norm{u^\eps - u^0}_{W^{1, \iny}}^{\frac{3}{5}},
	\end{align*}
	to give $\norm{u^\eps - u^0}_{L^\iny} \le C \eps^{3/10}$. This is the same rate that is obtained
	for $m = 6$ in \refE{uniform}; since, however, we require that $m > 6$, \refE{uniform} always gives
	a better rate than this.
\end{remark}

\appendix

%
% Appendix
%
\section{An anisotropic Agmon's inequality}\label{S:Agmons}

\noindent In this section we develop a version of Agmon's inequality in $d$ dimensions, $d = 2$ or $3$, that is suitable for applying to anisotropic problems in which there is more control over tangential (horizontal) derivatives than over normal (vertical) derivatives.

We use the notation $A \ll B$ to mean that $A \le CB$ for some constant, $C$, which may depend upon the geometry of an underlying domain but not upon anything else. If $C$ depends on some parameter, $m$, then we write $A \ll_m B$.

Our starting point is the following simple lemma:

\begin{lemma}\label{L:Agmon}
	Let $U$ be a bounded domain in $\R^d$, $d = 1, 2, 3$. For any $f$ in $H^k(U)$, $k \ge d$,
	\begin{align*}
		\norm{f}_{L^\iny(U)}
			\ll_k \norm{f}_{L^2(U)}^{1 - \frac{1}{2k}} \norm{f}_{H^k(U)}^{\frac{1}{2k}} &\text{ if } d = 1,
							\\
		\norm{f}_{L^\iny(U)}
			\ll_k \norm{f}_{L^2(U)}^{1 - \frac{1}{k}} \norm{f}_{H^k(U)}^{\frac{1}{k}} &\text{ if } d = 2,
							\\
		\norm{f}_{L^\iny(U)}
			\ll_k \norm{f}_{L^2(U)}^{1 - \frac{3}{2k}} \norm{f}_{H^k(U)}^{\frac{3}{2k}} &\text{ if } d = 3.
	\end{align*}
\end{lemma}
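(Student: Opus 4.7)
My plan is to reduce to an inequality on all of $\R^d$ via a bounded extension operator and then to establish the latter by a standard Fourier-analytic low/high-frequency split. Since $U$ is a bounded domain with sufficiently regular (e.g.\ Lipschitz) boundary, there is a continuous linear extension $E \colon H^k(U) \to H^k(\R^d)$ satisfying $\norm{Ef}_{L^2(\R^d)} \ll \norm{f}_{L^2(U)}$ and $\norm{Ef}_{H^k(\R^d)} \ll_k \norm{f}_{H^k(U)}$. Replacing $f$ by $Ef$ and noting that restricting to $U$ only decreases the $L^\iny$ norm, it suffices to prove
\begin{equation*}
    \norm{f}_{L^\iny(\R^d)}
        \ll_k \norm{f}_{L^2(\R^d)}^{1 - d/(2k)} \norm{f}_{H^k(\R^d)}^{d/(2k)}
\end{equation*}
for $f \in H^k(\R^d)$; the three stated cases then follow by specializing $d/(2k)$ to $1/(2k)$, $1/k$, and $3/(2k)$ respectively.

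Next, I would apply Fourier inversion to obtain $|f(x)| \ll \smallnorm{\widehat{f}}_{L^1(\R^d)}$, once $\widehat f$ is known to lie in $L^1$; that fact will be a byproduct of the split I am about to make. For an arbitrary $R > 0$, splitting at the sphere $|\xi| = R$ and applying Cauchy--Schwarz on each piece gives
\begin{equation*}
    \int_{|\xi| \le R} |\widehat f(\xi)| \, d\xi
        \le |B_R|^{1/2} \smallnorm{\widehat f}_{L^2}
        \ll R^{d/2} \norm{f}_{L^2}
\end{equation*}
on the low-frequency piece, and
\begin{equation*}
    \int_{|\xi| > R} |\widehat f(\xi)| \, d\xi
        \le \left( \int_{|\xi| > R} |\xi|^{-2k} \, d\xi \right)^{1/2}
            \left( \int_{\R^d} |\xi|^{2k} |\widehat f(\xi)|^2 \, d\xi \right)^{1/2}
        \ll_k R^{(d - 2k)/2} \norm{f}_{H^k}
\end{equation*}
on the high-frequency piece. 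The integral $\int_{|\xi| > R} |\xi|^{-2k} \, d\xi$ is finite and evaluates to a dimensional constant times $R^{d - 2k}$ precisely because $2k > d$; this is where the dimensional hypothesis enters, and $k \ge d$ is in fact far more than needed (only $k > d/2$ is used).

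Finally, I would balance the two bounds by choosing $R$ so that $R^{d/2} \norm{f}_{L^2} = R^{(d - 2k)/2} \norm{f}_{H^k}$, namely $R = (\norm{f}_{H^k}/\norm{f}_{L^2})^{1/k}$. Substituting back yields the pointwise estimate
\begin{equation*}
    |f(x)| \ll_k \norm{f}_{L^2}^{1 - d/(2k)} \norm{f}_{H^k}^{d/(2k)},
\end{equation*}
and taking the essential supremum over $x$ gives the lemma. I expect no serious obstacle. A brief remark on why I prefer this route over Sobolev embedding plus interpolation: using $H^s \hookrightarrow L^\iny$ for the smallest integer $s > d/2$ and then interpolating $H^s$ between $L^2$ and $H^k$ yields strictly worse exponents than those claimed (and fails at the critical endpoint $s = d/2$ when $d$ is even). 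The Fourier split, by contrast, is tuned to the sharp scaling exponent $\theta = d/(2k)$, which is exactly what the lemma asserts.
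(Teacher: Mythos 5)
Your proof is correct, but it takes a genuinely different route from the paper. The paper's proof is a two-line reduction: it quotes the standard Agmon inequalities on $U$ itself ($\norm{f}_{L^\iny} \ll \norm{f}_{L^2}^{1/2}\norm{f}_{H^1}^{1/2}$ in 1D, $\norm{f}_{L^\iny} \ll \norm{f}_{L^2}^{1/2}\norm{f}_{H^2}^{1/2}$ in 2D, $\norm{f}_{L^\iny} \ll \norm{f}_{L^2}^{1/4}\norm{f}_{H^2}^{3/4}$ in 3D) and composes them with the Sobolev interpolation inequality $\norm{f}_{H^j(U)} \ll_k \norm{f}_{L^2(U)}^{1-j/k}\norm{f}_{H^k(U)}^{j/k}$; a short computation shows this reproduces exactly the exponents $1/(2k)$, $1/k$, $3/(2k)$. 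You instead extend to $\R^d$ by a Stein-type operator and run the low/high frequency split with the optimal cutoff $R = (\norm{f}_{H^k}/\norm{f}_{L^2})^{1/k}$, which is sound: the single extension operator is bounded simultaneously on $L^2$ and $H^k$, the tail integral converges precisely when $2k > d$, and the balancing gives the unified exponent $d/(2k)$. What your route buys is a self-contained argument (modulo extension and Plancherel) valid for all real $k > d/2$ at once, with the sharp scaling visible; what the paper's route buys is brevity and the fact that it never leaves the bounded domain, needing only classical inequalities as black boxes. Note that both arguments implicitly require the same mild boundary regularity for $U$ (an extension/embedding domain), even though the lemma says only ``bounded domain''; this is harmless since the lemma is applied to coordinate boxes, intervals, and $\Omega \setminus \Gamma_a$. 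One small caveat about your closing remark: the inefficiency you describe attaches to the naive chain $H^s \hookrightarrow L^\iny$ plus interpolation, not to the paper's actual argument, which interpolates starting from the multiplicative Agmon inequalities and loses nothing in the exponents.
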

\begin{proof}
	Combine the 1D Agmons' inequality, $\norm{f}_{L^\iny(U)}
	\ll \norm{f}_{L^2(U)}^{1/2	} \norm{f}_{H^1(U)}^{1/2}$,
	2D Agmons' inequality, $\norm{f}_{L^\iny(U)}
	\ll \norm{f}_{L^2(U)}^{1/2} \norm{f}_{H^2(U)}^{1/2}$,
	or 3D Agmon's inequality,
	$\norm{f}_{L^\iny(U)}
	\ll \norm{f}_{L^2(U)}^{1/4} \norm{f}_{H^2(U)}^{3/4}$
	with the Sobolev interpolation inequality,
	$\norm{f}_{H^j(U)} \ll_k \norm{f}_{L^2(U)}^{1 - j/k} \norm{f}_{H^k(U)}^{j/k}$,
	$0 \le j \le k$.
\end{proof}

\begin{theorem}\label{T:AnistropicAgmonsInequality}
	Let $\Omega$ be a bounded domain in $\R^3$ with $C^{m + 1}$-boundary, $m \ge 3$, and
	let $\Gamma_a$ be the tubular neighborhood of fixed width $a > 0$ interior to $\Omega$.
	Suppose that $f$ and $\grad f$ lie in the space $H^m_{co}(\Omega)$ of \refD{HConormal}. Then
	\begin{align*}
		\norm{f}_{L^\iny(\Gamma_a)}
			&\ll_{m, a}
				\norm{f}_{L^2(\Omega)}^{\frac{1}{2} - \frac{1}{2m}}
					\norm{f}_{H^m_{co}(\Omega)}^{\frac{1}{2m}}
					\brac{\norm{f}_{L^2(\Omega)}+ \norm{\grad f}_{H^m_{co}(\Omega)}}^{\frac{1}{2}}, \\
		\norm{f}_{L^\iny(\Omega \setminus \Gamma_a)}
			&\ll_{m, a}
				\norm{f}_{L^2(\Omega)}^{1 - \frac{3}{2m}}
					\norm{f}_{H^m_{co}(\Omega)}^{\frac{3}{2m}}.
	\end{align*}
\end{theorem}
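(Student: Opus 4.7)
The plan is to handle the two estimates of the theorem separately. The second estimate, on the interior region $\Omega \setminus \Gamma_a$, is essentially a reduction to the isotropic 3D Agmon inequality: away from the boundary, every smooth vector field is trivially tangent to $\partial\Omega$, so the conormal generators $(Z_j)$ of \refD{HConormal} span the full tangent bundle on any compact subset of the interior, and hence the $H^m_{co}(\Omega)$-norm dominates the ordinary $H^m$-norm on $\Omega \setminus \Gamma_{a/2}$ up to a constant depending on $m$ and $a$. Multiplying $f$ by a smooth cutoff $\chi$ that equals $1$ on $\Omega \setminus \Gamma_a$ and is supported in $\Omega \setminus \Gamma_{a/2}$, and applying the $d=3$ case of \refL{Agmon} to $\chi f \in H^m(\Omega)$, yields the second estimate directly.

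For the first estimate on $\Gamma_a$, I would use the $C^{m+1}$-regularity of $\Gamma$ to cover $\Gamma_a$ by finitely many boundary normal charts as in \refE{x}, reducing to work in a single chart parametrized by $(\xi', \xi_3) \in \widetilde{U} \times (0, a)$, with $\xi_3$ the distance to the boundary. In such a chart the tangential partials $\partial_{\xi_1}, \partial_{\xi_2}$ are, up to smooth metric factors, conormal vector fields, so $\norm{\partial_{\xi'}^\beta f}_{L^2}$ for $|\beta| \le m$ is controlled by $\norm{f}_{H^m_{co}(\Omega)}$; and because coordinate partials commute and $\partial_{\xi_3}f$ is $\grad f \cdot \blds{n}$ up to smooth factors, the mixed derivative $\partial_{\xi_3} \partial_{\xi'}^\beta f$ is controlled in $L^2$ by $\norm{\grad f}_{H^m_{co}(\Omega)}$.

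The anisotropic inequality itself is obtained by combining a 2D Agmon-type bound in each tangential slice with a 1D sup-estimate in the normal direction. For each fixed $\xi_3 \in (0, a)$, the 2D case of \refL{Agmon} applied to $f(\cdot, \xi_3)$ gives
\begin{align*}
\norm{f(\cdot, \xi_3)}_{L^\infty_{\xi'}}
    \ll_m \norm{f(\cdot, \xi_3)}_{L^2_{\xi'}}^{1 - 1/m}
    \norm{f(\cdot, \xi_3)}_{H^m_{\xi'}}^{1/m}.
\end{align*}
Taking the supremum in $\xi_3$ and separating the two factors reduces matters to estimating $\sup_{\xi_3} \norm{f(\cdot, \xi_3)}_{L^2_{\xi'}}$ and $\sup_{\xi_3} \norm{f(\cdot, \xi_3)}_{H^m_{\xi'}}$. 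For each I would apply the elementary one-dimensional inequality $\sup_{(0,a)} h^2 \le a^{-1}\norm{h}_{L^2(0,a)}^2 + 2 \norm{h}_{L^2(0,a)} \norm{h'}_{L^2(0,a)}$ (which follows from $h(s)^2 - h(t)^2 = 2 \int_t^s h h' d\tau$ and averaging over $t$), with $h(\xi_3)$ equal to the corresponding tangential norm and $|h'|$ dominated by the $\xi'$-norm of $\partial_{\xi_3} f$. Using the observations of the previous paragraph to integrate $|h'|^2$ in $\xi_3$, this yields
\begin{align*}
\sup_{\xi_3} \norm{f(\cdot, \xi_3)}_{L^2_{\xi'}}^2
    &\ll_a \norm{f}_{L^2(\Omega)} \pr{\norm{f}_{L^2(\Omega)} + \norm{\grad f}_{H^m_{co}(\Omega)}}, \\
\sup_{\xi_3} \norm{f(\cdot, \xi_3)}_{H^m_{\xi'}}^2
    &\ll_a \norm{f}_{H^m_{co}(\Omega)} \pr{\norm{f}_{H^m_{co}(\Omega)} + \norm{\grad f}_{H^m_{co}(\Omega)}}.
\end{align*}

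The final ingredient, and the main technical point, is the elliptic-type bound $\norm{f}_{H^m_{co}(\Omega)} \ll_m \norm{f}_{L^2(\Omega)} + \norm{\grad f}_{H^{m-1}_{co}(\Omega)}$, proved by induction on $m$ from the facts that each $Z_j = \sum_i c_j^{(i)} \partial_{x_i}$ is a first-order operator with smooth coefficients (so $\norm{Z^\beta f}_{L^2} \ll \norm{\grad Z^{\beta'} f}_{L^2}$ for $\beta = \beta' + e_{i_1}$) and that commutators of $\grad$ with the $Z^{\beta'}$ yield operators of lower order whose $L^2$-norms are absorbed by the inductive hypothesis. Since $\norm{\grad f}_{H^{m-1}_{co}} \le \norm{\grad f}_{H^m_{co}}$, this lets me replace $\norm{f}_{H^m_{co}} + \norm{\grad f}_{H^m_{co}}$ by the looser $\norm{f}_{L^2} + \norm{\grad f}_{H^m_{co}}$ in the second supremum above. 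Plugging the two bounds back into the 2D/1D combination and raising to the $(1-1/m)$ and $1/m$ powers respectively gives
\begin{align*}
\norm{f}_{L^\infty(\Gamma_a)}^2
    \ll_{m, a} \norm{f}_{L^2(\Omega)}^{1-1/m} \norm{f}_{H^m_{co}(\Omega)}^{1/m}
        \pr{\norm{f}_{L^2(\Omega)} + \norm{\grad f}_{H^m_{co}(\Omega)}},
\end{align*}
and taking square roots and summing over the finite atlas completes the proof. The induction step in the elliptic estimate, handled in boundary normal coordinates where conormal fields are locally generated by $\partial_{\xi_1}$, $\partial_{\xi_2}$, and $\xi_3 \partial_{\xi_3}$, is the subtlest calculation; the rest is careful bookkeeping of the anisotropic Agmon chain.
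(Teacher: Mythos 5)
Your proposal is correct and follows essentially the same route as the paper's proof: boundary normal charts, the 2D case of \refL{Agmon} in tangential slices, a 1D Agmon-type bound in the normal variable (which you apply to the slice norms rather than pointwise in $\xi'$ before integrating, an equivalent variant), and the 3D case of \refL{Agmon} for $\Omega \setminus \Gamma_a$. The only substantive addition is that you make explicit, and prove, the bound $\norm{f}_{H^m_{co}(\Omega)} \ll_m \norm{f}_{L^2(\Omega)} + \norm{\grad f}_{H^{m-1}_{co}(\Omega)}$ needed to arrive at the bracket $\brac{\norm{f}_{L^2(\Omega)} + \norm{\grad f}_{H^m_{co}(\Omega)}}^{1/2}$, a step the paper passes over with only a brief remark.
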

\begin{proof}
We define the chart, $\psi$, as in the beginning of \refS{S-3}. In this chart, we can define a local conormal basis, $(X_1, X_2, X_3)$, by $X_i f(\blds{x}) = \prt_i (f \circ \psi)(\psi^{-1}(\blds{x}))$, $i = 1, 2$ and
$
	X_3 f(\blds{x}) = \frac{\psi^{-1}_3}{1 + \psi^{-1}_3} \prt_3 (f \circ \psi)(\psi^{-1}(\blds{x})).
$
We will have need, however, only for $X_1$ and $X_2$.
\Ignore { % Ignore more detail on $X_3$
So, for instance, if $p = \psi(\xi_1, \xi_2, \xi_3)$ lies in $V$ then $\psi_3^{-1}(p) = \xi_3$ and
\begin{align*}
	X_3 f (p) = \frac{xi_3}{1 + \xi_3} \prt_3 \psi(\psi^{-1}(p)) \cdot \grad f (p),
\end{align*}
showing that $X_3$, and similarly $X_1$ and $X_2$, are derivations that are clearly tangent to the boundary and generate all tangent vector fields locally on $V$.
} % End Ignore

It suffices to assume that $f$ lies in $C^\iny(\ol{\Omega})$.
Restricting ourselves to the one chart, $\psi$, by \refL{Agmon}, we have, for any $\blds{\xi} = (\xi_1, \xi_2, \xi_3)$ in $U$,
\begin{align*}
	f \circ \psi(\blds{\xi})
		&\ll_m \norm{f \circ \psi (\cdot, \cdot, \xi_3)}_{L^2(U_0)}^{1 - \frac{1}{m}}
		\norm{f \circ \psi (\cdot, \cdot, \xi_3)}_{H^m(U_0)}^{\frac{1}{m}}.
\end{align*}
Applying \refL{Agmon} again, this time in 1D with $k = 1$, gives
\begin{align*}
	&\norm{f \circ \psi(\cdot, \cdot, \xi_3)}_{L^2(U_0)}^2
		= \int_{U_0} f \circ \psi(\xi_1', \xi_2', \xi_3)^2 \, d\xi_1' \, d\xi_2' \\
		&\qquad
			\ll \int_{U_0}
				\norm{f \circ \psi(\xi_1', \xi_2', \cdot)}_{L^2(0, a)}
				\norm{f \circ \psi(\xi_1', \xi_2', \cdot)}_{H^1(0, a)} \, d\xi_1' \, d\xi_2' \\
		&\qquad
		\le \pr{\int_{U_0} \norm{f \circ \psi(\xi_1', \xi_2', \cdot)}_{L^2(0, a)}^2 \, d\xi_1' \, d\xi_2'}^{\frac{1}{2}}
			\pr{\int_{U_0} \norm{f \circ \psi(\xi_1', \xi_2', \cdot)}_{H^1(0, a)}^2 \, d\xi_1' \, d\xi_2'}^{\frac{1}{2}} \\
		&\qquad
		= \pr{\int_U \abs{f \circ \psi(\blds{\xi})}^2 \, d \boldsymbol{\xi}}^{\frac{1}{2}}
			\pr{\int_U \brac{\abs{f \circ \psi(\blds{\xi})}^2
				+ \abs{\grad f(\psi(\blds{\xi}) \cdot \prt_3 \psi(\blds{\xi})}^2} \, d \boldsymbol{\xi}}^{\frac{1}{2}} \\
		&\qquad
		\ll_a \pr{\int_U \abs{f \circ \psi(\blds{\xi})}^2 \abs{J(\blds{\xi})} \, d \blds{\xi}}^{\frac{1}{2}}
			\pr{\int_U \brac{\abs{f \circ \psi(\blds{\xi}) }^2
				+ \abs{\grad f(\psi(\blds{\xi}))}^2} \abs{J(\blds{\xi})} \, d \blds{\xi}}^{\frac{1}{2}} \\
		&\qquad
		= \norm{f}_{L^2(V)} \norm{f}_{H^1(V)}.
\end{align*}
The second $\ll$ followed because the magnitude of the Jacobian determinant, $J$, is bounded away from zero and $\prt_3 \psi$ is bounded above. Because there are a finite number of charts on $\Gamma_a$, the bounds are uniform over $\Gamma_a$.

Similarly, for all multiindices, $\al = (\al_1, \al_2, 0)$ with $\abs{\al} \le m$, applying \refL{Agmon} with $k = 1$ gives
\begin{align*}
	&\norm{D^\al(f \circ \psi)(\cdot, \cdot, \xi_3)}_{L^2(U_0)}^2 \\
		&\qquad
		\ll_{m, a} \pr{\int_U \abs{D^\al(f \circ \psi)}^2}^{\frac{1}{2}}
			\pr{\int_U \brac{\abs{D^\al(f \circ \psi)}^2
				+ \abs{\prt_3 D^\al(f \circ \psi)}^2}}^{\frac{1}{2}} \\
		&\qquad
		= \pr{\int_V \abs{X^\al f}^2}^{\frac{1}{2}}
			\pr{\int_V \abs{X^\al f}^2
				+ \int_U \abs{D^\al \prt_3(f \circ \psi)}^2}^{\frac{1}{2}} \\
		&\qquad
		\ll \pr{\int_V \abs{X^\al f}^2}^{\frac{1}{2}}
			\pr{\int_V \abs{X^\al f}^2
				+ \int_V \abs{X^\al \grad f}^2}^{\frac{1}{2}} \\
		&\qquad
		= \norm{X^\al f}_{L^2(V)}\norm{X^\al f}_{H^1(V)}.
\end{align*}
But this is true for all $\abs{\al} \le m$, so
\begin{align*}
	\norm{f \circ \psi (\cdot, \cdot, \xi_3)}_{H^m(U_0)}
		\le
		\norm{f}_{H^m_{co}(V)}^{1/2}
		\brac{\norm{f}_{L^2(\Omega)} + \norm{\grad f}_{H^m_{co}(V)}}^{1/2},
\end{align*}
where we can use the conormal Sobolev space since the only derivative in the normal direction occurs in $\grad f$ itself. Also, because $\Gamma$ is $C^{m + 1}$, $\psi$ can be chosen to be $C^{m + 1}(\Gamma_a)$ and hence $f \circ \psi$ has sufficient smoothness.

Combining these bounds we have,
\begin{align*}
	\norm{f}_{L^\iny(V)}
		&\ll_{m, a}
			\norm{f}_{L^2(V)}^{\frac{1}{2} - \frac{1}{2m}}
			\norm{f}_{H^1(V)}^{\frac{1}{2} - \frac{1}{2m}}
			\norm{f}_{H^m_{co}(V)}^{\frac{1}{2m}}
			\brac{\norm{f}_{L^2(\Omega)} + \norm{\grad f}_{H^m_{co}(V)}}^{\frac{1}{2m}} \\
		&\le
			\norm{f}_{L^2(V)}^{\frac{1}{2} - \frac{1}{2m}}
			\norm{f}_{H^m_{co}(V)}^{\frac{1}{2m}}
			\brac{\norm{f}_{L^2(\Omega)} + \norm{\grad f}_{H^m_{co}(V)}}^{\frac{1}{2}}.
\end{align*}
Summing over all the $V$ gives
\begin{align*}
	\norm{f}_{L^\iny(\Gamma_a)}
		&\ll_{m, a}
			\norm{f}_{L^2(\Omega)}^{\frac{1}{2} - \frac{1}{2m}}
			\norm{f}_{H^m_{co}(\Omega)}^{\frac{1}{2m}}
			\brac{\norm{f}_{L^2(\Omega)} + \norm{\grad f}_{H^m_{co}(V)}}^{\frac{1}{2}}.
\end{align*}
With $W = \Omega \setminus \Gamma_a$, \refL{Agmon} gives
\begin{align*}
	\norm{f}_{L^\iny(W)}
			\ll_m \norm{f}_{L^2(W)}^{1 - \frac{3}{2m}} \norm{f}_{H^m(W)}^{\frac{3}{2m}}
			\ll_a \norm{f}_{L^2(\Omega)}^{1 - \frac{3}{2m}}
			\norm{f}_{H^m_{co}(\Omega)}^{\frac{3}{2m}}.
\end{align*}
Combining these last two inequalities completes the proof.
\end{proof}

\medskip

\begin{remark}\label{R:AgmonsChannel}
	It is easy to see that \refT{AnistropicAgmonsInequality} holds as well for a channel domain.
\end{remark}

\medskip

\begin{remark}\label{R:HigherDim}
When we apply \refT{AnistropicAgmonsInequality} in \refS{S-4} we have full control on the tangential derivatives but can control only one derivative in the normal direction. This made the proof of \refT{AnistropicAgmonsInequality} quite simple, as we could apply \refL{Agmon} in 2D to deal with both horizontal derivatives isotropically then use \refL{Agmon} in 1D to deal with the single normal derivative. Had we needed to deal with each variable anisotropically we would have applied \refL{Agmon} in 1D three successive times.
% A similar approach can lead ultimately to an extension to a bounded domain of Solonnikov's anistropic inequality, Theorem 4 of \cite{Sol1972}, which applies to all of $\R^d$ (where the lower-order derivatives need not appear). The approach we have taken is, however, more elementary and direct than that of \cite{Sol1972}.
\end{remark}

\medskip

It is worth comparing the inequality in \refT{AnistropicAgmonsInequality} with the 3D Agmon's anisotropic inequality in Proposition 2.2 of \cite{TemamZiane1996}, which can be written, for any $f$ in $H^2(\Omega)$, as
\begin{align}\label{e:TZineq}
	\begin{split}
	\norm{f}_{L^\iny(\Omega)}
		&\ll \norm{f}_{L^2(\Omega)}^{\frac{1}{4}}
			\prod_{j=1}^3 \pr{\norm{f}_{L^2(\Omega)} + \norm{\prt_j f}_{L^2(\Omega)}
				+ \norm{\prt_j \prt_j f}_{L^2(\Omega)}}^{\frac{1}{4}} \\
		&\ll \norm{f}_{L^2(\Omega)}^{\frac{1}{4}}
			\pr{\norm{f}_{L^2(\Omega)} + \norm{\grad f}_{L^2(\Omega)}
				+ \norm{\Delta f}}_{L^2(\Omega)}^{\frac{3}{4}}.
	\end{split}
\end{align}
In \cite{TemamZiane1996}, the authors have some control of the Laplacian but not (directly) of the full $H^2$ norm. This inequality would not work for us, however, as it includes $\prt_3^2 f$.

Both \refT{AnistropicAgmonsInequality} and the inequality in \refE{TZineq} are descendants in spirit of Solonnikov's Theorem 4 of \cite{Sol1972}. The proof in \refE{TZineq} uses, in part, Solonnikov's approach.
The approach we have taken is, however, more elementary and direct than that of \cite{Sol1972}.

Another type of anisotropic inequality that is not a descendant of Solonnikov's theorem (and is not of Agmon type) is the anisotropic embedding inequality of \cite{KTW2010} (Corollary 7.3), which originated in Remark 4.2 of \cite{TW1996OseenIUMJ}, which states that for all $f$ in $H_0^1(\Omega)$,
\begin{align*}
	\norm{f}_{L^\iny(\Omega)}
		\ll \norm{f}_{L^2(\Omega)}^{\frac{1}{2}} \norm{\prt_3 f}_{L^2(\Omega)}^{\frac{1}{2}}
			+ \norm{\prt_1 f}_{L^2(\Omega)}^{\frac{1}{2}}  \norm{\prt_3 f}_{L^2(\Omega)}^{\frac{1}{2}}
			+ \norm{f}_{L^2(\Omega)}^{\frac{1}{2}}  \norm{\prt_1 \prt_ 3 f}_{L^2(\Omega)}^{\frac{1}{2}}.
\end{align*}
%This inequality is useful when $f$ vanishes on the boundary and when the convergence rates of  $\norm{\prt_1 f}$, $\norm{\prt_3 f}$, and $\norm{\prt_1 \prt_3 f}$ are controllable, as they are in \cite{KTW2010, TW1996OseenIUMJ}.
Its proof, however, is entirely different from that of \refT{AnistropicAgmonsInequality} or the inequalities in \cite{Sol1972, TemamZiane1996} described above. (A 3D version of it can, however, be obtained using an argument somewhat along the lines of the proof of \refT{AnistropicAgmonsInequality}.)

%
% Section
%
\section{Special boundary conditions}\label{A:LionsLikeBCs}

For the Navier-Stokes equations in 2D, when $\al = \kappa$, the Navier boundary conditions reduce to the conditions, $u \cdot \blds{n} = \omega(u) = 0$ (\cite{CMR, FLP, KNavier}).\footnote{In these references, the relation is written $\al = 2 \kappa$, since $2 \Cal{S}(u)$ rather than $\Cal{S}(u)$ is used in the (2D version of) \refE{0-2}.}
Here, $\kappa$ is the curvature of the boundary of a planar bounded domain and $\omega(u)$ is the scalar curl of $u$. The natural extension of this observation to 3D is \refL{LionsLikeBCs}, which involves the shape operator,\footnote{When an inward unit normal convention is used, the expression for $\Cal{A}$ contains a negative sign.}
\begin{align*}
	\Cal{A} v := \pdx{\n}{v} = \grad_v \n,
\end{align*}
the directional derivative of $\blds{n}$ in the direction, $v$, for any vector, $v$, in the tangent plane.

\begin{lemma}\label{L:LionsLikeBCs}
	The boundary conditions in \refE{General_NBC} reduce to those in \refE{LionsLikeBCs} when
	$\Cal{A}$ is the shape operator.
\end{lemma}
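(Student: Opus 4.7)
The plan is to establish, for every smooth vector field $u$ on $\Omega$ satisfying $u\cdot\n = 0$ on $\Gamma$, the pointwise identity
\begin{equation*}
	(\curl u)\times \n \;=\; 2\brac{\mathbf{S}(u)\n}_{\tan} + 2\,\Cal{A}u \quad \text{on } \Gamma,
\end{equation*}
where $\Cal{A}v = \nabla_v\n$ is the shape operator. Once this is in hand, the equivalence of \refE{General_NBC} and \refE{LionsLikeBCs} is immediate: both sides of the identity vanish simultaneously, and the normal condition $u\cdot\n = 0$ is common to both.

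The first step is the elementary linear-algebraic identity
\begin{equation*}
	\pr{\nabla u - (\nabla u)^{\intercal}} w \;=\; (\curl u)\times w \qquad \forall\, w \in \R^3,
\end{equation*}
which is just the $\epsilon$-$\delta$ identity $\epsilon_{ijk}\epsilon_{klm} = \delta_{il}\delta_{jm} - \delta_{im}\delta_{jl}$ written out. Applied with $w = \n$ and combined with $2\mathbf{S}(u)\n = (\nabla u)\n + (\nabla u)^{\intercal}\n$, it rearranges to $2(\nabla u)\n = 2\mathbf{S}(u)\n + (\curl u)\times\n$. Since $(\curl u)\times\n$ is automatically perpendicular to $\n$, projecting onto the tangent plane gives the first piece,
\begin{equation*}
	2\brac{(\nabla u)\n}_{\tan} \;=\; 2\brac{\mathbf{S}(u)\n}_{\tan} + (\curl u)\times \n.
\end{equation*}

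The second step uses the tangency $u\cdot\n = 0$. For any tangent vector $v$ to $\Gamma$,
\begin{equation*}
	2\mathbf{S}(u)(\n,v) \;=\; \pr{(\nabla u)\n}\cdot v + \pr{(\nabla u)v}\cdot \n,
\end{equation*}
and the second summand is $(\partial_v u)\cdot\n$. Differentiating $u\cdot\n = 0$ in the tangential direction $v$ yields $(\partial_v u)\cdot\n = -u\cdot\nabla_v\n = -u\cdot\Cal{A}v$. The shape operator is the Hessian of the signed distance function restricted to the tangent plane and is therefore self-adjoint there, so $u\cdot\Cal{A}v = (\Cal{A}u)\cdot v$. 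Since $v$ is an arbitrary tangent vector, this produces the companion relation
\begin{equation*}
	\brac{(\nabla u)\n}_{\tan} \;=\; 2\brac{\mathbf{S}(u)\n}_{\tan} + \Cal{A}u.
\end{equation*}

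Multiplying this by $2$ and subtracting the first displayed relation eliminates $[(\nabla u)\n]_{\tan}$ and gives the sought identity, completing the equivalence. The proof is essentially computational; the only step that requires a geometric input rather than pure algebra is the second, and there the single observation needed is that the shape operator is self-adjoint on the tangent plane, a fact that follows either from the symmetry of the Hessian of the signed distance function or directly from the definition of the second fundamental form. I do not anticipate any real obstacle beyond keeping careful track of the sign conventions in the definition $\Cal{A}v = \nabla_v\n$ used in the paper.
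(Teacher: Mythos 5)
Your proof is correct and amounts to essentially the same argument as the paper's: the identity you establish, $(\curl u)\times\n = 2\brac{\mathbf{S}(u)\n}_{\text{tan}} + 2\Cal{A}u$ for $u$ tangent to $\Gamma$, is exactly the identity the paper quotes from \cite{daViegaCrispo2010} once the symmetry of the shape operator is used, and the equivalence of the two boundary conditions then follows just as in the paper. The only difference is that you derive that identity yourself (via the $\epsilon$--$\delta$ identity and tangential differentiation of $u\cdot\n=0$, with consistent conventions) rather than citing it, which makes the argument self-contained.
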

\begin{proof}
	Let $\Cal{A}$ be the shape operator and let $\blds{\tau}$ be any unit tangent vector.
	Then since the shape operator is symmetric, we can write \refE{General_NBC} as
	$\mathbf{S}(u^{\ep}) \n \cdot \blds{\tau}   + \mathcal{A} \blds{\tau} \cdot u^{\ep} = 0$.
	But as in \cite{daViegaCrispo2010},
	$
		2 \mathbf{S}(u^\eps)\n \cdot \blds{\tau}
			= (\curl u^\eps \times \n) \cdot \blds{\tau} - 2 u^\eps \cdot \pdx{\n}{\blds{\tau}},
	$
	so \refE{General_NBC} becomes
	\begin{align*}
		(\curl u^\eps \times \n) \cdot \blds{\tau}
			&= 2 \brac{\pdx{\n}{\blds{\tau}} - \mathcal{A} \blds{\tau}} \cdot u^\eps
			= 0.
	\end{align*}
	% (This identity is equivalent to Equation (2.2) of \cite{daViegaCrispo2011A}.)
\end{proof}

%
% Section
%
\section{Some lemmas}\label{S:SomeLemmas}

\noindent We assume that $\Omega$ is a bounded domain in $\R^3$ having a Lipschitz boundary, $\Gamma$.

\begin{lemma}\label{L:1}
Let $f$ be a divergence-free vector field in $H^2(\Omega)$ that satisfies
\begin{equation*}\label{e:7}
\mathbf{S}(f)  \n  = \Phi \textnormal{ on } \Gamma
\end{equation*}
in the sense of a trace, where $\Phi$ lies in $H^{3/2}(\Gamma)$. Then, for any vector field, $g$, in $H^2$, we have
\begin{equation*}\label{e:8}
-\int_{\Omega} \Delta f \cdot g \, d \blds{x} = 2 \int_{\Omega} \mathbf{S}(f) \cdot \mathbf{S}(g)\,  d \blds{x} -2 \int_\Gamma \Phi \cdot g \, d S,
\end{equation*}
where $A \cdot B = \sum_{1 \leq i, j \leq 3} a_{ij} b_{ij}$ for matrices $A = (a_{ij})_{1 \leq i,j \leq 3}$ and $B = (b_{ij})_{1 \leq i,j \leq 3}$.
\end{lemma}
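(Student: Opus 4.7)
The plan is to reduce to a straightforward integration by parts via the componentwise identity
\[
\Delta f = 2 \dv \mathbf{S}(f) - \grad(\dv f),
\]
which I would verify by direct computation:
\[
2 \sum_{j} \prt_j \mathbf{S}(f)_{ij}
    = \sum_j \prt_j \prt_j f_i + \sum_j \prt_i \prt_j f_j
    = \Delta f_i + \prt_i(\dv f).
\]
Under the hypothesis $\dv f = 0$, this collapses to $\Delta f = 2 \dv \mathbf{S}(f)$, so that $\Delta f$ is expressed in divergence form as the row-wise divergence of a symmetric matrix field.

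Next I would check that every term is well-defined at the stated regularity. Since $f \in H^2(\Omega)$, we have $\mathbf{S}(f) \in H^1(\Omega)^{3\times 3}$, $\dv \mathbf{S}(f) \in L^2(\Omega)$, and the trace $(\mathbf{S}(f)\n)|_\Gamma \in H^{1/2}(\Gamma)$; while $g \in H^2(\Omega)$ has trace in $H^{3/2}(\Gamma)$. The standard divergence theorem then yields
\[
-\int_\Omega \Delta f \cdot g \, d\blds{x}
    = -2 \int_\Omega (\dv \mathbf{S}(f)) \cdot g \, d\blds{x}
    = 2 \int_\Omega \mathbf{S}(f) \cdot \grad g \, d\blds{x} - 2 \int_\Gamma (\mathbf{S}(f)\n) \cdot g \, dS,
\]
where the first $\cdot$ on the right is the matrix contraction introduced in the statement.

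To finish, I would exploit the symmetry of $\mathbf{S}(f)$: for any symmetric matrix $A$ and any matrix $B$, $A \cdot B = A \cdot \tfrac{1}{2}(B + B^{\intercal})$, so $\mathbf{S}(f) \cdot \grad g = \mathbf{S}(f) \cdot \mathbf{S}(g)$. Substituting the boundary condition $\mathbf{S}(f)\n = \Phi$ into the trace term then delivers the claimed identity. There is no real obstacle here; the argument is essentially algebraic bookkeeping combined with one integration by parts, and the assumed $H^2$/$H^{3/2}$ regularity is more than enough to legitimize every step.
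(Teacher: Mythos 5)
Your proposal is correct and follows essentially the same route as the paper's own argument: both rest on the identity $\Delta f = 2\dv \mathbf{S}(f)$ for divergence-free $f$, a single integration by parts, and the symmetry of $\mathbf{S}(f)$ to convert $\mathbf{S}(f)\cdot\nabla g$ into $\mathbf{S}(f)\cdot\mathbf{S}(g)$, with the boundary term identified via $\mathbf{S}(f)\n = \Phi$. Your explicit check of the regularity needed for the trace and the divergence theorem is a welcome, if routine, addition.
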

\Ignore{ % Ignore the proof of this lemma for now
\begin{proof}
Using \refE{0-3}, we first observe that
\begin{equation}\label{e:9}\begin{array}{ll}
2 \nabla  \mathbf{S}(f)
                    & \spacer \hspace{-2mm} = \Big(\dfrac{\pa}{\pa x_1}, \dfrac{\pa}{\pa x_2}, \dfrac{\pa}{\pa x_3} \Big) \left( \begin{array}{ccc}
                                                                        \spacer 2 \dfrac{\pa f_1}{\pa x_1} & \dfrac{\pa f_2}{\pa x_1} + \dfrac{\pa f_1}{\pa x_2} &  \dfrac{\pa f_3}{\pa x_1} + \dfrac{\pa f_1}{\pa x_3}\\
                                                                        \spacer  \dfrac{\pa f_2}{\pa x_1} + \dfrac{\pa f_1}{\pa x_2}  & 2 \dfrac{\pa f_2}{\pa x_2}&  \dfrac{\pa f_3}{\pa x_2} + \dfrac{\pa f_2}{\pa x_3}\\
                                                                        \dfrac{\pa f_3}{\pa x_1} + \dfrac{\pa f_1}{\pa x_3}  &  \dfrac{\pa f_3}{\pa x_2} + \dfrac{\pa f_2}{\pa x_3} & 2\dfrac{\pa f_3}{\pa x_3}
                                                                        \end{array}\right)\\
                    & \spacer \hspace{-2mm} = (\text{using $\text{div }f = 0$})\\
                    & \hspace{-2mm} = \Delta f.
\end{array}
\end{equation}
Then, thanks to \refE{9}, we write
\begin{equation}\label{e:10}\begin{array}{ll}
\dis - \int_{\Omega} \Delta f \cdot g \, d\Omega
                    & \spacer \hspace{-2mm} = \dis -2 \int_{\Omega} \big( \nabla  \mathbf{S}(f) \big) \cdot g \, d\Omega\\
                    &  \spacer \hspace{-2mm} = \dis -2 \int_{\Omega} \left(\begin{array}{l}
                                                        \spacer 2 \dfrac{\pa^2 f_1}{\pa x_1^2} + \dfrac{\pa^2 f_2}{\pa x_2 \pa x_1} + \dfrac{\pa^2 f_1}{\pa x_2^2} + \dfrac{\pa^2 f_3}{\pa x_3 \pa x_1} + \dfrac{\pa^2 f_1}{\pa x_3^2}\\
                                                        \spacer \dfrac{\pa^2 f_2}{\pa x_1^2}+ \dfrac{\pa^2 f_1}{\pa x_1 \pa x_2} + 2 \dfrac{\pa^2 f_2}{\pa x_2^2} + \dfrac{\pa^2 f_3}{\pa x_3 \pa x_2} +\dfrac{\pa^2 f_2}{\pa x_3^2}  \\
                                                        \dfrac{\pa^2 f_3}{\pa x_1^2} + \dfrac{\pa^2 f_1}{\pa x_1 \pa x_3} + \dfrac{\pa^2 f_3}{\pa x_2^2} + \dfrac{\pa^2 f_2}{\pa x_2 \pa x_3} + 2 \dfrac{\pa^2 f_3}{\pa x_3^2}
                                                        \end{array}\right)^{\intercal}\cdot g \, d\Omega\\
                    & \spacer \hspace{-2mm} = (\text{using the integration by parts})\\
                    & \hspace{-2mm} = \dis 2 \int_{\Omega} \mathbf{S}(f) \cdot \mathbf{S}(g) \, d\Omega - 2 \int_{0} \big( \mathbf{S}(f)  \n \big) \cdot g \, d S.
\end{array}
\end{equation}
From \refEAnd{7}{10}, \refE{8} follows.
\end{proof}
} % End Ignore proof of lemma

\smallskip

We recall the following classical lemma:
\begin{lemma}\label{L:trace-like}
Let $u$ be a divergence-free vector field, of class $H^1(\Omega)^3$, in a bounded domain, $\Omega \subset \mathbb{R}^3$, with a $C^2$-boundary, $\Gamma$. Then, if the normal component of $f$ vanishes on $\Gamma$, we have
\begin{equation*}\label{e:trace-like}
|u|_{L^2(\Gamma)} \leq \kap_{\Omega} |u|^{\frac{1}{2}}_{L^2(\Omega)}|\nabla u|^{\frac{1}{2}}_{L^2(\Omega)},
\end{equation*}
for a constant $\kap_{\Omega}$ depending on the domain.
\end{lemma}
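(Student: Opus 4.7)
The plan is to reduce the inequality to a divergence-theorem identity combined with a classical interpolation estimate. First, I would construct a $C^1$ extension $\mathbf{N}$ of the outward unit normal $\n$ to all of $\ol{\Omega}$, cut off so that $\mathbf{N}$ is supported in a thin tubular neighborhood of $\Gamma$; such an extension exists because $\Gamma$ is $C^2$. Since $\mathbf{N}\cdot\n = 1$ on $\Gamma$, applying the divergence theorem to $|u|^2 \mathbf{N}$ gives
\begin{equation*}
	\int_\Gamma |u|^2 \, dS
		= 2 \int_\Omega u \cdot \pr{(\mathbf{N} \cdot \nabla) u} \, d\blds{x}
			+ \int_\Omega |u|^2 \dv \mathbf{N} \, d\blds{x}.
\end{equation*}
Cauchy-Schwarz bounds the first integral by $2 \|\mathbf{N}\|_{L^\iny} \|u\|_{L^2(\Omega)} \|\nabla u\|_{L^2(\Omega)}$, which already has the desired geometric-mean form after taking square roots.

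The second integral is the source of the main difficulty, since $|\dv\mathbf{N}|$ is bounded but not zero in general, so that integral a priori contributes a lower-order piece of order $\|u\|_{L^2(\Omega)}^2$. This is precisely where the structural hypotheses $\dv u = 0$ and $u\cdot\n = 0$ on $\Gamma$ must be used. One way is to reprocess that term by rewriting $|u|^2 \dv \mathbf{N}$ using the identity $\dv(|u|^2 \mathbf{N}) = 2u\cdot(\mathbf{N}\cdot\nabla)u + |u|^2 \dv\mathbf{N}$ inside a cutoff $\eta$ supported in a tubular neighborhood $\Gamma_\delta$ of width $\delta$, and exploiting the fact that, by the fundamental theorem of calculus along normal rays and the divergence-free/no-flux conditions, $\|u\|_{L^2(\Gamma_\delta)}^2$ can itself be controlled by $\|u\|_{L^2(\Omega)}\|\nabla u\|_{L^2(\Omega)}$ up to a factor depending on $\delta$, after which one optimizes in $\delta$.

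An equivalent but slicker route is to invoke directly the classical interpolation trace inequality
\begin{equation*}
	\|u\|_{L^2(\Gamma)}
		\le C \|u\|_{L^2(\Omega)}^{\frac{1}{2}} \|u\|_{H^1(\Omega)}^{\frac{1}{2}},
\end{equation*}
valid for every $u \in H^1(\Omega)$ by the continuity of the trace $H^{1/2}(\Omega) \to L^2(\Gamma)$ together with the Gagliardo-Nirenberg interpolation $\|u\|_{H^{1/2}} \le C \|u\|_{L^2}^{1/2}\|u\|_{H^1}^{1/2}$, and then replace $\|u\|_{H^1}$ by $\|\nabla u\|_{L^2}$ on the right by using a Poincaré-type estimate tailored to the class of divergence-free fields with vanishing normal trace. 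The main obstacle I expect is this very last step of trading $\|u\|_{H^1(\Omega)}$ for $\|\nabla u\|_{L^2(\Omega)}$: it is precisely where the hypotheses $\dv u = 0$ and $u\cdot\n = 0$ enter nontrivially, since without them the inequality would have to carry an additive $\|u\|_{L^2(\Omega)}$ term on the right. Once that absorption is justified, the stated bound follows by squaring, applying the identity above, and taking square roots.
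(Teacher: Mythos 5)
The paper itself offers no proof of this lemma --- it is relegated to the appendix of ``standard lemmas which we state without proof'' --- so there is no argument of the authors' to compare yours against; your plan must stand on its own. Your second route is essentially the standard proof and does work, but as written both routes stop short of the one point that carries the content of the lemma. In route (a), the claim that the fundamental theorem of calculus along normal rays, together with $\operatorname{div} u = 0$ and $u\cdot\boldsymbol{n}=0$, controls $\|u\|_{L^2(\Gamma_\delta)}^2$ by $\|u\|_{L^2(\Omega)}\|\nabla u\|_{L^2(\Omega)}$ is not justified: integrating $\partial_{\xi_3}$ inward from $\Gamma$ only helps for the normal component, which is the one that vanishes on $\Gamma$; for the tangential components the ray integration reintroduces the unknown boundary trace, so the argument is circular, and the term you must absorb returns with an order-one coefficient that no choice of the layer width $\delta$ makes small.

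In route (b) everything reduces, as you say, to the Poincar\'e-type inequality $\|u\|_{L^2(\Omega)} \le C\|\nabla u\|_{L^2(\Omega)}$ on the class at hand, but you leave that step as an expectation (``once that absorption is justified'') rather than proving it; since it is exactly where the hypotheses enter, it should be supplied, and it is short. If it failed, there would be $u_k$ with $\|u_k\|_{L^2(\Omega)}=1$ and $\|\nabla u_k\|_{L^2(\Omega)}\to 0$; by Rellich a subsequence converges strongly in $L^2$ and weakly in $H^1$ to a constant field $c$ with $\|c\|_{L^2(\Omega)}=1$ whose normal trace still vanishes on $\Gamma$, and a nonzero constant vector cannot be tangent to all of the boundary of a bounded domain (at a point of $\Gamma$ maximizing $x\mapsto c\cdot x$ the outward normal is a positive multiple of $c$, so $c\cdot\boldsymbol{n}=|c|>0$). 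Note that only $u\cdot\boldsymbol{n}=0$ is used here; the divergence-free hypothesis plays no essential role in the lemma. With this Poincar\'e inequality in hand, either your divergence-theorem identity (absorbing $\int_\Omega |u|^2\,\operatorname{div}\mathbf{N}\,d\boldsymbol{x} \le C\|u\|_{L^2(\Omega)}^2 \le C\|u\|_{L^2(\Omega)}\|\nabla u\|_{L^2(\Omega)}$) or the trace--interpolation inequality $\|u\|_{L^2(\Gamma)} \le C\|u\|_{L^2(\Omega)}^{1/2}\|u\|_{H^1(\Omega)}^{1/2}$ finishes the proof exactly as you indicate.
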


\section*{Acknowledgements}

% \vspace{5.5pt}\textbf{Acknowledgements.}
The authors were supported in part by NSF Grants DMS-0842408 and DMS-1009545.
% , and in part by NSF Grant DMS-???????.
The authors would like to thank Frederick Wilhelm for helpful discussions on the geometry of surfaces, and Drago{\c{s}} Iftimie for pointing out the issue raised in \refR{LipschitzInterpolation}.

\bibliography{Refs}
\bibliographystyle{plain}

\end{document}